\documentclass{amsart}

\usepackage[all]{xy}

\usepackage{hyperref}

\usepackage{cleveref}

\usepackage{amssymb}
\usepackage{todonotes}
\usepackage[T1]{fontenc}
\theoremstyle{plain}
\newtheorem{theorem}[subsection]{Theorem}
\newtheorem{proposition}[subsection]{Proposition}
\newtheorem{lemma}[subsection]{Lemma}
\newtheorem{corollary}[subsection]{Corollary}
\theoremstyle{definition}
\newtheorem{definition}[subsection]{Definition}
\newtheorem{example}[subsection]{Example}

\newcommand{\Bun}{\mathrm{Bun}}
\newcommand{\crys}{\mathrm{crys}}
\newcommand{\dR}{\mathrm{dR}}
\newcommand{\et}{\mathrm{{\acute{e}t}}}
\newcommand{\F}{\mathbb{F}}

\newcommand{\Gm}{{\mathbb{G}_m}}

\renewcommand{\inf}{\mathrm{inf}}

\newcommand{\N}{\mathbb{N}}

\newcommand{\Sets}{\mathrm{Sets}}
\newcommand{\Spa}{\mathrm{Spa}}
\newcommand{\Spd}{\mathrm{Spd}}
\newcommand{\Spec}{\mathrm{Spec}}

\newcommand{\Z}{\mathbb{Z}}
\newcommand{\R}{\mathbb{R}}

\begin{document}

\title{Extending torsors on the punctured $\Spec(A_\inf)$}
\author{Johannes Ansch\"{u}tz}
\email{ja@math.uni-bonn.de}
\date{\today}

\begin{abstract}
  We prove that torsors under parahoric group schemes on the punctured spectrum of Fontaine's ring $A_\inf$, extend to the whole spectrum. Using descent we can extend a similar result for the ring $\mathfrak{S}$ of Kisin and Pappas to full generality. Moreover, we treat similarly the case of equal characteristic. As applications  we extend results of Ivanov on exactness of the loop functor and present the construction of a canonical specialization map from the $B^+_\dR$-affine Grassmannian to the Witt vector affine flag variety. 
\end{abstract}

\maketitle

\section{Introduction}
\label{section_introduction}

Let $E$ be a complete discretely valued field with ring of integers $\mathcal{O}_E$, and perfect residue field $k$ of characteristic $p$.
Set
\[
  A_E:=W(\mathcal{O}_C)\hat{\otimes}_{W(k)}\mathcal{O}_E
\]
for  $C$ some perfect non-archimedean field $C$ with ring of integers $\mathcal{O}_C$ such that $k\subseteq \mathcal{O}_C$. Here, $W(-)$ denotes the functor of ($p$-typical) Witt vectors.
If $E$ is of mixed characteristic, then $A_E$ is the period ring $A_\inf$ (associated with $C$ and $E$) which was considered by Fontaine. If $E\cong k[[\pi]]$ is of equal characteristic, then $A_E=W(\mathcal{O}_C)\hat{\otimes}_{W(k)}\mathcal{O}_E\cong \mathcal{O}_C[[\pi]]$ is a ring of power series over $\mathcal{O}_C$. 

Let $s\in \Spec(A_E)$ be the unique closed point and set
\[
  U_E:=\Spec(A_E)\setminus\{s\}.
\]
This ring $A_E$ has the decisive property that the restriction functor
\[
  \mathrm{Bun}(\Spec(A_E))\to \mathrm{Bun}(U_E)
\]
of vector bundles is an equivalence, cf.\ \Cref{lemma-extending-vector-bundles}.
Let $\mathcal{G}$ be a parahoric group scheme over $\mathcal{O}_E$, cf.\ \cite{bruhat_tits_groupes_reductif_sur_un_corps_local_II_schemas_en_groupes}, \cite{Landvogt1996}, \cite{tits_reductive_groups_over_local_fields}.
Our main theorem is the following generalization.

\begin{theorem}[cf.\ \Cref{sec:extend-tors-mathrmsp-1-main-theorem-a-e}]
  \label{theorem_introduction_main_theorem_a}
  All $\mathcal{G}$-torsors on $U_E$ extend to $\Spec(A_E)$.
\end{theorem}

If $C$ is algebraically closed, the extension is necessary a trivial $\mathcal{G}$-torsor because then $A_E$ is strictly henselian. In this case, \Cref{theorem_introduction_main_theorem_a} is equivalent to the statement that each $\mathcal{G}$-torsor on $U_E$ is trivial.

Let
\[
  R_E:=\mathcal{O}_E[[z]]
\]
be the formal power series ring over $\mathcal{O}_E$. Again vector bundles on its punctured spectrum extend to the whole spectrum. From \Cref{theorem_introduction_main_theorem_a} we can draw the following corollary by descent, cf.\ \Cref{proposition_criterion_for_extending_noetherian_case}.

\begin{corollary}[cf.\ \Cref{proposition_criterion_for_extending_noetherian_case}]
  \label{sec:introduction-corollary-main-theorem}
  All $\mathcal{G}$-torsors on the punctured spectrum of $\Spec(R_E)$ extend to $\Spec(R_E)$.
\end{corollary}

If $E$ has mixed characteristic \Cref{sec:introduction-corollary-main-theorem} has been treated in \cite{kisin_pappas_integral_models_of_shimura_varieties_with_parahoric_level_structure}, at least if the generic fiber $G:=\mathcal{G}\otimes_{\mathcal{O}_E}E$ splits over some tamely ramified extension of $E$, and is applied there to the construction of integral models of Shimura varieties.
If $E$ has equal characteristic both statements are used in \cite{hartl_viehmann_the_generic_fiber_of_moduli_spaces_of_bounded_local_g_shtukas} and work of Paul Breutmann, cf.\ \cite{breutmann2019functoriality}.
If $E$ has mixed characteristic \Cref{theorem_introduction_main_theorem_a} has applications to mixed characteristic affine Grassmannians and moduli spaces of mixed characteristic local shtukas (cf.\ \Cref{section_specialization_map}, \cite[Section 21.2.]{scholze2020berkeley}, \cite{gleason2020specialization}). These applications were our main motivation for considering the question of extending $\mathcal{G}$-torsors.

As the question is rather specific (and technical), let us give a more detailed motivation for considering it. The first motivation comes from the construction of integral models of Shimura varieties with parahoric level structure (cf.\ \cite{kisin_pappas_integral_models_of_shimura_varieties_with_parahoric_level_structure}, \cite{kisin_integral_models_for_shimura_varieties_of_abelian_type}, \cite{kisin_integral_canonical_models_of_shimura_varieties_summary}). A key step (cf.\ \cite[Key lemma 3.2.1.]{kisin_integral_canonical_models_of_shimura_varieties_summary}, \cite[Introduction]{kisin_pappas_integral_models_of_shimura_varieties_with_parahoric_level_structure}) is to prove (as the authors of \cite{kisin_pappas_integral_models_of_shimura_varieties_with_parahoric_level_structure} phrase it) ``that the crystalline realizations of certain Hodge cycles have good $p$-adic integrality properties''. For example, even in the case the level is hyperspecial, it is by no means clear that the crystalline realizations (obtained via de Rham/Betti/\'etale comparsion theorems and the theory of Breuil-Kisin modules) of the Hodge cycles defining the Shimura varieties define a \textit{parahoric} group scheme. But this necessary result is implied if torsors under parahoric group schemes over the punctured spectrum of $R_E$ extend with $\mathcal{O}_E$ in mixed characteristic, cf.\ \cite[Corollary 3.3.6.]{kisin_pappas_integral_models_of_shimura_varieties_with_parahoric_level_structure}.
Our main results will generalize the crucial \cite[Proposition 1.4.3]{kisin_pappas_integral_models_of_shimura_varieties_with_parahoric_level_structure} and it is therefore reasonable to expect that our results will have some applications to integral models of Shimura varieties (e.g.\ to integral models of unitary Shimura varieties in residue characteristic $2$).
Another line of motivation comes from the study of mixed-characteristic affine Grassmannians or affine flag varieties \cite{scholze2020berkeley}, especially \cite[Theorem 21.2.1]{scholze2020berkeley}. Here, the proof of ind-properness rests (in a subtle) way on \Cref{theorem_introduction_main_theorem_a}. In a similar vein, \Cref{theorem_introduction_main_theorem_a} is applied in the calculation of connected components of moduli spaces of mixed characteristic local shtukas, \cite{gleason2020specialization}.

Let us describe our strategy for proving \Cref{theorem_introduction_main_theorem_a}, and where it departs from the arguments of \cite[Proposition 1.4.3.]{kisin_pappas_integral_models_of_shimura_varieties_with_parahoric_level_structure}. Using descent we may enlarge the field $C$ as we like and assume that it is maximally complete, cf.\ \Cref{lemma_descent_along_extension_of_c}. Let $\pi$ be a uniformizer in $\mathcal{O}_E$. We prove that \Cref{theorem_introduction_main_theorem_a} is equivalent to the statement that each $G$-torsor on
\[
  V:=U[1/\pi]=\Spec(A_E[1/\pi])
\]
is trivial. This uses crucially that $\mathcal{G}$ is parahoric, cf.\ \Cref{coro:criterion-extending-torsors-parahoric}. The analogous reduction is not possible for the ring $R_E$ because we are applying Steinberg's theorem to the field
\[
  (A_{E,(\pi)})^\wedge_\pi[1/\pi],
\]
a finite extension of $W(C)[1/p]$ resp.\ $C((\pi))$, which is of cohomological dimension $1$. Using a result of Gabber on Brauer groups on affine schemes, we prove then the case of tori, cf.\ \Cref{proposition_cohomology_of_tori}. This allows us to assume that $G$ is quasi-split, semisimple, simply connected, and to reduce to proving that the map
\[
  H^1(V,G)\to H^1(U_{\crys},G)
\] has trivial kernel, where
\[
  U_{\crys}\subseteq \Spec(A_E)
\] is the complicated ``crystalline'' part, cf.\ \Cref{lemma_spectrum_of_a_inf}. We may even assume that $G$ has no factors of type $E_8$ because quasi-split groups of type $E_8$ are split, and the split case is easy, cf.\ \Cref{corollary_h1_trivial_for_split_groups}. We then introduce (following a suggestion of Scholze) a ``decompletion'' $\tilde{A}_E$ of $A_E$ using Mal'cev-Neumann series over $\mathcal{O}_E$, cf.\ \Cref{sec:decompleting-a_e}, which is roughly a power series ring over $\mathcal{O}_E$ with exponents in $\R$. The ring $\tilde{A}_E$ is henselian along $\pi$ and thus by a result of Gabber/Ramero $G$-torsors on $V$ can be ``decompleted'' to $G$-torsors on
\[
  \tilde{V}:=\Spec(\tilde{A}_E[1/\pi]).
\]
Crucially, the ``crystalline part'' of $\tilde{A}_E[1/\pi]$ is simpler, namely a valuation ring of rank $1$, cf.\ \Cref{sec:decompleting-a_e-4-crystalline-part-of-tilde-a-e}. Using that reductions to Borels extend along valuation rings, we reduce to checking triviality at the generic point of $\Spec(\tilde{A}_E[1/\pi])$. For this, we use Gille's result on Serre's conjecture II, cf.\ \cite{gille_cohomologie_galoisienne_des_groupes_quasi_deployes_sur_des_corps_de_dimension_cohomologique_2}, i.e., that torsors under quasi-split, semisimple, simply connected groups without $E_8$-factors over fields of dimension $\leq 2$ are trivial. The assumptions on $G$ are met by our previous reduction, and thus we have to show that the fraction field $\tilde{K}_E$ of $\tilde{A}_E$ is of cohomological dimension $\leq 2$ (plus an additional bound on its Kato cohomology in equal characteristic, cf.\ \Cref{sec:decompleting-a_e-1-kato-cohomologie-of-tilde-k-e}). We do this in \Cref{sec:decompleting-a_e-1-cohomological-dimension-of-tilde-a-e} by using perfectoid methods and cohomological bounds on the $\F_p$-cohomology of perfectoid rings, cf.\ \cite[Theorem 11.1.]{bhatt_scholze_prisms_and_prismatic_cohomology}, and \'etale cohomology of diamonds, cf.\ \cite[Proposition 21.11.]{scholze_etale_cohomology_of_diamonds}. Gille's result is also used in the proof of \cite[Proposition 1.4.3.]{kisin_pappas_integral_models_of_shimura_varieties_with_parahoric_level_structure}, and the needed cohomological bounds are easier to prove there. Our proof is simpler in the respect that we can avoid subtleties with parahoric group schemes. We want to mention that Jo\~{a}o Louren\c{c}o has informed us that he could improve the needed results on parahoric group schemes and thus obtain \Cref{sec:introduction-corollary-main-theorem} via the strategy of \cite[Proposition 1.4.3.]{kisin_pappas_integral_models_of_shimura_varieties_with_parahoric_level_structure}, cf.\ remark after \cite[Lemme 2.6]{lourencco2019grassmanniennes}.

We give two applications of our main result. First, we prove that for every perfect $k$-algebra $R$ each $G$-torsors on
\[
  A_E(R)[1/\pi]:=W(R)\hat{\otimes}_{W(k)} E
\]
can be trivialized $v$-locally, cf.\ \Cref{sec:v-stack-g-2-torsors-trivial-in-v-topology}. This uses results of Ivanov, cf.\ \cite{ivanov2020ind}, on arc-descent for finite projective $A_E(R)[1/\pi]$-modules, and allows us to extend (a reformulation of) \Cref{theorem_introduction_main_theorem_a} to open and bounded valuation rings $C^+\subseteq C$, cf.\ \Cref{sec:v-local-triviality-complete-valuation-ring}. 
As a second application of our result we present in \Cref{section_specialization_map} the construction of a specialization map between mixed characteristic affine Grassmannianns. 

\subsection*{Acknowledgements} We thank Peter Scholze heartily for his interest and suggestions regarding this topic, especially for his suggestion to consider Mal'cev-Neumann series. Moreover, we heartily thank K\k{e}stutis \v{C}esnavi\v{c}ius, Ian Gleason, Daniel Kirch, Jo\~{a}o Louren\c{c}o and Sebastian Posur for discussions and comments on this paper.

\section{Notations}
\label{sec:notations}

We fix some notation used throughout the paper.
Let $E$ be a complete discretely valued field with ring of integers $\mathcal{O}_E$, perfect residue field $k$ of characteristic $p>0$ and let $\pi\in \mathcal{O}_E$ be a uniformizer. 
We denote by $\mathcal{G}$ a parahoric group scheme over $\mathcal{O}_E$ (cf.\ \cite{bruhat_tits_groupes_reductif_sur_un_corps_local_II_schemas_en_groupes}, \cite{tits_reductive_groups_over_local_fields}, \cite{landvogt_a_compactification_of_the_bruhat_tits_building}) and set 
$$
G:=\mathcal{G}\otimes_{\mathcal{O}_E}E
$$ 
for its (connected) reductive fiber over $E$.
Let $C$ be a perfect, complete non-archimedean extension such that $k\subseteq \mathcal{O}_C$ where $\mathcal{O}_{C}$ is the ring of integers of $C$. Let $\mathfrak{m}_{C}\subseteq \mathcal{O}_{C}$ be the maximal ideal of $\mathcal{O}_C$ and let $k^\prime:=\mathcal{O}_{C}/\mathfrak{m}_{C}$ be the residue field of $\mathcal{O}_{C}$. For simplicity we will mostly (without loosing generality) assume that $k\cong k^\prime$.
For a perfect ring $S$ we denote its ring of ($p$-typical) Witt vectors by $W(S)$.
Set
$$
A_{E}:=W(\mathcal{O}_{C})\hat{\otimes}_{W(k)}\mathcal{O}_E.
$$
Then
$$
A_E\cong 
\begin{cases}
  \mathcal{O}_C[[\pi]] & \textrm{ if } \mathrm{char}(E)>0 \\
  A_{\inf}\otimes_{W(k)}\mathcal{O}_E & \textrm{ if } \mathrm{char}(E)=0
\end{cases}
$$
with $A_{\inf}=W(\mathcal{O}_C)$ Fontaine's ring associated with $C$. We may occasionally drop the $E$ from the notation.
Let
$$
[\cdot]\colon \mathcal{O}_C\to A_E
$$
be the Teichm\"uller lift. Then every element $a\in A_E$ can be uniquely written as
$$
a=\sum\limits_{i=0}^\infty [a_i]\pi^i
$$
with $a_i\in \mathcal{O}_C$.
Let
$$
s:=s_{A_E}\in \Spec(A_E)
$$
be the unique closed point given by the unique maximal ideal
$$
\mathfrak{m}:=\{\ \sum\limits_{n\geq 0}^\infty [a_n]\pi^n\ |\ a_0\in \mathfrak{m}_{C}\}
$$
of $A_E$.
Let
$$
U:=U_{E}:=\Spec(A_E)\setminus\{s\}
$$
be the punctured spectrum of $A_E$. Moreover, set
$$
V:=V_{E}:=\Spec(A_E[1/\pi])\subseteq U.
$$
Finally, define the ``crystalline point''
$$
\mathfrak{p}_{\crys}:=\{\sum\limits_{n\geq 0}^\infty [a_n]\pi^n\ |\ a_n\in \mathfrak{m}_{C} \textrm{ for all }n\}
$$
and the ``crystalline part''
$$
U_\crys:=\Spec(A_{E,\mathfrak{p}_\crys})\subseteq U.
$$
For a finite field extension $E^\prime/E$ we denote by $\mathcal{O}_{E^\prime}$ its ring of integers. Note that
$$
A_{E^\prime}\cong A_{E}\otimes_{\mathcal{O}_E}\mathcal{O}_{E^\prime}
$$
and
$$
U_{E^\prime}=U_{E}\otimes_{\mathcal{O}_E}\mathcal{O}_{E^\prime}, V_{E^\prime}=V_{E}\otimes_E E^\prime, \textrm{ etc.}
$$

If not stated explicitly otherwise, $H^\ast$ will always mean \'etale cohomology. 

\section{The spectrum of $A_E$}
\label{section_spectrum_of_a}

We use the notation from \Cref{sec:notations}.
Moreover, we assume that $C$ is algebraically closed.
Recall that an element
$$
\xi\in A_{E}
$$
is called distinguished (or primitive) of degree $1$ (cf.\ \cite[D\'efinition 2.2.1.]{fargues_fontaine_courbes_et_fibres_vectoriels_en_theorie_de_hodge_p_adique}) if
$$
\xi=u(\pi-[\varpi])
$$
for some unit $u\in A_{E}^\times$ and some $\varpi\in \mathfrak{m}_C$. 

For $E$ of equal characteristic the next lemma can be found in \cite[Lemma 8.3]{hartl_viehmann_the_generic_fiber_of_moduli_spaces_of_bounded_local_g_shtukas} as well.

\begin{lemma}
\label{lemma_spectrum_of_a_inf}
The spectrum $\Spec(A_{E})$ of $A_{E}$ is given as
$$
\Spec(A_{E})=U_\crys\cup \{\mathfrak{m}\} \cup \bigcup\limits_{\xi\in {A_E} \atop \textrm{distinguished of degree } 1 } \{(\xi)\}.
$$
\end{lemma}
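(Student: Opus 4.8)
The plan is to classify all prime ideals $\mathfrak{p} \subseteq A_E$ according to how they interact with $\pi$ and with the ideal $\mathfrak{p}_\crys$. First I would observe that since $A_E$ is $\pi$-adically complete with $A_E/\pi \cong \mathcal{O}_C$, and $\mathcal{O}_C$ is a valuation ring (with $C$ algebraically closed), every prime of $A_E$ either contains $\pi$ or it does not. If $\pi \in \mathfrak{p}$, then $\mathfrak{p}$ corresponds to a prime of $\mathcal{O}_C$; the primes of a valuation ring form a chain, and here—since $C$ is algebraically closed with perfect residue field $k'$—the relevant ones are $(0)$ (giving $(\pi)$, which is distinguished of degree $1$ with $\varpi = 0$) and $\mathfrak{m}_C$ (giving $\mathfrak{m}$), together with the intermediate primes of $\mathcal{O}_C$, all of which are contained in $\mathfrak{p}_\crys$ and hence lie in $U_\crys$. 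So the case $\pi \in \mathfrak{p}$ is handled by the structure of $\Spec(\mathcal{O}_C)$.

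The substantive case is $\pi \notin \mathfrak{p}$, i.e.\ primes meeting the multiplicative set $\{\pi^n\}$ trivially, equivalently primes of $A_E[1/\pi] = V$. Here I would argue that any such $\mathfrak{p}$, if it is not contained in $\mathfrak{p}_\crys$ (i.e.\ not in $U_\crys$) and is not the zero ideal, must contain a distinguished element of degree $1$. The key input is the Weierstrass-type preparation available for $A_\inf$: given a nonzero $f = \sum [a_i]\pi^i \in \mathfrak{p}$ with not all $a_i \in \mathfrak{m}_C$, one can factor $f$ (after inverting $\pi$, or up to a unit) as a product of distinguished elements times a unit, using that $\mathcal{O}_C$ is a valuation ring to control the Newton polygon / the valuations of the $a_i$. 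Then $\mathfrak{p}$, being prime, must contain one of the distinguished factors $\xi$; and one checks $(\xi)$ is already maximal among primes not containing $\pi$ (the quotient $A_E/\xi$ is $\mathcal{O}_C$-finite, indeed $\cong \mathcal{O}_C$ after the untilt, whose spectrum is just a chain from $(0)$ to $\mathfrak{m}_C$, so $(\xi)[1/\pi]$ is a maximal ideal of $V$). Hence $\mathfrak{p} = (\xi)$.

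Conversely I must check the three listed families actually land in $\Spec(A_E)$ correctly and are disjoint: each $(\xi)$ is prime (as $A_E/\xi$ is a domain, being an untilt ring of $C$, which is a field—wait, it is $\mathcal{O}_{C^\sharp}$, a valuation ring, hence a domain), $\mathfrak{m}$ is the unique maximal ideal, and $U_\crys = \Spec(A_{E,\mathfrak{p}_\crys})$ consists exactly of the primes contained in $\mathfrak{p}_\crys$. Finally, to get the stated union I should verify that every prime contained in $\mathfrak{p}_\crys$ other than those captured above is genuinely in $U_\crys$, and that a prime can lie in at most one family—e.g.\ $(\xi) \not\subseteq \mathfrak{p}_\crys$ since $\xi$ has a unit leading term after dividing by its content, so $\xi \notin \mathfrak{p}_\crys$.

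\emph{Main obstacle.} The delicate point is the Weierstrass factorization step: showing that any $f \in A_E$ which is nonzero mod $\pi^\infty$ and not in $\mathfrak{p}_\crys$ admits, up to a unit, a factorization into distinguished degree-$1$ elements, and that a prime not contained in $\mathfrak{p}_\crys$ and not containing $\pi$ therefore contains some such $\xi$ and is in fact equal to $(\xi)$. This uses in an essential way that $C$ is algebraically closed (so the Newton polygon slopes are attained by actual elements of $\mathfrak{m}_C$) and that $\mathcal{O}_C$ is a valuation ring; I would either cite the preparation results in \cite{fargues_fontaine_courbes_et_fibres_vectoriels_en_theorie_de_hodge_p_adique} or reprove the needed case directly, since here only degree-$1$ factors and the prime-ideal consequence are needed rather than a full factorization theorem.
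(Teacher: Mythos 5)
Your proposal is correct and follows essentially the same route as the paper's proof: reduce to the factorization of a primitive element into distinguished degree-$1$ factors (Fargues--Fontaine/Lazard, using that $C$ is algebraically closed), conclude that a prime not containing $\pi$ and not contained in $\mathfrak{p}_{\crys}$ must contain some $(\xi)$, and use $A_E/(\xi)\cong\mathcal{O}_{C^\sharp}$ having exactly two primes to pin down $\mathfrak{p}=(\xi)$ or $\mathfrak{m}$. One harmless slip in your case $\pi\in\mathfrak{p}$: since $C$ carries a rank-one valuation, $\mathcal{O}_C$ has no intermediate primes, and had there been any, the corresponding primes of $A_E$ would contain $\pi$ and so could \emph{not} lie in $U_{\crys}$ (because $\pi\notin\mathfrak{p}_{\crys}$) --- the claim you make about them is vacuously irrelevant rather than true.
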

\begin{proof}
Let $\mathfrak{p}\subseteq A_E$ be an arbitrary prime ideal. If $\mathfrak{p}$ contains a distinguished element $\xi$ of degree $1$ (or equivalently some power), then $\mathfrak{p}$ lies in the subset
$$
\Spec(A_E/(\xi))\subseteq \Spec(A_E).
$$
But $(\xi)$ being distinguished of degree $1$ implies that $A_E/(\xi)$ is isomorphic to the ring of integers $\mathcal{O}_{C^\#}$ for some non-archimedean field $C^\#$, possibly $C^\#\cong C$ (cf.\ \cite[Corollaire 2.2.23]{fargues_fontaine_courbes_et_fibres_vectoriels_en_theorie_de_hodge_p_adique}). The ring $\mathcal{O}_{C^\#}$ contains exactly two prime ideals, namely $(0)$ and $\mathfrak{m}/(\xi)$. In particular, $\mathfrak{p}=(\xi)$ or $\mathfrak{p}=\mathfrak{m}$. Now assume that $\mathfrak{p}$ does not contain a distinguished element $\xi\in A_E$. We want to prove that 
$$
\mathfrak{p}\subseteq \mathfrak{p}_{\mathrm{cris}}=\{\sum\limits_{n\geq 0}[x_i]\pi^i\ |\ x_i\in \mathfrak{m}_{C}\ \}.
$$ Assume the contrary. Then
$$
0\neq(\mathfrak{p}+\mathfrak{p}_{\mathrm{cris}})/\mathfrak{p}_{\mathrm{cris}}\subseteq A_E/\mathfrak{p}_{\mathrm{cris}}
$$
is non-zero and there exists an element $a\in \mathfrak{p}$ not mapping to zero in $A_E/\mathfrak{p}_{\mathrm{cris}}$.
Write
$$
a=\sum\limits_{i=0}^\infty [x_i]\pi^i
$$ 
with $x_i\in \mathcal{O}_{C}$. As $\pi\notin \mathfrak{p}$ and $\mathfrak{p}$ is prime we can assume $x_0\neq 0$ after dividing possibly by some power of $\pi$. Moreover, one $x_i$ must be a unit in $\mathcal{O}_{C}$ as $a$ does not map to $0$ in $A_E/\mathfrak{p}_{\mathrm{cris}}$. In other words, $a$ is primitive in the sense of \cite[D\'efinition 2.2.1.]{fargues_fontaine_courbes_et_fibres_vectoriels_en_theorie_de_hodge_p_adique}. By \cite[Th\'eor\`eme 2.4.1.]{fargues_fontaine_courbes_et_fibres_vectoriels_en_theorie_de_hodge_p_adique} resp.\ \cite{lazard_les_zeros_des_fonctions_analytiques_dune_variable_sur_un_corps_value_complet} the element $a$ can be written as a product
$$
a=\prod_{i=1}^n a_i
$$
for some distinguished elements $a_i$ of degree $1$ as we assumed that $C$ is algebraically closed. As $\mathfrak{p}$ is a prime ideal, one of these $a_i$ must lie in $\mathfrak{p}$ which is the contradiction we were looking for. This finishes the proof.
\end{proof}

We can record the following corollary.

\begin{corollary}
\label{corollary_local_rings_at_distinguished_elements_are_dvr}
Let $\xi\in A_E$ be a distinguished element of degree $1$. Then the local ring
$$
A_{E,(\xi)}
$$
is a discrete valuation ring.
\end{corollary}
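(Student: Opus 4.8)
The plan is to exhibit $A_{E,(\xi)}$ as a localization of the ring of integers $\mathcal{O}_{C^\#}$ appearing in the proof of \Cref{lemma_spectrum_of_a_inf}. Recall from that proof (and \cite[Corollaire 2.2.23]{fargues_fontaine_courbes_et_fibres_vectoriels_en_theorie_de_hodge_p_adique}) that since $\xi$ is distinguished of degree $1$, the quotient $A_E/(\xi)$ is isomorphic to $\mathcal{O}_{C^\#}$ for a perfectoid field $C^\#$; in particular $A_E/(\xi)$ is a domain, so $(\xi)$ is a prime ideal. Moreover \Cref{lemma_spectrum_of_a_inf} shows that the only prime ideals of $A_E$ containing $(\xi)$ are $(\xi)$ itself and the maximal ideal $\mathfrak{m}$, since a distinguished element cannot lie in $\mathfrak{p}_{\crys}$ and the closed point is the only other prime in the stratum $\Spec(A_E/(\xi))$. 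Hence $(\xi)$ has height $1$ and $A_{E,(\xi)}$ is a one-dimensional local ring.

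Next I would check that $A_{E,(\xi)}$ is Noetherian and that its maximal ideal is principal. For the latter, it suffices to see that $\xi$ generates $(\xi)A_{E,(\xi)}$, which is immediate. The substantive point is that $A_{E,(\xi)}$ is a domain — this follows because $A_E$ itself is a domain (every $a\in A_E$ has a unique expansion $\sum [a_i]\pi^i$, and one can compare leading Teichm\"uller coefficients, or simply invoke that $A_E$ is $\pi$-torsion free and $\pi$-adically separated with domain quotient $\mathcal{O}_C$). A one-dimensional local domain whose maximal ideal is principal and which is Noetherian is automatically a discrete valuation ring; alternatively, one can invoke the standard characterization that a local domain is a DVR iff its maximal ideal is nonzero and principal, which dispenses with checking the Noetherian condition separately.

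The cleanest route, and the one I would actually write, is to localize the isomorphism $A_E/(\xi)\cong \mathcal{O}_{C^\#}$ differently: consider instead that $A_{E,(\xi)}/\xi A_{E,(\xi)}$ is the localization of $\mathcal{O}_{C^\#}$ at its zero ideal, i.e.\ the fraction field $C^\#$. So the maximal ideal of $A_{E,(\xi)}$ is generated by the single element $\xi$, and the residue field is $C^\#$. Combined with $A_{E,(\xi)}$ being a domain (as a localization of the domain $A_E$) and having Krull dimension $1$ (by \Cref{lemma_spectrum_of_a_inf}, as noted above), this is exactly the statement that $A_{E,(\xi)}$ is a DVR, since a Noetherian local domain of dimension one is a DVR precisely when its maximal ideal is principal, and principality of $\mathfrak{m}_{A_{E,(\xi)}}$ forces the Noetherian property here anyway.

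I do not anticipate a serious obstacle: the only mild subtlety is confirming that $(\xi)$ is genuinely prime and of height exactly $1$, but both are already packaged in \Cref{lemma_spectrum_of_a_inf} and its proof. One should be slightly careful that different distinguished elements of degree $1$ may generate the same prime ideal, but this is harmless for the statement. The argument is essentially a bookkeeping consequence of the structure theory of $A_\inf$ recalled from Fargues--Fontaine.
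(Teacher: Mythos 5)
Your proposal has a genuine gap at the step ``Hence $(\xi)$ has height $1$ and $A_{E,(\xi)}$ is a one-dimensional local ring.'' What you actually extract from $A_E/(\xi)\cong\mathcal{O}_{C^\#}$ and from \Cref{lemma_spectrum_of_a_inf} is the list of primes \emph{containing} $(\xi)$; that controls the coheight of $(\xi)$, not its height. The height of $(\xi)$ is governed by the primes \emph{contained in} $(\xi)$, and \Cref{lemma_spectrum_of_a_inf} does not rule out a nonzero prime $\mathfrak{p}$ with $\mathfrak{p}\subseteq\mathfrak{p}_{\crys}$ and $\mathfrak{p}\subsetneq(\xi)$: a point of the open set $U_{\crys}$ is perfectly allowed to specialize to a point outside $U_{\crys}$, and $U_{\crys}$ is not small (the paper notes it contains nonzero non-closed primes such as $\bigcup_{\varpi}[\varpi]A_E$, and that $\dim A_E\geq 3$). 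Excluding such a $\mathfrak{p}$ is precisely the substance of the paper's proof: for $\mathfrak{p}\subsetneq(\xi)$ one shows $\xi\mathfrak{p}=\mathfrak{p}$ (every $a\in\mathfrak{p}$ is $b\xi$ with $b=a/\xi\in\mathfrak{p}$ since $\xi\notin\mathfrak{p}$), then uses that $A_E$ injects into the $\xi$-adic completion $R=(A_{E,(\xi)})^\wedge_\xi$, which is a DVR with uniformizer $\xi$ by Fargues--Fontaine, so that $\mathfrak{q}=\mathfrak{p}R$ satisfies $\xi\mathfrak{q}=\mathfrak{q}$ and hence vanishes, forcing $\mathfrak{p}=0$. (The paper then concludes via the two finitely generated primes $\{(0),(\xi)\}$ and EGA~0.6.4.7 that $A_{E,(\xi)}$ is noetherian, hence a DVR.) None of this appears in your write-up, and it is the whole point of the corollary.

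A secondary problem: your fallback claim that ``a local domain is a DVR iff its maximal ideal is nonzero and principal, which dispenses with checking the Noetherian condition'' is false. A valuation ring of rank $2$ with value group $\Z\times\Z$ ordered lexicographically is a local domain whose maximal ideal is principal (generated by an element of value $(0,1)$), yet it is two-dimensional and not a DVR --- and it exhibits exactly the pathology $\xi\mathfrak{p}=\mathfrak{p}$ for its height-one prime that the paper's completion argument is designed to exclude. The implication you want does hold once one-dimensionality is known (then $\bigcap_n(\xi^n)A_{E,(\xi)}$ is a prime properly contained in the maximal ideal, hence zero, and every element becomes a unit times a power of $\xi$), so the correct order of business is: first prove $\Spec(A_{E,(\xi)})=\{(0),(\xi)\}$ by the argument above, and only then invoke principality of the maximal ideal. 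The remaining ingredients of your proposal (primality of $(\xi)$, $A_{E,(\xi)}$ a domain, residue field $C^\#$) are fine but are the easy part.
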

\begin{proof}
Without knowing that the local ring $A_{E,(\xi)}$ is a discrete valuation ring it is known (by \cite[D\'efinition 2.7.1.]{fargues_fontaine_courbes_et_fibres_vectoriels_en_theorie_de_hodge_p_adique}) that the $\xi$-adic completion of $A_{E,(\xi)}$ is a discrete valuation ring (at least for $(\xi)\neq (\pi)$, but the remaining case $(\pi)=(\xi)$ is clear).
Let $\mathfrak{p}\subseteq A_E$ be a prime ideal contained in $(\xi)$, i.e., $\mathfrak{p}$ lies in the spectrum
$$
\Spec(A_{E,(\xi)})\subseteq \Spec(A_E)
$$
of the localisation $A_{E,(\xi)}$.
Every $a\in \mathfrak{p}$ can be written as
$$
a=b\xi
$$
for some $b\in A_E$. If we assume $\xi\notin \mathfrak{p}$, then
$$
b=\frac{a}{\xi}\in \mathfrak{p},
$$
i.e., $\xi\mathfrak{p}=\mathfrak{p}$. But $A_E$ injects into the $\xi$-adic completion
$$
R:=(A_{E,(\xi)})^\wedge_\xi,
$$ which is a discrete valuation ring with uniformizer $\xi\in R$.
Let 
$$
\mathfrak{q}=\mathfrak{p}R.
$$
Then $\xi\mathfrak{q}=\mathfrak{q}$ which implies
$$
\mathfrak{q}=0
$$
as $R$ is a discrete valuation ring. But then $\mathfrak{p}=0$ as well.
In other words, we have proven that the spectrum
$$
\Spec(A_{E,(\xi)})=\{(\xi),(0)\}
$$
contains exactly two prime ideals, both of which are finitely generated. By \cite[Chapitre 0, Proposition (6.4.7.)]{grothendieck_dieudonne_ega_I_second_edition} this implies that $A_{E,(\xi)}$ is noetherian and then more precisely a discrete valuation ring.
\end{proof}

We remark that the subset 
$$
U_\crys\subseteq \Spec(A_E)
$$
remains mysterious. For example it contains the non-closed prime ideal
$$
\bigcup\limits_{\varpi\in \mathfrak{m}_C}[\varpi]A_E\subsetneq \mathfrak{p}_{\mathrm{cris}}
$$ (cf.\ \cite[Section 1.10.4.]{fargues_fontaine_courbes_et_fibres_vectoriels_en_theorie_de_hodge_p_adique}).
In particular, the Krull dimension of $A_E$ is at least $3$.
In fact, the Krull dimension of $A_E$ is infinite, cf.\ \cite{lang2019ainf}.
One main point in our proof will be to pass to a ``decompletion'' $\tilde{A}_E$ of $A_E$ for which the crystalline part has an easier structure, it is a valuation ring of rank $1$, cf.\ \Cref{sec:decompleting-a_e-4-crystalline-part-of-tilde-a-e}.

\section{Some commutative algebra over $A_E$}

We shortly want to mention some results on commutative algebra over $A_E$ generalizing those in \cite[Chapter 4]{bhatt_morrow_scholze_integral_p_adic_hodge_theory}.
Recall that $\pi\in \mathcal{O}_E$ is a uniformizer, that $s\in \Spec(A_E)$ denotes the unique closed point of $\Spec(A_E)$ and that $U=\Spec(A_E)\setminus\{s\}$ is the punctured spectrum of $A_E$.  
The proof of \cite[Lemma 4.6.]{bhatt_morrow_scholze_integral_p_adic_hodge_theory} generalizes to the following lemma.

\begin{lemma}
\label{lemma-extending-vector-bundles}
The restriction of vector bundles induces an equivalence of categories between vector bundles on $\Spec(A_E)$ and vector bundles on $U$. In particular, all vector bundles on $U$ are free.   
\end{lemma}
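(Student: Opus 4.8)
The plan is to prove separately that the restriction functor is fully faithful and that it is essentially surjective; the last sentence of the lemma then follows, since $A_E$ is local and a finite projective module over a local ring is free. As a preliminary I would fix a nonzero $\varpi\in\mathfrak m_C$ and record that $U=\Spec(A_E[1/\pi])\cup\Spec(A_E[1/[\varpi]])$: if a prime $\mathfrak p\in U$ contains $\pi$ then $\mathfrak p$ is the preimage of a prime of $A_E/\pi A_E\cong\mathcal O_C$, and the rank‑one valuation ring $\mathcal O_C$ has only the primes $(0)$ and $\mathfrak m_C$, the latter giving $\mathfrak p=\mathfrak m\notin U$; so $\mathfrak p=\pi A_E$, which does not contain $[\varpi]$ because $[\varpi]/\pi\notin A_E$. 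Thus $U$ is covered by the affine opens $V=\Spec(A_E[1/\pi])$ and $W=\Spec(A_E[1/[\varpi]])$, with $V\cap W=\Spec(A_E[1/(\pi[\varpi])])$, and $W\setminus(V\cap W)$ is the single point $\pi A_E$, whose local ring $A_{E,\pi A_E}$ is — by \Cref{corollary_local_rings_at_distinguished_elements_are_dvr} applied to the distinguished element $\pi=1\cdot(\pi-[0])$ — a discrete valuation ring with uniformizer $\pi$ and fraction field $\mathrm{Frac}(A_E)$.

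For full faithfulness I would use that for a finite projective $A_E$‑module $P$ the sheaf axiom for the cover $U=V\cup W$ identifies $\Gamma(U,\widetilde P|_U)$ with $P[1/\pi]\cap P[1/[\varpi]]$ (intersection inside $P[1/(\pi[\varpi])]$), and this equals $P$: for $P=A_E$, an element $a/\pi^i=b/[\varpi]^j$ with $a,b\in A_E$ satisfies $\pi^i\mid a[\varpi]^j$ in $A_E$, and since $\pi$ is a prime element of $A_E$ (because $A_E/\pi A_E\cong\mathcal O_C$ is a domain) not dividing $[\varpi]$, we get $\pi^i\mid a$, so the element lies in $A_E$; the general case follows by writing $P$ as a direct summand of a free module. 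Applying this to $P=\Hom_{A_E}(P_1,P_2)$ (again finite projective) for finite free $A_E$‑modules $P_1,P_2$ shows that restriction is bijective on morphism sets, hence fully faithful; in particular an extension of a vector bundle from $U$ is unique if it exists.

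For essential surjectivity, let $\mathcal E$ be a vector bundle of rank $n$ on $U$. I would first reduce to the case $C$ algebraically closed by faithfully flat descent along $A_E\to A_{E,\widehat{\overline C}}$, using the full faithfulness above to descend the descent datum on the extension. Write $\mathcal E|_V=\widetilde{M_1}$ and $\mathcal E|_W=\widetilde{M_2}$ with $M_1,M_2$ finite projective of rank $n$ over $A_E[1/\pi]$ resp.\ $A_E[1/[\varpi]]$. The key point — at which I would invoke, and have to adapt, the argument of \cite[Lemma 4.6]{bhatt_morrow_scholze_integral_p_adic_hodge_theory} — is that $M_1$ and $M_2$ are in fact \emph{free}. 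Granting this, fix trivializations $M_1\cong A_E[1/\pi]^{\oplus n}$, $M_2\cong A_E[1/[\varpi]]^{\oplus n}$; then $\mathcal E$ is described by a transition matrix $g\in\mathrm{GL}_n(A_E[1/(\pi[\varpi])])$, and by Mayer--Vietoris $M:=\Gamma(U,\mathcal E)$ is identified with the $A_E$‑submodule $\{\,y\in A_E[1/[\varpi]]^{\oplus n}:g\cdot y\in A_E[1/\pi]^{\oplus n}\,\}$ of $A_E[1/[\varpi]]^{\oplus n}$, with $M[1/\pi]=M_1$ and $M[1/[\varpi]]=M_2$. Since $W$ differs from $V\cap W$ only in the point $\pi A_E$, whose local ring is the discrete valuation ring $A_{E,\pi A_E}$ with uniformizer $\pi$, a Beauville--Laszlo / elementary‑divisors argument at $\pi A_E$ allows one to replace $g$, within its double coset $\mathrm{GL}_n(A_E[1/[\varpi]])\backslash\,\mathrm{GL}_n(A_E[1/(\pi[\varpi])])\,/\,\mathrm{GL}_n(A_E[1/\pi])$, by the identity; then $M\cong A_E^{\oplus n}$, so $\widetilde M$ is a finite free bundle on $\Spec(A_E)$ with $\widetilde M|_U\cong\mathcal E$. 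This gives essential surjectivity and, since $A_E$ is local, that every vector bundle on $U$ is free.

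The main obstacle is exactly the freeness of finite projective modules over the non‑Noetherian rings $A_E[1/\pi]$ and $A_E[1/[\varpi]]$, together with the compatible normalization of the transition matrix at $\pi A_E$; everything else is routine bookkeeping. In mixed characteristic this is the substance of \cite[Lemma 4.6]{bhatt_morrow_scholze_integral_p_adic_hodge_theory} and of the structural analysis of $A_{\inf}$ preceding it, and the genuinely new work here is to supply its equal‑characteristic counterpart, i.e.\ the corresponding analysis of $\mathcal O_C((\pi))$ and $\mathcal O_C[[\pi]][1/[\varpi]]$.
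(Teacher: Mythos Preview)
Your full faithfulness argument is clean and correct. The essential surjectivity route, however, has a logical inversion. You locate the ``main obstacle'' in proving that the chart modules $M_1$ and $M_2$ are free over $A_E[1/\pi]$ and $A_E[1/[\varpi]]$, and propose to extract this from \cite[Lemma~4.6]{bhatt_morrow_scholze_integral_p_adic_hodge_theory}. But that lemma \emph{is} the statement you are proving, and in both BMS and this paper freeness of finite projective $A_E[1/\pi]$-modules is a \emph{consequence} of the extension result (this is precisely \Cref{propositon-locally-free-sheaves-after-inverting-p}), not an ingredient for it. Moreover, even granting free trivializations on both charts, your final step---reducing the transition matrix $g\in\mathrm{GL}_n(A_E[1/(\pi[\varpi])])$ to the identity in its double coset---is itself equivalent to the triviality of $H^1(U,\mathrm{GL}_n)$, i.e.\ to the lemma. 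An elementary-divisors factorization at the discrete valuation ring $A_{E,(\pi)}$ only yields factors in $\mathrm{GL}_n(A_{E,(\pi)})$ (or its completion), not in the global subrings $\mathrm{GL}_n(A_E[1/[\varpi]])$ and $\mathrm{GL}_n(A_E[1/\pi])$ that the double-coset statement requires; passing from the local factorization to the global one is again the whole content of the lemma.

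By contrast, the paper simply notes that the BMS proof runs verbatim with $p$ replaced by $\pi$: one works directly with $M=H^0(U,\mathcal E)$ and shows it is finite free over $A_E$, without first trivializing on the two charts and without any Cartan-type decomposition. The argument is uniform in mixed and equal characteristic, so the equal-characteristic ``genuinely new work'' you anticipate for $\mathcal O_C[[\pi]]$ is not in fact needed.
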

\begin{proof}
Replacing $p$ by $\pi$ in \cite[Lemma 4.6.]{bhatt_morrow_scholze_integral_p_adic_hodge_theory} the same proof works and we refer the reader to its proof.
\end{proof}

The next corollary is (nearly) \cite[Corollary 4.12.]{bhatt_morrow_scholze_integral_p_adic_hodge_theory}.

\begin{corollary}
\label{propositon-locally-free-sheaves-after-inverting-p}
Let $N$ be a finite projective $A_E[1/\pi]$-module. Then $N$ is free.  
\end{corollary}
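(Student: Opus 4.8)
The plan is to deduce this from \Cref{lemma-extending-vector-bundles} by spreading $N$ out to a vector bundle over all of $\Spec(A_E)$ and then invoking the triviality of vector bundles on $A_E$. First I would observe that $V = \Spec(A_E[1/\pi]) \subseteq U$ is a quasi-compact open subset (it is the non-vanishing locus of $\pi$), so a finite projective module $N$ over $A_E[1/\pi]$ corresponds to a vector bundle $\mathcal{N}$ on $V$. The key step is to extend $\mathcal{N}$ to a vector bundle on the whole punctured spectrum $U$. Since $A_E$ is $\pi$-adically complete (indeed $\pi$-adically separated and complete by the Teichm\"uller expansion), the closed point $s$ lies in $V(\pi)$, and $U \setminus V = U \cap V(\pi)$ is contained in the closed subset $V(\pi) \subseteq \Spec(A_E)$, on which things can be controlled; one approach is to cover $U$ by $V$ together with the basic opens $D(\xi)$ for distinguished elements $\xi$ of degree $1$ (using \Cref{lemma_spectrum_of_a_inf}), each of which contains $s$ in its complement, and to note that on each $D(\xi)$ the module extends because $A_{E,(\xi)}$ is a discrete valuation ring (\Cref{corollary_local_rings_at_distinguished_elements_are_dvr}), so every finite projective module over it — or over a suitable localization — is free of constant rank.

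A cleaner route, which I would actually pursue, is the following. A finite projective $A_E[1/\pi]$-module $N$ is the module of global sections of a vector bundle on the quasi-compact quasi-separated scheme $V$; by a standard limit/approximation argument, or directly by choosing generators, $N$ is the base change to $A_E[1/\pi]$ of a \emph{finitely presented} $A_E$-module $M$, which we may moreover take to be $\pi$-torsion free (replace $M$ by its quotient by the $\pi$-power torsion, which is finitely generated since $A_E$ is coherent — this uses that $A_E$, while not noetherian, has enough finiteness, or alternatively one argues locally). The support of $M/(\text{locally free locus})$ is then a closed subset of $\Spec(A_E)$ not meeting $V$, hence contained in $V(\pi)$; after possibly shrinking by multiplying generators by powers of $\pi$ one arranges that $M$ is locally free away from $s$, i.e.\ $M_{|U}$ is a vector bundle on $U$. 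By \Cref{lemma-extending-vector-bundles} this vector bundle on $U$ is the restriction of a vector bundle on $\Spec(A_E)$, which is then free (again by \Cref{lemma-extending-vector-bundles}, whose last sentence asserts all such bundles are free). Restricting a free $A_E$-module back to $A_E[1/\pi]$ gives a free module, and since $N$ is recovered as the restriction of this bundle to $V$, we conclude $N$ is free.

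The main obstacle is the spreading-out step: one must produce from the finite projective $A_E[1/\pi]$-module $N$ an honest vector bundle on the punctured spectrum $U$, not merely on $V$. The subtlety is that $U$ is larger than $V$ — it contains the crystalline part $U_{\crys}$ and the various points $(\xi)$ — and $A_E$ is neither noetherian nor local in a way that makes glueing automatic. The way around it is exactly the structural description from \Cref{section_spectrum_of_a}: every point of $U$ other than those in $V$ either is some $(\xi)$ with $A_{E,(\xi)}$ a DVR, or lies in $U_{\crys}$; in the first case local freeness is automatic over a DVR, and in the second case one uses that $A_{E,\mathfrak{p}_{\crys}}$ is again sufficiently well-behaved (or one simply notes $\mathfrak{p}_{\crys}$ has residue characteristic $p$ but the module, being $\pi$-inverted somewhere, forces the relevant localizations into $V$). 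Once the bundle is glued over $U$, the rest is a formal consequence of \Cref{lemma-extending-vector-bundles}. I expect the write-up to be short, essentially citing \cite[Corollary 4.12.]{bhatt_morrow_scholze_integral_p_adic_hodge_theory} and remarking that the argument there, with $p$ replaced by $\pi$, goes through verbatim.
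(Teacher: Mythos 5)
Your second (``cleaner'') route is essentially the paper's argument: spread $N$ out to a finitely generated, $\pi$-torsion-free $A_E$-module $M$ with $M[1/\pi]=N$, check that the associated sheaf restricts to a vector bundle on $U$, and then invoke \Cref{lemma-extending-vector-bundles} to conclude freeness. Two points in your write-up need repair, though. First, you appeal to coherence of $A_E$ to kill the $\pi$-power torsion of $M$; this is both unjustified (coherence of $A_\inf$ is a delicate matter and should not be assumed) and unnecessary: simply take $M\subset N$ to be a finitely generated $A_E$-submodule with $M[1/\pi]=N$ --- as a submodule of a module on which $\pi$ acts invertibly it is automatically $\pi$-torsion free, which is what the paper does. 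Second, the ``main obstacle'' you describe, namely having to treat $U_{\crys}$ and the various points $(\xi)$ separately when passing from $V$ to $U$, is not actually there: $\pi$ is itself distinguished of degree $1$, $A_E/\pi\cong\mathcal{O}_C$ has exactly two prime ideals, and hence $U\setminus V=\{(\pi)\}$ is a single point, while $U_{\crys}$ and every $(\xi)$ with $(\xi)\neq(\pi)$ already lie in $V$. This is precisely the observation the paper exploits: $M\otimes_{A_E}A_{E,(\pi)}$ is finite free because $A_{E,(\pi)}$ is a discrete valuation ring (\Cref{corollary_local_rings_at_distinguished_elements_are_dvr}) and $M$ is finitely generated and $\pi$-torsion free, and local freeness on a neighbourhood of $(\pi)$ (rather than merely at the point) is supplied by Beauville--Laszlo glueing along $\pi$. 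Your first sketch --- covering $U$ by $V$ together with the opens $D(\xi)$ and arguing ``on each $D(\xi)$'' via the DVR $A_{E,(\xi)}$ --- conflates the local ring at $(\xi)$ with the open locus where $\xi$ is invertible and should be discarded in favour of the second route.
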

\begin{proof}
Let $M\subset N$ be a finitely generated $A_E$-submodule such that $M[1/\pi]=N$. The localisation
$$
A_{E,(\pi)}
$$ 
of $A_E$ at the prime ideal $(\pi)$ for $\pi\in \mathcal{O}_E$ a uniformizer is a discrete valuation ring (cf.\ \Cref{corollary_local_rings_at_distinguished_elements_are_dvr} or \cite[Lemma 4.10]{bhatt_morrow_scholze_integral_p_adic_hodge_theory}). As $M$ is finitely generated and $\pi$-torsion free the localized module $M\otimes_{A_E}A_{E,(\pi)}$ is finite free. Using Beauville-Laszlo (cf.\ \cite{beauville_laszlo_un_lemme_de_descente}) (and that $\Spec(A_E/p)\cong \Spec(\mathcal{O}_C)$ has exactly two points) the quasi-coherent sheaf $\mathcal{M}$ on $\Spec(A_E)$ defined by $M$ restricts thus to a vector bundle on the punctured spectrum $U$. By \Cref{lemma-extending-vector-bundles} this vector bundle is trivial which implies that $N$ is already free.
\end{proof}

In particular, we can conclude that every line bundle on $U$ resp.\ $V$ is trivial, i.e.,
$$
H^1(U,\mathbb{G}_m)=H^1(V,\mathbb{G}_m)=\{1\}.
$$



\section{A general affinoid comparison theorem}
\label{sec:gener-affin-comp}

Later we want to prove, if $C$ is algebraically closed and maximally complete, a cohomological bound for the \'etale cohomology of
\[
  V=\Spec(A_E[1/\pi])
\]
by deducing it from results on the \'etale cohomology of the
associated adic space
\[
  \Spa(A_E[1/\pi],A_E)
\]
(where $A_E$ carries the $\pi$-adic, and not the $(\pi,[\varpi])$-adic topology). For this we need to slightly improve a result of Huber resp.\ Hansen, cf.\ \cite[Corollary 3.2.3.]{Huber1996a} resp.\ \cite[Theorem 1.9.]{hansen2020vanishing}, on comparing \'etale cohomology of schemes and adic spaces/diamonds in the affine case.

Let $(A,A^+)$ be a pair of a Tate\footnote{I.e., a complete topological $\Z_p$-algebra $A$, containing a topologically nilpotent unit and an open and bounded subring.} $\Z_p$-algebra $A$ with an open and integrally closed subring $A^+$. To this pair one can associate the spatial diamond
\[
  \mathrm{Spd}(A,A^+)
\]
sending a perfectoid space $X$ over $\F_p$ to the set of triples
\[
  (X^\sharp, \iota, f),
\]
where $X^\sharp$ is a perfectoid space (over $\Z_p$), $\iota\colon (X^\sharp)^\flat\cong X$ an identification of the tilt of $X^\sharp$ with $X$, and $f\colon A\to \mathcal{O}_{X^\sharp}(X^\sharp)$ a continuous $\Z_p$-algebra homomorphism sending $A^+$ to $\mathcal{O}_{X^\sharp}^+(X^\sharp)$, cf.\ \cite[Proposition 15.4.]{scholze_etale_cohomology_of_diamonds}.

Let us note that there exists a natural morphism
\[
  \mu\colon \Spd(A,A^+)_{\et}\to \Spec(A)_{\et}
\]
of \'etale sites (here $\Spd(A,A^+)_{\et}$ denotes the site from \cite[Definition 14.1.]{scholze_etale_cohomology_of_diamonds}).

Combined with \cite[Lemma 15.6.]{scholze_etale_cohomology_of_diamonds} the following result was obtained under noetherianess assumptions by Huber/Hansen. The general case follows by an easy noetherian approximation, cf.\ \cite[the proof of Theorem 4.10]{vcesnavivcius2019purity} for a similar argument. 

\begin{proposition}
  \label{sec:gener-affin-comp-1-affinoid-comparison-theorem}
  Let $A$ be a Tate $\Z_p$-algebra and $A^+\subseteq A$ an open and integrally closed subring.  Then for each \'etale torsion abelian sheaf $\mathcal{F}$ on $\Spec(A)$ and any $i\geq 0$ the canonical morphism
  \[
    H^i(\Spec(A)_{\et},\mathcal{F})\to H^i(\Spd(A,A^+)_\et,\mu^\ast \mathcal{F})
  \]
  is an isomorphism.
\end{proposition}
\begin{proof}
  Both sides commute with colimits in $\mathcal{F}$ (as the sites $\Spec(A)_{\et}$ and $\Spd(A,A^+)_{\et}$ are coherent, the latter by the fact that $\Spd(A,A^+)$ is a \textit{spatial} diamond). Thus we may assume that $\mathcal{F}$ is constructible.
  Fix a pseudo-uniformizer $t\in A^+$.
  Recall that the topology on every open bounded subring $A_0$ of $A^+$ is $t$-adic and $A=A_0[1/t]$, cf.\ \cite[Chapter 2]{scholze2020berkeley}, and that $A^+$ is the filtered union of its open and bounded subrings (which wlog contain $t$).
  We can therefore write
  \[
    A^+=\varinjlim\limits_{i\in I} B_i
  \]
  as a filtered colimit of $\Z_p[t]$-algebras, which are the $t$-adic completions of $\Z_p$-algebras of finite type.
  Then $\mathcal{F}$ is defined already over $\Spec(B_{i_0}[1/t])$ for some $i_0\in I$, i.e., it is the pullback of some constructible \'etale sheaf $\mathcal{F}_{i_0}$ on $\Spec(B_{i_0}[1/t])$. We may assume that $i_0\in I$ is initial. For $i\in I$ set
  \[
    \mathcal{F}_i
  \]
  as the pullback of $\mathcal{F}_{i_0}$ along $\Spec(B_i[1/t])\to \Spec(B_{i_0}[1/t])$. Then
  \[
    H^\ast(\Spec(A)_{\et},\mathcal{F})\cong \varinjlim\limits_{i\in I} H^\ast(\Spec(B_i[1/t])_{\et},\mathcal{F}_i)
  \]
  by general results on \'etale cohomology of schemes. On the other hand, note that by definition
  \[
    \Spd(A,A^+)=\varprojlim\limits_{i\in I^{\mathrm{op}}} \Spd(B_i[1/t],B_i),
  \]
  and thus by \cite[Proposition 14.9.]{scholze_etale_cohomology_of_diamonds}
  \[
    H^\ast(\Spd(A,A^+)_{\et},\mathcal{F})\cong \varinjlim\limits_{i\in I} H^\ast(\Spd(B_i[1/t],B_i)_{\et},\mathcal{F}_i)
  \]
  Therefore we may replace $A$ by some $B_i[1/t]$ and $B_i$ by its (noetherian) integral closure in $B_i[1/t]$, and apply \cite[Theorem 1.9.]{hansen2020vanishing} to conclude.
\end{proof}

\section{Decompleting $A_E$}
\label{sec:decompleting-a_e}

In this section we want to introduce (if $C$ is algebraically closed and maximally complete) a certain ``decompletion''
\[
  \tilde{A}_E\subseteq A_E
\]
of $A_E$, which has the crucial property that its ``crystalline'' part is much easier, namely the spectrum of a valuation ring of rank $1$, cf.\ \Cref{sec:decompleting-a_e-4-crystalline-part-of-tilde-a-e}. Moreover, we will prove that the fraction field of $\tilde{A}_E$ has cohomological dimension $\leq 2$, cf.\ \Cref{sec:decompleting-a_e-1-cohomological-dimension-of-tilde-a-e}, \Cref{sec:decompleting-a_e-1-kato-cohomologie-of-tilde-k-e}, which is necessary as we want to apply Gille's result on Serre's conjecture II, cf.\ \cite{Gille2001} to it.

For the whole section we assume that the field $C$ is algebraically closed and maximally complete (in the sense of \cite{poonen1993maximally}). For convenience we assume that the value group of $C$ is $\R$. Let us note that by \cite[Corollary 6]{poonen1993maximally} any complete non-archimedean field can be embedded in an extension satisfying these conditions. In fact, one can construct such an extension explicitly using Mal'cev-Neumann series, cf.\ \cite{poonen1993maximally}.\footnote{We thank Peter Scholze for giving the hint to consider these rings.}

\begin{definition}
  \label{sec:decompleting-a_e-1-definition-malcev-von-neumann-rings}
  Let $R$ be any ring. Then the Mal'cev-Neumann ring $R((t^\R))$ with residue ring $R$ (and value group\footnote{One can replace $\R$ by any totally ordered abelian group in the definition.} $\R$) is the ring of formal power series
  \[
    a=\sum\limits_{x\in \R} a_xt^x
  \]
  with coefficients $a_x\in R$, such that the support
  \[
    \mathrm{supp}(a):=\{x\in \R \ |\ a_x\neq 0\}
  \]
  is a well-ordered subset of $\R$.\footnote{We recall that a totally ordered set is well-ordered if any non-empty subset has a minimal element.}
\end{definition}

By the condition on the support the $R$-module $R((t^\R))$ has the natural well-defined multiplication
\[
  (\sum\limits_{x\in \R}a_xt^x)(\sum\limits_{x\in \R} b_xt^x):=\sum\limits_{x\in \R}(\sum\limits_{y+z=x}a_yb_z)t^x
\]
turning it into a ring, cf.\ \cite[Section 3]{poonen1993maximally}.
Let us note that there is the natural valuation
\[
  \nu\colon R((t^\R))\to \R\cup \{\infty\},\ a\mapsto \mathrm{inf}\{\mathrm{supp}(a)\}
\]
on
\[
  R((t^\R)).
\]
Let
\[
  R[[t^{\R_{\geq 0}}]]:=\nu^{-1}(\R_{\geq 0}\cup \{\infty\}).
\]
Clealy, in the topology on $R((t^\R))$ induced by $\nu$ the subring $R[[t^{\R_{\geq 0}}]]$ is open and its subspace topology is the $t$-adic topology. Moreover,
\[
  R((t^{\R}))\cong R[[t^{\R_{\geq 0}}]][1/t].
\]

Let us turn back to the question of ``decompleting'' $A_E$.
Under the assumptions on $C$ from the beginning of the section we get that
\[
  C\cong k((t^\R))
\]
by \cite[Corollary 6]{poonen1993maximally}, where $k$ is the (algebraically closed) residue field of $C$. Of course, this isomorphism can be chosen to be compatible with valuations. We therefore redefine $C$ in this section by setting
\[
  C=k((t^\R)).
\]
In particular, we get
\[
  \mathcal{O}_C= k[[t^{\R_{\geq 0}}]]:=\nu^{-1}(\R_{\geq 0}\cap \{\infty\}).
\]
Recall that
\[
  A_E=W(\mathcal{O}_C)\hat{\otimes}_{W(k)}\mathcal{O}_E,
\]
where the completion is $\pi$-adic, cf.\ \Cref{sec:notations}.
We can now define our promised ``decompletion''
\[
  \tilde{A}_E:=\mathcal{O}_E[[t^{\R_{\geq 0}}]].
\]
Let us note that there exists a natural morphism
\[
  \tilde{A}_E\to A_E.
\]
of $\mathcal{O}_E$-algebras.
In fact, we have the following.

\begin{lemma}
  \label{sec:decompleting-a_e-3-pi-completion-of-tilde-a-e}
  The ring $\tilde{A}_E$ is $\pi$-torsion free and its $\pi$-adic completion identifies with $A_E$. Moreover, the ring $\tilde{A}_E$ is henselian along $(\pi)$.
\end{lemma}
\begin{proof}
  The $\pi$-torsion freeness is clear as $\tilde{A}_E$ embeds (as an $\mathcal{O}_E$-module) into the product $\prod\limits_{\R} \mathcal{O}_E$. Moreover,
  \[
    \tilde{A}_E/\pi\cong k[[t^{\R_{\geq 0}}]]=\mathcal{O}_C
  \]
  is perfect. This implies that
  \[
    (\tilde{A}_E)^\wedge_\pi\cong A_E
  \]
  as perfect $k=\mathcal{O}_E/\pi$-algebras lift uniquely to $\pi$-torsion free $\pi$-complete $\mathcal{O}_E$-algebras.
  To see that $\tilde{A}_E$ is henselian along $\pi$ we may (by $t$-completeness) argue mod $t$. The ideal
  \[
    \nu^{-1}(\R_{>0}\cup \{\infty\})/(t)\subseteq \tilde{A}_E/(t)
  \]
  is locally nilpotent and thus 
  \[
    \tilde{A}_E/t
  \]
  is henselian along $\pi$ if and only if
  \[
    \tilde{A}_E/\nu^{-1}(\R_{>0}\cup \{\infty\})\cong \mathcal{O}_E
  \]
  is henselian along $\pi$. But the latter holds as $\mathcal{O}_E$ is $\pi$-adically complete.
\end{proof}

This has the following consequence.

\begin{lemma}
  \label{sec:decompleting-a_e-5-torsors-on-tilde-a-e-1-over-pi}
  Let $H$ be any smooth affine group scheme over $\mathcal{O}_E$. Then the canonical map
  \[
    H^1_{\et}(\Spec(\tilde{A}_E[1/\pi]),H)\to H^1_{\et}(\Spec(A_E[1/\pi]),H)
  \]
  is a bijection.
\end{lemma}
\begin{proof}
  This follows from \Cref{sec:decompleting-a_e-3-pi-completion-of-tilde-a-e}, \cite[Theorem 5.8.14.]{gabber2003almost} and the observation that by smoothness any $H$-torsor for the fpqc-topology admits sections \'etale locally.
\end{proof}

Set
\[
  \tilde{\mathfrak{p}}_{\crys}:=\nu^{-1}(\R_{>0}\cup \{\infty\})
\]
and define the ``crystalline'' part of $\Spec(\tilde{A}_E)$ as
\[
  \tilde{U}_\crys:=\Spec(\tilde{A}_{E,\tilde{\mathfrak{p}}_\crys}).
\]

The following observations are crucial.

\begin{lemma}
  \label{sec:decompleting-a_e-4-crystalline-part-of-tilde-a-e}
  The following hold true:
  \begin{enumerate}
  \item[1)] Under $\Spec(A_E)\to \Spec(\tilde{A}_E)$ the prime ideal $\mathfrak{p}_\crys\subseteq A_E$ maps to $\tilde{\mathfrak{p}}_{\crys}\subseteq \tilde{A}_E$. In particular, $U_\crys\subseteq \tilde{U}_\crys\times_{\Spec(\tilde{A}_E)} \Spec(A_E)$.
  \item[2)] The ring
    \[
      \tilde{A}_{E,\tilde{\mathfrak{p}}_\crys}
    \]
    is a valuation ring (of rank $1$).
  \end{enumerate}
\end{lemma}

Thus, by replacing $A_E$ by $\tilde{A}_E$ the structure of the crystalline part has become much simpler.

\begin{proof}
  For $1)$: Note that $\mathfrak{p}_\crys$ resp.\ $\tilde{\mathfrak{p}}_\crys$ are the images of the generic point of the canonical morphism
  \[
    \Spec(\mathcal{O}_E)\to \Spec(A_E)
  \]
  resp.\
  \[
    \Spec(\mathcal{O}_E)\to \Spec(\tilde{A}_E).
  \]
  Now it suffices to see that the morphism $\tilde{A_E}\to \mathcal{O}_E$ factors over $A_E$.

  For $2)$: The localization $\tilde{A}_{E,\tilde{\mathfrak{p}}_\crys}$ is the valuation ring $R\subseteq \mathrm{Frac}(\tilde{A}_E)$ induced by the valuation
  \[
    \nu\colon \tilde{A}_E\to \R\cup \{\infty\}.
  \]
  Indeed, the assumptions of \Cref{sec:decompleting-a_e-1-criterion-for-localization-to-be-a-valuation-ring} are satisfied and we can conclude.
\end{proof}

\begin{lemma}
  \label{sec:decompleting-a_e-1-criterion-for-localization-to-be-a-valuation-ring} Let $R$ be an integral domain, and let $\omega\colon Q:=\mathrm{Frac}(R)\to \Gamma\cup \{\infty\}$ be an (arbitrary) valuation on its fraction field, such that $\omega(R)\subseteq \Gamma_{\geq 0}\cup \{\infty\}$.
  Set
  \[
    \mathfrak{p}:=\omega^{-1}(\Gamma_{>0}\cup \{\infty\}).
  \]
  Then $\mathfrak{p}\subseteq R$ is a prime ideal in $R$ and if $\omega^{-1}(\Gamma_{\geq \gamma}\cup \{\infty\})$ is a principal ideal for any $\gamma\in \Gamma$, the localization $R_{\mathfrak{p}}$ is a valuation ring (with valuation $\omega$).
\end{lemma}
\begin{proof}
  It is clear that $\mathfrak{p}$ is a prime ideal in $R$.
  Let $r,s\in R\setminus\{0\}$. Set
  \[
    \gamma_1:=\omega(r),\ \gamma_2:=\omega(s).
  \]
  Without losing generality we may assume $\gamma_1\geq \gamma_2$. For $i=1,2$ let
  \[
    t_i\in R
  \]
  be a generator of $\omega^{-1}(\Gamma_{\geq \gamma_i}\cup \{\infty\})$. Using our assumption that $\omega(R\setminus \{0\})\subseteq \Gamma_{\geq 0}$ we note that
  \[
    \omega(t_i)=\gamma_i.
  \]
  Then $(t_1)\subseteq (t_2)$ and there exists $a,b,c\in R$ with
  \[
    at_2=t_1,\ bt_1=r,\ ct_2=s.
  \]
  Thus,
  \[
    \frac{r}{s}=\frac{bt_1}{ct_2}=\frac{a b t_2}{ct_2}=\frac{ab}{c}.
  \]
  As
  \[
    \omega(t_2)=\gamma_2=\omega(s),
  \]
  we can conclude
  \[
    \omega(c)=0,
  \]
  i.e., $c\notin \mathfrak{p}$. This finishes the proof.
\end{proof}

Our next aim is to prove that the fraction field
\[
 \tilde{K}_E:=\mathrm{Frac}(\tilde{A}_E)
\]
of $\tilde{A}_E$ has $\ell$-cohomological dimension $\mathrm{cd}_{\ell}(\tilde{K}_E)\leq 2$ for all primes $\ell$.

\begin{theorem}
  \label{sec:decompleting-a_e-1-cohomological-dimension-of-tilde-a-e}
  Let $\ell$ be any prime. Then
  \[
    \mathrm{cd}_\ell(\tilde{K}_E)\leq 2.
  \]
\end{theorem}
\begin{proof}
  The case $\ell=p=\mathrm{char}(E)$ follows by Artin-Schreier theory, hence we may assume that we are outside this case, i.e., $\ell\neq p$ if $\mathrm{char}(E)=p$.
  Let
  \[
    j\colon \Spec(\tilde{K}_E)\to \tilde{V}:=\Spec(\tilde{A}_E[1/\pi])
  \]
  be the natural inclusion, and let $\mathcal{F}$ be an \'etale $\F_\ell$-sheaf on $\Spec(\tilde{K}_E)$. Then we obtain a distinguished triangle
  \[
    j_\ast \mathcal{F} \to Rj_\ast \mathcal{F}\to Q
  \]
  in $D(\tilde{V}_\et,\F_\ell)$.
  Pulling back along the morphism
  \[
    f\colon V:=\Spec(A_E[1/\pi])\to \tilde{V}
    \]
    yields the triangle
    \[
      f^\ast j_\ast\mathcal{F}\to f^\ast Rj_\ast \mathcal{F}\to f^\ast Q.
    \]
    We claim that
    \[
      f^\ast Q
    \]
    is concentrated in degree $1$, and that
    \[
      f^\ast Q_{|U_\crys}=0,
    \]
    where $U_\crys\subseteq V$ denotes the crystalline part, cf.\ \Cref{section_spectrum_of_a}. We may check this on points of $V$. Let $y\in V$ be a point corresponding to a distinguished element $a\in A_E$, cf.\ \Cref{lemma_spectrum_of_a_inf}.
    By (the easier case of henselian discrete valuation rings of) the Fujiwara-Gabber theorem, cf.\ \cite[Corollary 1.18.(2)]{bhatt2018arc}, it suffices to see that the complete discretely valued field
    \[
      (A_E[1/\pi])^\wedge_a[1/a],
    \]
    has cohomological dimension $1$. This is known as its residue field is algebraically closed.\footnote{We don't know if it can happen that $f(y)$ maps to the generic point of $\tilde{V}$ for $y$ outside $U_\crys$. However, in this case $R^ij_{\ast}(\mathcal{F})_{f(y)}=0$ holds trivially for $i>0$.}
    By \Cref{section_spectrum_of_a} the remaining points lie in the crystalline part $U_\crys\subseteq V$. Now we crucially use our explicit understanding of the crystalline part $\tilde{U}_\crys$ of $\tilde{V}$, cf.\ \Cref{sec:decompleting-a_e-4-crystalline-part-of-tilde-a-e}.
    As $U_\crys$ maps to the crystalline part $\tilde{U}_\crys$ of $\tilde{V}$ it suffices to see that
    \[
      (R^ij_\ast\mathcal{F})_{x_\crys}=0
    \]
    for $i\geq 1$, and $x_\crys=\tilde{\mathfrak{p}}_\crys$ the ``crystalline point'' of $\tilde{V}$ (triviality for points mapping to the generic point of $\tilde{V}$ is clear). Again by Fujiwara-Gabber we may replace the strict henselization of $\tilde{V}$ at $x_\crys$, a valuation ring of rank $1$, by its completion. If $E$ is of characteristic $p$ we may further pass to its perfection. In the end we obtain a complete valuation ring
    \[
      R
    \]
    of rank $1$ with an algebraically closed residue field, value group $\R$. We have to see that the field
    \[
      L:=\mathrm{Frac}(R)
    \]
    is of $\ell$-cohomological dimension $0$. Now we can apply ramification theory for fields like $L$, cf.\ \cite[Chapter III]{endler1972valuation}. If $\mathrm{char}(E)=0$, then we get that $L$ is algebraically closed, and if $\mathrm{char}(E)=p>0$, then
    \[
      \mathrm{Gal}(\overline{L}/L)
    \]
    is a pro-$p$-group. As we assumed in the beginning that $\ell\neq p$ if $\mathrm{char}(E)=p$, we get the result.

    Next we claim that
    \[
      R\Gamma(\tilde{V},Q)\in D^{\leq 1}.
    \]
    By Fujiwara-Gabber and the fact that
    \[
      f^\ast Q_{|U_\crys}=0,
    \]
    we may instead prove the statement that
    \[
      H^i(V,\mathcal{F}^\prime)=0
    \]
    for each $\F_\ell$-sheaf $\mathcal{F}^\prime$ on $V$ with
    \[
      \mathcal{F}^\prime_{|U_\crys}=0.
    \]
    Using that $R\Gamma(V,-)$ commutes with filtered colimits (because $V$ is qcqs) we may replace $U_\crys$ by some open subset $W$ containing it. Then $\mathcal{F}^\prime$ is supported on finitely many points of $V\setminus U_\crys$. The statement becomes clear as each of these points has an algebraically closed residue field by \Cref{lemma_spectrum_of_a_inf}.
    This finishes the proof that
    \[
      R\Gamma(\tilde{V},Q)\leq D^{\leq 1}.
    \]
    By \Cref{sec:decompleting-a_e-1-cohomological-dimension-of-spec-tilde-a-e} below, we see that
    \[
      R\Gamma(\tilde{V},j_\ast(\mathcal{F}))\in D^{\leq 2},
    \]
    and as
    \[
      R\Gamma(\tilde{V},Q)\in D^{\leq 1}
    \] by our results above, we thus obtain
    \[
      R\Gamma(\Spec(\tilde{K}_E),\mathcal{F})=R\Gamma(\tilde{V},Rj_\ast(\mathcal{F}))\in D^{\leq 2}
    \]
    as desired.
  \end{proof}

  We cannot prove a description of $\Spec(\tilde{A}_E)$ similar to \Cref{lemma_spectrum_of_a_inf}, in particular we don't know  if $f\colon V\to \tilde{V}$ is surjective (which would imply $R^ij_\ast(\mathcal{F})=0$ for $i\geq 2$ in the above proof, making the structure of the proof a bit clearer). In a similar vein, if $a\in A_E$ is distinguished, we don't know if there exists a distinguished element $\tilde{a}\in (a)\cap \tilde{A}_E$. The problem is that we don't know if the Teichm\"uller lift
  \[
    [-]\colon \mathcal{O}_C\to A_E
  \]
  has image in $\tilde{A}_E$ (it does however when $\mathrm{char}(E)=p$).
  
\begin{proposition}
  \label{sec:decompleting-a_e-1-cohomological-dimension-of-spec-tilde-a-e}
  For each prime $\ell$ and each \'etale $\F_\ell$-sheaves $\mathcal{F}$ on $\Spec(\tilde{A}_E[1/\pi])$ 
  \[
    R\Gamma_\et(\Spec(\tilde{A}_E[1/\pi]),\mathcal{F})\in D^{\leq 2}.
  \]
  Similarly, for $\Spec(A_E[1/\pi])$.
\end{proposition}
\begin{proof}
  As $\tilde{A}_E$ is henselian along $(\pi)$ by \Cref{sec:decompleting-a_e-3-pi-completion-of-tilde-a-e} we can apply the Fujiwara-Gabber theorem, cf.\ \cite{fujiwara1995theory}, \cite[Corollary 1.18.(2)]{bhatt2018arc}, and conclude
  \[
    R\Gamma_\et(\Spec(\tilde{A}_E[1/\pi]),\mathcal{F})\cong R\Gamma_\et(\Spec(A_E[1/\pi]),\mathcal{F}) 
  \]
  for all \'etale $\F_\ell$-sheaves $\mathcal{F}$ on $\Spec(\tilde{A}_E[1/\pi])$. In the notation we suppressed here the pullback along the natural morphism
  \[
    \Spec(A_E[1/\pi])\to \Spec(\tilde{A}_E[1/\pi])
  \]
  on the RHS.
  Thus it suffices to consider the case of $A_E$.
  Set
  \[
    E_\infty
  \] as the (uncompleted) cyclotomic $\Z_p$-extension of $E$ if $\mathrm{char}(E)=0$, resp.\ the (uncompleted) perfection of $E$ if $E$ has $\mathrm{char}(E)=p$. Then we introduce
  \[
    \tilde{A}_{E_\infty}:=A_E\otimes_{\mathcal{O}_E}\mathcal{O}_{E_\infty}
  \]
  and
  \[
    A_{E_\infty}:=(\tilde{A}_{E_\infty})^\wedge_\pi.
  \]
  The ring $A_{E_\infty}$ is perfectoid, cf.\ \Cref{sec:decompleting-a_e-2-perfectoid-cover}.
  If $\mathrm{char}(E)=0$, then
  \[
    \Spec(\tilde{A}_{E_\infty}[1/\pi])\to \Spec(A_E[1/\pi]) 
  \]
  is a $\Z_p$-torsor (where $\Z_p$ is viewed as a pro-\'etale group scheme).
  If $\mathrm{char}(E)=p$, then the morphism
  \[
    \Spec(\tilde{A}_{E_\infty}[1/\pi])\to \Spec(A_E[1/\pi]) 
  \]
  is a universal homeomorphism.
  By the Fujiwara-Gabber theorem we have again
  \[
    R\Gamma(\Spec(\tilde{A}_{E_\infty}[1/\pi]),\mathcal{F})\cong R\Gamma(\Spec(A_{E_\infty}[1/\pi]),\mathcal{F}) 
  \]
  for each \'etale $\F_\ell$-sheaf $\mathcal{F}$ on $\Spec(\tilde{A}_{E_\infty}[1/\pi])$.
  Now assume that $\ell=p$. The case $\ell=p=\mathrm{char}(E)$ follows again by Artin-Schreier theory for $A_E[1/\pi]$.
  Therefore we may assume additionally that $\mathrm{char}(E)=0$. Note that then $A_{E_\infty}$ is a $p$-torsion free perfectoid ring. By \cite[Theorem 11.1.]{bhatt_scholze_prisms_and_prismatic_cohomology} we get that
  \[
    R\Gamma_\et(\Spec(A_{E_\infty}[1/\pi]),\mathcal{F})\in D^{\leq 1}
  \]
  for each \'etale $\F_p$-sheaf $\mathcal{F}$ on $\Spec(A^\prime_{E_\infty})$. Using that the $p$-cohomological dimension of $\Z_p$ is $1$, we can conclude that as desired
  \[
    R\Gamma(\Spec(A_{E}[1/\pi]),\mathcal{F})\in D^{\leq 2}
  \]
  for each \'etale $\F_p$-sheaf $\mathcal{F}$ on $\Spec(A_{E}[1/\pi])$.
  This finishes the case $\ell=p$, and we pass to the case $\ell\neq p$.
  As before, we want to prove that
  \[
    R\Gamma(\Spec(A_{E}[1/\pi]),\mathcal{F})\in D^{\leq 2}
  \]
  for each \'etale $\F_\ell$-sheaf $\mathcal{F}$ on $\Spec(A_E[1/\pi])$.
  Let us denote by $A^\prime_E$ the ring $A_E$ \textit{equipped with the $\pi$-adic topology}. Similarly, we introduce $A_{E_\infty}^\prime$. By \Cref{sec:gener-affin-comp-1-affinoid-comparison-theorem}
  \[
    R\Gamma_\et(\Spec(A_{E}[1/\pi]),\mathcal{F})\cong R\Gamma_{\et}(\Spd(A^\prime_{E_\infty}[1/\pi],A^\prime_{E}),\mathcal{F}^{\mathrm{ad}})
  \]
  and $\mathcal{F}$ any \'etale $\F_\ell$-sheaf on $\Spec(A_{E_\infty}[1/\pi])$. We claim that 
  \[
    H^i(\Spd(A_E^\prime[1/\pi],A_E^\prime),\mathcal{F})=0
  \]
  for $i>2$ and any \'etale $\F_\ell$-sheaf $\mathcal{F}$ on the spatial diamond \[
    V_E:=\Spd(A_E^\prime[1/\pi],A_E^\prime).
  \]
  By \cite[Proposition 21.11.]{scholze_etale_cohomology_of_diamonds} it suffices to show that
  \[
    \mathrm{dim}(V_E)=1
  \]
  (where the dimension is defined as in \cite[Chapter 21]{scholze_etale_cohomology_of_diamonds}, i.e., the Krull dimension of the underlying spectral space of $V_E$), and
  \[
    \mathrm{cd}_\ell(y)\leq 1
  \]
  for each maximal point $y\in V_E$, cf.\ \cite[Definition 21.10.]{scholze_etale_cohomology_of_diamonds}.
  We may calculate $\mathrm{dim}(V_E)$ via the $\Z_p$-cover
  \[
    g \colon V_{E_\infty}:=\Spd(A_{E_\infty}^\prime[1/\pi],A_{E_\infty}^\prime) \to V_{E},
  \]
  where $V_{E_\infty}$ is a perfectoid space.
  Let $\varpi\in \mathcal{O}_C$ be a pseudo-uniformizer. Then by tilting (which amounts to replacing $E_\infty$ by its tilt) we see that the open subset
  \[
    V_{E_\infty}^{\circ}:=\bigcup\limits_{n\in \N}\{|[\varpi]|\geq |\pi|^n\}\subseteq V_{E_\infty}
  \]
  is the (diamond associated to the) perfection of the punctured open unit disc over $\Spa(C,\mathcal{O}_C)$, and thus of dimension $1$.
  In the following we will use freely that the topological spaces of
  \[
    V_E
  \]
  and
  \[
    \Spa(A^\prime_E[1/\pi],A_E^\prime)
  \]
  coincide, cf.\ \cite[Lemma 15.6.]{scholze_etale_cohomology_of_diamonds}.
  The complement of
  \[
    V_{E}^\circ:=g(V_{E_\infty}^\circ)
  \]
  in $V_{E}$ has three points, first the ``crystalline'' point $y_{\crys}$ which is the image of
  \[
    \Spd(E,\mathcal{O}_E)\subseteq V_{E}
  \]
  (and this is the only point $y\in V_{E_\infty}$ such that $\varpi$ maps to $0$ in the residue field of $y$). On the other hand, there is the image $y_{1,\mathrm{add}}$ of the $\pi$-adic valuation in
  \[
    \Spd((A_E^\prime[1/[\varpi]])^\wedge_\pi[1/\pi], (A_E^\prime[1/[\varpi]])^\wedge_\pi)
  \]
  (the field $(A_E[1/[\varpi]])^\wedge_\pi[1/\pi]$ is $E\otimes_{W(k)}W(C)$ if $\mathrm{char}(E)=0$, and $C((\pi))$ otherwise) under the canonical morphism to $\Spd(A_E^\prime[1/\pi],A_E^\prime)$.
  The point $y_{1,\mathrm{add}}$ is the unique maximal point $y\in V_E$ such that $\varpi$ maps to a unit in the residue field at $y$ (seen as a point in $\Spa(A^\prime_E[1/\pi],A_E^\prime)$). Let us note that $y_{1,\mathrm{add}}$ has one specialization $y_{2,\mathrm{add}}$ in $V_{E}$. Namely, as $\Spa(A^\prime_E[1/\pi],A_E^\prime)$ is analytic specializations in it do not change the residue field. Thus, specializations of $y_{1,\mathrm{add}}$ correspond to valuation rings in $C$ containing $\mathcal{O}_C$. As $\mathcal{O}_C$ is of rank $1$, these valuation rings are $C$ (corresponding to $y_{1,\mathrm{add}}$) and $\mathcal{O}_C$ (corresponding to $y_{2,\mathrm{add}}$). 
  Note that the points $y_{1,\mathrm{add}}, y_{2,\mathrm{add}}$ make the difference between
  \[
    \Spa(A_E,A_E)\setminus V([\varpi])
  \]
  (here $A_E$ carries the $(\pi,[\varpi])$-adic topology)
  and
  \[
    \Spa(A^\prime_E[1/\pi])
  \]
  (here $A_E^\prime$ carries the $\pi$-adic topology).
  Altogether, we see that
  \[
    \mathrm{dim}(V_E)=1.
  \]
  Next we show that each maximal point $y\in V_E$ has $\ell$-cohomological dimension (in the sense of \cite[Definition 21.10]{scholze_etale_cohomology_of_diamonds})
  \[
    \mathrm{cd}_{\ell}(y)\leq 1.
  \]
  Indeed, as $\ell\neq p$ this holds for all points in $V_{E}^\circ$ because the same is true for its $\Z_p$-cover $V_{E_\infty}^\circ$ (by tilting and \cite[Proposition 21.16.]{scholze_etale_cohomology_of_diamonds}). For the remaining maximal points
  \[
    y_{\crys}, y_{1,\mathrm{add}}
  \]
  this holds true as finite extensions of the fields
  \[
    W(L)[1/p]
  \]
  resp.\
  \[
    L((\pi))
  \]
  for any algebraically closed field $L$ of characteristic $p$ are of cohomological dimension $1$.
  As announced we can apply \cite[Proposition 21.11.]{scholze_etale_cohomology_of_diamonds} to
  \[
    V_E
  \]
  and conclude. This finishes the proof.
\end{proof}

Note \cite[Proposition 21.11.]{scholze_etale_cohomology_of_diamonds} holds true for $\ell=p$ as well. However, we needed a different argument in that case as we cannot conclude
\[
  \mathrm{cd}_p(y)\leq 1
\]
(only $\mathrm{cd}_p(y)\leq 2$) for points in $V_E^\circ$ by tilting the $\Z_p$-cover $V_{E_\infty}^\circ$ of $V_E^\circ$. 

In the proof we used the following statement, which appears in \cite[Proposition 13.1.1.]{scholze2020berkeley}, too.

\begin{lemma}
  \label{sec:decompleting-a_e-2-perfectoid-cover}
  The ring
  \[
      A^\prime_{E_\infty}=(\tilde{A}^\prime_{E_\infty})^\wedge_\pi
    \]
    is perfectoid.
  \end{lemma}
  \begin{proof}
    As $A^\prime_{E_\infty}$ is an algebra over the perfectoid ring ${\mathcal{O}_{E_\infty}}^{\wedge}_\pi$ it suffices to see that $A^\prime_{E_\infty}$ is $\pi$-torsion free and that the Frobenius on
    \[
      A^\prime_{E_\infty}/\pi
    \]
    is surjective. But
    \[
           \tilde{A}^\prime_{E_\infty}/\pi\cong \mathcal{O}_C\otimes_{k} \mathcal{O}_{E_\infty}/\pi, 
         \]
         and thus the Frobenius is indeed surjective.
  \end{proof}

  If $\mathrm{char}(E)=p$, then we need the following additional result on the dimension of $\tilde{K}_E$, cf.\ \cite[\S 10.1.]{serre1995cohomologie}.
  Let $F$ be a field of characteristic $p$, set
  \[
    \Omega^i:=\Lambda^i\Omega^1_{F/\Z}
  \]
  and denote by $d\colon \Omega^{i-1}\to \Omega^i$ the exterior derivative. Let
  \[
    \gamma\colon \Omega^i\to \Omega^i/{d\Omega^{i-1}}
  \]
  be the inverse Cartier operator.
  Then set
  \[
    H^{i+1}_p(F):=\mathrm{coker}(\gamma-1\colon \Omega^{i}\to \Omega^{i}/d\Omega^{i-1})
  \]
  for $i\geq 0$ (with $\Omega^{-1}:=\{0\}$).
  To apply Gille's result on Serre's conjecture, cf.\ \cite{gille_cohomologie_galoisienne_des_groupes_quasi_deployes_sur_des_corps_de_dimension_cohomologique_2}, we need the following vanishing result for $H^\ast_p$.

  \begin{proposition}
    \label{sec:decompleting-a_e-1-kato-cohomologie-of-tilde-k-e}
    Assume $\mathrm{char}(E)=p>0$. For each finite separable extension $F$ of $\tilde{K}_E$ we have
    \[
      H^i_p(F)=0
    \]
    for $i\geq 3$.
  \end{proposition}
  \begin{proof}
    Let $E_\infty$ be the perfection of $E$.
    Considering Mal'cev-Neumann series we see that the (uncompleted) tensor product
    \[
      \mathcal{O}_{E_\infty}\otimes_{\mathcal{O}_E} \tilde{A}_E
    \]
    is a perfect ring. Thus,
    \[
      E_\infty\otimes_E \tilde{K}_E
    \]
    is a perfect field, and in fact the perfection of $\tilde{K}_E$. Similarly, as $F$ is a separable extension of $\tilde{K}_{E}$,
    \[
      F_\infty:=E_\infty\otimes_E F
    \]
    is the perfection of $F$ (note that $E$ is integrally closed in $\tilde{K}_E$ as follows from the existence of a morphism $\tilde{A}_E\to E$ of $E$-algebras).
    In particular, we see that each finite separable extension $F$ of $\tilde{K}_E$ satisfies
    \[
      [F:F^p]= p.
    \]
    Now we can apply \Cref{sec:decompleting-a_e-1-kato-cohomology-for-fields-with-p-bases}.
  \end{proof}

  \begin{lemma}
    \label{sec:decompleting-a_e-1-kato-cohomology-for-fields-with-p-bases}
    Let $F$ be a field of characteristic $p$, such that $[F:F^p]=p^r$ for some $r\geq 0$.
    Then $H^i_p(F)=0$ for $i\geq r+2$.
  \end{lemma}
  \begin{proof}
    The field $F$ admits a $p$-basis of $r$ elements. In particular,
    \[
      \Omega^1_{F/\F_p}
    \]
    is free of rank $r$ over $F$, and thus
    \[
      \Omega^j_{F/\F_p}=0
    \]
    for $j\geq r+1$.
    Because
    \[
      H^{i}_p(F)=\mathrm{coker}(\gamma-1\colon \Omega^{i-1}_{F/\F_p}\to \Omega^{i-1}_{F/\F_p}/d\Omega^{i-2}_{F/\F_p}),
    \]
    we obtain
    \[
      H^i_p(F)=0
    \]
    for $i\geq r+2$.
  \end{proof}
  
\section{Generalities on torsors}
\label{section_generalities_on_torsors}

In this section we collect some general facts about torsors we will use later.
The following theorem of Steinberg will be important for us.

\begin{theorem}
\label{theorem_steinbergs_theorem}  
Let $K$ be of dimension $\leq 1$, i.e., for every finite field extension $K^\prime/K$ the Brauer group $\mathrm{Br}(K^\prime)$ vanishes.
Then for every (connected) reductive group $G/K$ the cohomology set $H^1(K,G)=\{1\}$ is trivial.
\end{theorem}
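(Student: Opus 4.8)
The plan is to invoke the structure theory of connected reductive groups together with known vanishing results for the relevant cohomology sets. The statement is a theorem of Steinberg, so strictly speaking one only needs to locate it in the literature (e.g.\ Serre's \emph{Galois Cohomology} or Steinberg's original paper on the conjecture that a perfect field of dimension $\le 1$ is ``$C_1$-like'' for reductive groups); but let me sketch the standard reduction steps, since that is what an expository proof here would present.

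First I would reduce to the case where $G$ is quasi-split. Over a field $K$ of dimension $\le 1$ (in Serre's sense, $\mathrm{cd}(K) \le 1$ together with the Brauer-group condition), every inner form of a quasi-split group is trivial: the obstruction to being quasi-split lies in $H^1(K, G_{\mathrm{ad}}^{\mathrm{qs}})$ twisted appropriately, and by a theorem going back to Steinberg every connected reductive group over such a field is quasi-split. Concretely, one uses that a connected reductive $G$ becomes quasi-split after passing to the inner form attached to a canonically associated class, and one shows that class dies. So assume $G$ is quasi-split. Next I would use the exact sequence coming from $1 \to G_{\mathrm{der}} \to G \to D \to 1$ (or more carefully the isogeny $G_{\mathrm{sc}} \to G_{\mathrm{der}}$ and the torus quotient) to reduce the computation of $H^1(K,G)$ to the cases of a simply connected semisimple group and of a torus. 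For tori one invokes $\mathrm{Br}(K') = 0$ for all finite $K'/K$, which kills $H^1$ of any torus by a standard dévissage (Hilbert 90 plus the Brauer vanishing handles induced tori, and a general torus is a subquotient of a product of induced tori).

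The heart of the matter is then $H^1(K, G) = \{1\}$ for $G$ semisimple simply connected over $K$ of dimension $\le 1$. Here I would appeal directly to Steinberg's theorem (the resolution of a conjecture of Serre in this generality): for a perfect field of cohomological dimension $\le 1$, every semisimple simply connected group has trivial $H^1$. The proof of that input itself proceeds type-by-type via the classification of such groups (outer forms of $\mathrm{SL}_n$ give unitary groups and the statement reduces to triviality of the relevant norm-one elements; $\mathrm{Spin}$ groups reduce to statements about quadratic forms over $K$, which over a $C_1$-type field are isotropic in dimension $\ge 3$; exceptional groups are handled individually), but I would not reproduce this and would instead cite it.

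The main obstacle, and the reason this is stated as a theorem to be cited rather than proved, is precisely that last input: the triviality of $H^1$ for simply connected groups over fields of dimension $\le 1$ is a deep, classification-dependent result (Steinberg, building on Serre's conjecture II-type heuristics in the dimension $\le 1$ range). Everything else — the reduction to quasi-split, the reduction to tori and simply connected groups, and the torus case — is formal dévissage using long exact cohomology sequences and $\mathrm{Br}(K') = 0$. So in the paper the honest thing is to give the reference; I would cite Serre, \emph{Galois Cohomology}, III.2.3 (and Steinberg's paper) and remark that the hypothesis on $K$ is exactly ``$\dim(K) \le 1$'' in Serre's terminology, which for $K$ of characteristic $p$ perfect is equivalent to $\mathrm{cd}_\ell(K) \le 1$ for all $\ell$.
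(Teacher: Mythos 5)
Your proposal is correct and ends up exactly where the paper does: the paper's entire proof is the citation to Serre, \emph{Cohomologie galoisienne}, III.2.3, Th\'eor\`eme $1'$ (with the following Remarques), which is precisely the reference you identify as the honest thing to give. The extra d\'evissage you sketch (reduction to quasi-split, then to tori and simply connected groups) is accurate background but not needed, since the cited theorem already covers arbitrary connected reductive groups directly.
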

\begin{proof}
 This is \cite[Chapitre III.2.3, Th\'eor\`eme $1^\prime$]{serre_cohomologie_galoisienne} (noting \cite[Remarques 1)]{serre_cohomologie_galoisienne} following it).
\end{proof}

For example, fields complete under a discrete valuation whose residue field is algebraically closed are of dimension $1$ (cf.\ \cite[Chapitre II.3.3.c)]{serre_cohomologie_galoisienne}).

We now want to discuss shortly the Beauville-Laszlo glueing for torsors. Thus we consider the following situation.

Let $A$ be a ring and let $f\in A$ be a non-zero divisor.
Let $A_f$ be the localisation of $A$ at $f$ and let $\widehat{A}$ be the $f$-adic completion of $A$.
Moreover, let 
$$
\mathcal{G}\to \Spec(A)
$$ 
be an affine, flat group scheme over $A$. Then we have the following immediate consequence of the Beauville-Laszlo glueing lemma (cf.\ \cite{beauville_laszlo_un_lemme_de_descente}).

In the following, ``torsor'' means ``torsor for the fpqc-topology''.

\begin{lemma}
\label{lemma_beauville_laszlo_for_torsors}
Sending a $\mathcal{G}$-torsor $\mathcal{P}$ on $\Spec(A)$ to 
$$
(\mathcal{P}_1:=\mathcal{P}_{|\Spec(A_f)},\mathcal{P}_2:=\mathcal{P}_{|\Spec(\widehat{A})},\alpha\colon {\mathcal{P}_1}_{|\Spec(\widehat{A}[1/f])}\cong\mathcal{P}_{|\Spec(\widehat{A}[1/f])}\cong {\mathcal{P}_2}_{|\Spec(\widehat{A}[1/f]})
$$
defines an equivalence between the groupoid of $\mathcal{G}$-torsors on $\Spec(A)$ and the category of triples 
$$
(\mathcal{P}_1,\mathcal{P}_2,\alpha)
$$ 
with $\mathcal{P}_1$ a $\mathcal{G}_{|\Spec(A_f)}$-torsor on $\Spec(A_f)$, $\mathcal{P}_2$ a $\mathcal{G}_{|\Spec(\widehat{A})}$-torsor on $\Spec(\widehat{A})$ and $\alpha\colon {\mathcal{P}_1}_{|\Spec(\widehat{A}[1/f])}\cong {\mathcal{P}_2}_{|\Spec(\widehat{A}[1/f])}$ an isomorphism.
\end{lemma}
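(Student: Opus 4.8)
The statement is, as the preamble says, an "immediate consequence" of the Beauville--Laszlo glueing lemma of \cite{beauville_laszlo_un_lemme_de_descente}, so the proof should be a reduction of torsor-glueing to module/algebra-glueing. The plan is to recall the precise form of Beauville--Laszlo: for a ring $A$ and a nonzerodivisor $f\in A$, the category of $A$-modules $M$ that are $f$-torsion free (equivalently the square with corners $A$, $A_f$, $\widehat A$, $\widehat A[1/f]$ is "effective" for such modules) is equivalent to the category of triples $(M_1, M_2, \beta)$ with $M_1$ an $f$-torsion-free $A_f$-module, $M_2$ an $f$-torsion-free $\widehat A$-module, and $\beta\colon M_1\otimes_{A_f}\widehat A[1/f]\cong M_2[1/f]$, with the compatibility that finitely generated projective modules on either side glue to finitely generated projective modules on $\Spec(A)$, and likewise for quasi-coherent algebras. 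Since $\mathcal G\to\Spec(A)$ is affine, a $\mathcal G$-torsor $\mathcal P$ (for the fpqc topology) is representable by an affine scheme over $\Spec(A)$, i.e.\ corresponds to a quasi-coherent sheaf of $\mathcal O$-algebras equipped with a $\mathcal G$-coaction; the same holds after restriction to each of the three rings in the cover. So I would first translate the groupoid of $\mathcal G$-torsors on $\Spec(A)$ (resp.\ on $\Spec(A_f)$, $\Spec(\widehat A)$, $\Spec(\widehat A[1/f])$) into the groupoid of $\mathcal O$-algebras-with-$\mathcal G$-coaction that are fpqc-locally isomorphic to $\mathcal G$ itself.

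\textbf{Key steps.} First, observe that a $\mathcal G$-torsor $\mathcal P$ on $\Spec(A)$ is faithfully flat and affine over $\Spec(A)$ (faithfully flat because it is fpqc-locally isomorphic to $\mathcal G$, which is flat, and surjective; affine because $\mathcal G$ is affine and affineness descends along fpqc covers). Hence $\mathcal P=\Spec(\mathcal B)$ for a quasi-coherent $A$-algebra $\mathcal B$ which is an fppf (in fact fpqc) sheaf, and $\mathcal B$ is $f$-torsion free since it is flat over $A$ and $f$ is a nonzerodivisor. Second, apply Beauville--Laszlo at the level of the $A$-algebra $\mathcal B$: the data $(\mathcal B, \text{coaction})$ is equivalent to $(\mathcal B_f, \mathcal B\otimes_A\widehat A, \text{glueing iso over } \widehat A[1/f], \text{coactions})$, because restriction of the coaction maps and the $\mathcal G$-action morphisms commutes with the flat base changes $A\to A_f$, $A\to \widehat A$, $A\to\widehat A[1/f]$ (here one uses that $\mathcal G$, being affine and flat over $A$, and its structure morphisms, base change correctly, and that Beauville--Laszlo is compatible with tensor products of modules, so it respects the algebra structure and the comodule structure map $\mathcal B\to\mathcal B\otimes_A\mathcal O_{\mathcal G}$). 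Third, check that the glued object on the right-hand side is again a $\mathcal G$-torsor: one needs that "being a $\mathcal G$-torsor" can be detected on the Beauville--Laszlo cover. For this I would note that $\Spec(A_f)$ and $\Spec(\widehat A)$ together cover $\Spec(A)$ \emph{scheme-theoretically} in the sense that $A\to A_f\times\widehat A$ is faithfully flat when $f$ is a nonzerodivisor (this is exactly the content one extracts from Beauville--Laszlo, using $\Spec(A/f)\cong\Spec(\widehat A/f)$), so $\{\Spec(A_f)\to\Spec(A),\ \Spec(\widehat A)\to\Spec(A)\}$ is an fpqc cover; since being a torsor is fpqc-local on the base, the glued triple is a torsor iff $\mathcal P_1$ and $\mathcal P_2$ are. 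Finally, reverse the construction: given $(\mathcal P_1,\mathcal P_2,\alpha)$, form the associated algebras-with-coaction, glue to a quasi-coherent $A$-algebra with $\mathcal G$-coaction via Beauville--Laszlo, take $\Spec$, and verify it is a $\mathcal G$-torsor by the fpqc-locality just mentioned; the two constructions are visibly quasi-inverse, and morphisms of torsors glue the same way since $\Hom$ of torsors is a subfunctor of $\Hom$ of the underlying affine schemes, which glues.

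\textbf{Main obstacle.} The only genuinely nontrivial point is the compatibility of Beauville--Laszlo glueing with the extra structure — the algebra multiplication and the $\mathcal G$-coaction — rather than just the underlying module. The right way to handle this is to phrase Beauville--Laszlo as an equivalence of \emph{symmetric monoidal} categories (on the subcategories of $f$-torsion-free modules, or on all quasi-coherent modules if one is careful), so that algebras and comodules are automatically carried along; the base changes $A\to A_f$, $A\to\widehat A$, $A\to\widehat A[1/f]$ are flat, so tensoring the comodule structure map $\mathcal B\to\mathcal B\otimes_A\mathcal O_{\mathcal G}$ with them behaves correctly and the cosimplicial identities are preserved. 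A secondary, purely bookkeeping, obstacle is making sure that "torsor for the fpqc topology" descends along the two-element cover $\{\Spec(A_f),\Spec(\widehat A)\}$ of $\Spec(A)$ — but this is immediate once one knows that cover is fpqc, which, as noted, is part of the package of \cite{beauville_laszlo_un_lemme_de_descente}. I therefore expect the write-up to be genuinely short, essentially a pointer to Beauville--Laszlo together with the two observations that $\mathcal P$ is affine and flat over $\Spec(A)$ and that being a torsor is fpqc-local on the base.
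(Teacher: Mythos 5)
Your reduction to Beauville--Laszlo at the level of algebras-with-coaction is exactly the paper's approach: the module-level equivalence (restricted to flat, equivalently $f$-torsion-free, modules) respects tensor products and hence carries along the algebra multiplication and the $\mathcal{G}$-coaction. The gap is in your final step, where you verify that the glued object is a torsor by asserting that $A\to A_f\times\widehat{A}$ is faithfully flat, ``as one extracts from Beauville--Laszlo''. This is not part of the Beauville--Laszlo package and is false in general: for a non-noetherian ring $A$ the $f$-adic completion $\widehat{A}$ need not be flat over $A$, and the whole point of \cite{beauville_laszlo_un_lemme_de_descente} is to prove a descent statement \emph{without} that flatness hypothesis (if $\widehat{A}$ were flat over $A$ the lemma would reduce to ordinary fpqc descent). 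The lemma here is stated for an arbitrary ring $A$, and the intended applications involve rings built from the non-noetherian $A_E$, which is precisely the situation where flatness of the completion is not available for free. The identification $A/f^n\cong\widehat{A}/f^n$ only yields joint surjectivity of the two maps on spectra, not flatness, so your appeal to fpqc-locality of the torsor condition on the base does not go through.

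The step can be repaired without invoking any cover of $\Spec(A)$: a faithfully flat affine scheme $X$ over $\Spec(A)$ equipped with a $\mathcal{G}$-action is a torsor for the fpqc topology if and only if the canonical map $\mathcal{G}\times_{\Spec(A)}X\to X\times_{\Spec(A)}X$, $(g,x)\mapsto(gx,x)$, is an isomorphism (then $X\to\Spec(A)$ is itself a trivializing fpqc cover). This condition is a statement about coordinate rings and is therefore detected by the Beauville--Laszlo equivalence; faithful flatness of the glued $X$ follows from flatness of the glued algebra (built into the equivalence) together with surjectivity of $X\to\Spec(A)$, which holds because $\Spec(A_f)$ and the image of $\Spec(\widehat{A})$ cover $\Spec(A)$ set-theoretically. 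This is the route the paper takes, and with that substitution the rest of your argument is fine.
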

\begin{proof}
From \cite{beauville_laszlo_un_lemme_de_descente} one can conclude that the category of flat $A$-modules $M$ is equivalent to the category of triples
$$
(M_f,\widehat{M},\alpha)
$$
with $M_f$ a flat $A_f$-module, $\widehat{M}$ a flat $\widehat{A}$-module and $\alpha\colon M_f\otimes_{A_f} \widehat{A}[1/f]\cong \widehat{M}[1/f]$ an isomorphism. This equivalence respects tensor products and hence induces an equivalence on algebra/coalgebra objects. Moreover, a faithfully flat affine scheme $X$ over $\Spec(A)$ with an action by $\mathcal{G}$ is a $\mathcal{G}$-torsor for the fpqc-topology if and only if the canonical morphism
$$
\mathcal{G}\times_{\Spec(A)}X\to X\times_{\Spec(A)}X,\ (g,x)\mapsto (gx,x)
$$
is an isomorphism. This condition can be phrased in terms of coordinate rings and hence we obtain the lemma.
\end{proof}

If $\mathcal{G}$ is smooth over $\Spec(A)$, then every fpqc-torsor is actually trivial for the \'etale topology and we obtain \Cref{lemma_beauville_laszlo_for_torsors} with ``fpqc'' replaced by ``\'etale''. In fact, if $\mathcal{G}$ is smooth, then every $\mathcal{G}$-torsor $\mathcal{P}$ is smooth and thus admits sections \'etale locally.

For $A=A_E$ (equipped with the $(\pi,[\varpi])$-adic topology for some pseudo-uniformizer $C$) we shortly discuss a comparison for ``algebraic'' and ``adic torsors''. We include this statement only for completeness as it is not needed in the sequel. We recommend \cite[Appendix to lecture XIX]{scholze2020berkeley} for a discussion of torsors over adic spaces.

\begin{proposition}
\label{proposition_comparison_algebraic_adic_torsors}
Let $s\in \Spec(A_E)$ resp.\ $s^\prime\in \mathrm{Spa}(A_E,A_E)$ be the closed point, where $\Spa(A_E,A_E)$ denotes the adic spectrum of $A_E$. Then for every affine smooth group scheme $\mathcal{G}/\mathcal{O}_E$ the groupoids of 
$\mathcal{G}$-torsors on $U:=\Spec(A_E)\setminus\{s\}$ and $\mathcal{G}^{\mathrm{adic}}$-torsors on $\mathcal{U}:=\mathrm{Spa}(A_E,A_E)\setminus\{s^\prime\}$ are canonically equivalent.
\end{proposition}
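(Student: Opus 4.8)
The plan is to deduce this from a general GAGA-type comparison between quasi-coherent sheaves on $\Spec(A_E)$ and on the adic space $\Spa(A_E, A_E)$, restricted to the respective punctured loci, combined with the Beauville--Laszlo style glueing of \Cref{lemma_beauville_laszlo_for_torsors}. First I would set up the analytic picture: the punctured adic spectrum $\mathcal{U}$ admits an explicit covering by the two rational subsets $\{|\pi|\ge|[\varpi]|\neq 0\}$ and $\{|[\varpi]|\ge|\pi|\neq 0\}$, whose intersection is $\{|\pi|=|[\varpi]|\neq 0\}$; the first is (an open subset whose global sections are) $A_E[1/\pi]$-related, the second is the ``$[\varpi]$-adic'' piece. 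Concretely $\mathcal{U}$ is covered by $\mathcal{V} := \Spa(A_E,A_E)\setminus V(\pi)$ together with the locus where $\pi$ is topologically nilpotent, and the latter has ring of global sections the $[\varpi]$-adic completion $\widehat{A_E}$ of $A_E$ (or rather $A_E[1/[\varpi]]$-localizations thereof). The key point is that on the algebraic side $U$ is covered by $\Spec(A_E[1/\pi])$ and $\Spec(A_E/\pi)$-thickenings in exactly the parallel way, so by \Cref{lemma_beauville_laszlo_for_torsors} both categories of torsors are equivalent to the same category of glueing triples.

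Second, I would make this precise by observing that a $\mathcal{G}$-torsor on $U$ is, by \Cref{lemma_beauville_laszlo_for_torsors} applied with $A = A_E$, $f = \pi$ (after noting that $\{s\}= V(\pi,[\varpi])$ and hence $U$ is glued from $\Spec(A_E[1/\pi])$ and $\Spec(\widehat{A_E})\setminus\{s\}$ along $\Spec(\widehat{A_E}[1/\pi])$), the same data as a triple $(\mathcal{P}_1, \mathcal{P}_2, \alpha)$ where $\mathcal{P}_1$ is a $\mathcal{G}$-torsor on $\Spec(A_E[1/\pi]) = V$, $\mathcal{P}_2$ is a $\mathcal{G}$-torsor on $\Spec(\widehat{A_E})\setminus\{s\}$, and $\alpha$ an isomorphism over $\Spec(\widehat{A_E}[1/\pi])$. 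On the adic side, the analogous Beauville--Laszlo glueing (which holds because $\widehat{A_E}$ is the $[\varpi]$-adic completion, $A_E \to A_E[1/\pi]\times\widehat{A_E}$ is the relevant flat cover, and everything in sight is sheafy — here one invokes \cite[Appendix to lecture XIX]{scholze_weinstein_berkeley_new_version} and the sheafiness of these rings) identifies $\mathcal{G}^{\mathrm{adic}}$-torsors on $\mathcal{U}$ with triples of the same shape, where now $\mathcal{P}_2$ lives on the adic punctured spectrum of $\widehat{A_E}$. Finally, since $\widehat{A_E}$ is $[\varpi]$-adically complete and $\Spec(\widehat{A_E})\setminus\{s\}$ is the adic analytic locus, the algebraic and adic categories of $\mathcal{G}$-torsors on this piece agree (the torsor is a smooth affine scheme, so this is a formal-GAGA / Elkik-approximation statement, or more simply follows because any such torsor is already trivial over both — one can reduce to $\mathcal{G}$ smooth affine and use that both $\Spec(\widehat{A_E})\setminus\{s\}$ and its adic avatar have the same finite étale covers and the same line bundles, hence the same $\mathcal{G}$-torsors via a dévissage, or just cite the comparison in the appendix of \cite{scholze_weinstein_berkeley_new_version}). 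Matching the three pieces and the glueing isomorphisms gives the desired equivalence of groupoids, functorially in $\mathcal{G}$.

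The main obstacle I anticipate is making the comparison over the ``$[\varpi]$-adic piece'' $\Spec(\widehat{A_E})\setminus\{s\}$ versus its adic counterpart genuinely precise: one needs that $\widehat{A_E}$ (the $[\varpi]$-completion) is itself a reasonable ring — it is a $[\varpi]$-torsion-free, $[\varpi]$-adically complete ring with $\widehat{A_E}/[\varpi] \cong \mathcal{O}_C/\mathfrak{m}_C[[\pi]]$ or similar, and in particular Noetherian after inverting nothing is false, so one cannot naively cite algebraic GAGA. Instead I would argue that a $\mathcal{G}$-torsor on $\Spec(\widehat{A_E})\setminus\{s\}$ is, again by Beauville--Laszlo (this time along $\pi$ on $\widehat{A_E}$, since $\{s\} = V(\pi)$ inside $\Spec(\widehat{A_E})$ because $[\varpi]$ is already topologically nilpotent there), glued from a torsor on $\Spec(\widehat{A_E}[1/\pi])$ and one on $\Spec(\widehat{A_E}/\pi) = \Spec(k'[[\pi]]\cdots)$ — and iterate until one lands on honestly Noetherian or even complete-local rings where the algebraic/adic comparison is classical. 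In practice the cleanest route, which I would adopt to keep the proof short, is to phrase the whole argument as: both groupoids are the limit of the glueing diagram of torsors over $\Spec(A_E[1/\pi])=V$, $\Spec(\widehat{A_E}[1/\pi])$, and $\Spec(\widehat{A_E})\setminus\{s\}$ — for the algebraic side by \Cref{lemma_beauville_laszlo_for_torsors}, for the adic side by its evident adic analogue — and then note that $V$ and $\widehat{A_E}[1/\pi]$ are already analytic (so algebraic $=$ adic for smooth affine group schemes by a standard argument), while over $\Spec(\widehat{A_E})\setminus\{s\}$ the comparison is \cite[Appendix to lecture XIX]{scholze_weinstein_berkeley_new_version}. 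Thus the only real content to check is the commutativity and compatibility of these two glueing descriptions, which is routine.
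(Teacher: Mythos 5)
Your strategy --- glue torsors on both sides over matching covers and compare piece by piece --- is genuinely different from the paper's, but as written it has a gap exactly where the difficulty of the statement is concentrated. Two technical slips first: $A_E$ is already $\pi$-adically complete, so applying \Cref{lemma_beauville_laszlo_for_torsors} with $f=\pi$ gives nothing (and that lemma describes torsors on $\Spec(A)$, not on the open subscheme $U$); the decomposition you actually want is the Zariski cover $U=\Spec(A_E[1/\pi])\cup\Spec(A_E[1/[\varpi]])$, possibly followed by Beauville--Laszlo along $\pi$ on the second piece, and your text wavers between the $\pi$-adic and $[\varpi]$-adic completions. More seriously, the pieces of the natural adic cover $\mathcal{U}=\{|\pi|\leq|[\varpi]|\neq 0\}\cup\{|[\varpi]|\leq|\pi|\neq 0\}$ are rational subsets whose coordinate rings are completions of algebras over $A_E$, not the algebraic localizations $A_E[1/\pi]$ and $A_E[1/[\varpi]]$; the two covers therefore do not match term by term, and your assertion that ``algebraic $=$ adic for smooth affine group schemes by a standard argument'' on each piece is precisely the GAGA-type statement you are trying to prove. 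No reference you cite supplies it: the appendix to Lecture XIX of Scholze--Weinstein sets up torsors on adic spaces Tannakianly but contains no such algebraic-versus-adic comparison, and already for $\mathcal{G}=\mathrm{GL}_n$ the piecewise comparison is the nontrivial content of the vector-bundle theorems being invoked.

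The paper sidesteps all of this with a Tannakian reduction: a $\mathcal{G}$-torsor on $U$ (resp.\ $\mathcal{U}$) is an exact tensor functor $\mathrm{Rep}_{\mathcal{O}_E}(\mathcal{G})\to\mathrm{Bun}(U)$ (resp.\ $\mathrm{Bun}(\mathcal{U})$), and both $\mathrm{Bun}(U)$ and $\mathrm{Bun}(\mathcal{U})$ are already known to be equivalent to $\mathrm{Bun}(\Spec(A_E))$. The only remaining point --- and it is where the real work lies --- is that the induced equivalence $\mathrm{Bun}(U)\cong\mathrm{Bun}(\mathcal{U})$ respects the exact structures in both directions. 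The paper checks this by hand: in one direction the cokernel of a surjection of bundles on $U$ is killed by a power of $(\pi,[\varpi])$ and hence vanishes on every affinoid of $\mathcal{U}$; in the other direction one shows $H^1(\mathcal{U},-)[1/\pi]=0$ using the rational cover above and kills the remaining $\pi$-torsion by a flat base change. If you insist on your glueing framework you would still need to supply exactly this comparison on each piece, at which point you have reproduced the paper's argument; so the missing idea is the reduction, via the Tannakian formalism, to the exactness of an already-established equivalence of vector bundle categories.
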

\begin{proof}
  By \cite[Theorem 14.2.1.]{scholze2020berkeley} and
  \cite[Lemma 14.2.3.]{scholze2020berkeley}, cf.\ \Cref{lemma-extending-vector-bundles} there are natural equivalences
$$
\mathrm{Bun}(\mathcal{U})\cong\mathrm{Bun}(\Spec(A_E))
$$
and
$$
\mathrm{Bun}(U)\cong \mathrm{Bun}(\Spec(A_E))
$$
(the same proof works if $E$ is of equal characteristic).
Using \cite[Theorem 19.5.2.]{scholze2020berkeley} we therefore have to prove that 
$$
\mathrm{Bun}(\mathcal{U})\cong \mathrm{Bun}(U)
$$
as \textit{exact} categories, namely by \cite[Theorem 19.5.2.]{scholze2020berkeley} the groupoid of $\mathcal{G}$-torsors identifies with the groupoid of fiber functors on $\mathrm{Rep}_{\mathcal{O}_E}(\mathcal{G})$ over $U$ resp.\ $\mathcal{U}$. Let $u:=[\varpi]\in A_E$ be the Teichm\"uller lift of some $\varpi\in \mathfrak{m}_{C}\setminus\{0\}$.
If
$$
0\to \mathcal{M}_1\to \mathcal{M}_2\to \mathcal{M}_3\to 0
$$
is an exact sequence of vector bundles on $U$, then setting $M_i=H^0(U,\mathcal{M}_i)$ we obtain an exact sequence
$$
0\to M_1\to M_2\to M_3\to Q\to 0
$$
of $A_E$-modules where the finitely presented $Q$ is killed by some power of the ideal $(\pi,u)$.
For every affinoid $\Spa(B,B^+)\subseteq \mathcal{U}$ either $\pi$ or $u$ is invertible on $B$. In particular,
$$
\mathrm{Tor}_i^{A_E}(B,Q)=0
$$ 
for $i\geq 0$, which shows that
$$
0\to B\otimes_{A_E}M_1\to B\otimes_{A_E} M_2\to B\otimes_{A_E} M_3\to 0
$$
is exact.
This proves that the functor
$$
\mathrm{Bun}(U)\to \mathrm{Bun}(\mathcal{U})
$$
is exact.
Conversely, assume that
$$
0\to \mathcal{N}_1\to \mathcal{N}_2\to \mathcal{N}_3\to 0
$$
is an exact sequence of vector bundles on $\mathcal{U}$. Let $N_i=H^0(\mathcal{U},\mathcal{N}_i)$ be the associated finite free $A_E$-modules under the equivalence $\mathrm{Bun}(\mathcal{U})\cong \mathrm{Bun}(\Spec(A_E))$ from \cite[Theorem 14.2.1.]{scholze2020berkeley}. 
Set
$$
\begin{matrix}
\mathcal{U}_1:=\{\ |\pi|\leq |u|\neq 0\}\subseteq \mathcal{U} \\
\mathcal{U}_2:=\{\ |u|\leq |\pi|\neq 0\}\subseteq \mathcal{U} \\
\mathcal{U}_{12}:=\mathcal{U}_1\cap \mathcal{U}_2.
\end{matrix}
$$
By definition we obtain a diagram with exact rows and columns
$$
\xymatrix{
 &0\ar[d] &0\ar[d] &0\ar[d] & \\
0\ar[r] &N_1\ar[r]\ar[d] & N_2\ar[r]\ar[d] & N_3\ar[d] &  \\
0\ar[r] &{H^0(\mathcal{U}_1,\mathcal{N}_1) \atop \oplus H^0(\mathcal{U}_2,\mathcal{N}_1)}\ar[r]\ar[d] & {H^0(\mathcal{U}_1,\mathcal{N}_2)\atop \oplus H^0(\mathcal{U}_2,\mathcal{N}_2)}\ar[r]\ar[d] & {H^0(\mathcal{U}_1,\mathcal{N}_3)\atop\oplus H^0(\mathcal{U}_2,\mathcal{N}_3)}\ar[r]\ar[d] & 0 \\
0\ar[r] &H^0(\mathcal{U}_{12},\mathcal{N}_1)\ar[r]\ar[d] & H^0(\mathcal{U}_{12},\mathcal{N}_2)\ar[r]\ar[d] & H^0(\mathcal{U}_{12},\mathcal{N}_3)\ar[r]\ar[d] & 0\\
 &H^1(\mathcal{U},\mathcal{N}_1)\ar[r]\ar[d] &H^1(\mathcal{U},\mathcal{N}_2)\ar[r]\ar[d] & H^1(\mathcal{U},\mathcal{N}_3)\ar[r]\ar[d] & 0\\
& 0 & 0 & 0.
}
$$
After inverting $\pi$ the groups $H^1(\mathcal{U},\mathcal{N}_i)$ vanish. In fact, they are (as in the proof of \cite[Theorem 14.2.1.]{scholze2020berkeley}) given by $H^1(\Spa(\tilde{A}[1/\pi]),\tilde{\mathcal{N}}_i)$ with $\tilde{\mathcal{N}}_i$ a vector bundle on the affinoid adic space 
$$
\Spa(\tilde{A}[1/\pi])=\{\ x\in \Spa(\tilde{A},\tilde{A})\ |\ |\pi(x)|\neq 0 \},
$$ where $\tilde{A}=A_E$ but equipped with the $\pi$-adic topology. Namely, consider the spaces 
$$
\tilde{\mathcal{U}}_1:=\{|\pi|\leq |u|\neq 0\}\subseteq \Spa(\tilde{A}[1/\pi])
$$
and
$$
\tilde{\mathcal{U}}_2:=\{|u|\leq |\pi|\neq 0\}\subseteq \Spa(\tilde{A}[1/\pi]).
$$
Then
$$
\tilde{\mathcal{U}}_2\cong \mathcal{U}_2
$$
and 
$$
H^0(\tilde{\mathcal{U}}_1,\mathcal{O}_{\tilde{\mathcal{U}}_1})\cong H^0(\mathcal{U}_1,\mathcal{O}_{\mathcal{U}_1})[1/\pi]
$$
by \cite[Lemma 14.3.1.]{scholze2020berkeley}.
Moreover,
$$
\tilde{\mathcal{U}}_1\cap\tilde{\mathcal{U}}_2\cong \mathcal{U}_1\cap\mathcal{U}_2.
$$
Thus the modules $H^0(\mathcal{U}_1,\mathcal{N}_i)[1/\pi]$ and $H^0(\mathcal{U}_2,\mathcal{N}_i)$ glue to a vector bundle $\tilde{\mathcal{N}}_i$ on $\Spa(\tilde{A}[1/\pi])$
as
$$
\Spa(\tilde{A}[1/\pi])
$$
is sheafy (cf.\ \cite[Proof of Proposition 13.1.1.]{scholze2020berkeley}). By sheafiness we can thus conclude that 
$$
H^1(\mathcal{U},\mathcal{N}_i)[1/\pi] \cong H^1(\Spa(\tilde{A}[1/\pi]),\tilde{\mathcal{N}}_i) =0
$$ 
(cf.\ \cite[Theorem 5.2.6.]{scholze2020berkeley}).
This implies that the cokernel $Q$ of $N_2\to N_3$ is $\pi$-torsion. Hence, to show that it vanishes on $U$ it suffices to prove that 
$$
Q\otimes_{A_E}R=0
$$
where $R$ is the $\pi$-adic completion of $A_E[1/u]$. But $R$ is flat over $A_E$ and by assumption the sequence
$$
0\to N_1\otimes_{A_E} R\to N_2\otimes_{A_E} R\to N_3\otimes_{A_E} R\to 0
$$
is exact it identifies with the $\pi$-adic completion of the stalk at $(\pi)\in \mathcal{U}$ of the exact sequence
$$
0\to \mathcal{N}_1\to \mathcal{N}_2\to \mathcal{N}_3\to 0.
$$ 
This finishes the proof.
\end{proof}

\section{Generalities on extending torsors}
\label{section_generalities_on_extending_torsors}

We want to draw some consequences of \Cref{lemma-extending-vector-bundles} in a more abstract setup.
For this let $A$ be any ring and let $U\subseteq \Spec(A)$ be a quasi-compact open subset.
We assume that the restriction functor for vector bundles
$$
\mathrm{Bun}(\Spec(A))\xrightarrow{\cong} \mathrm{Bun}(U)
$$
is an equivalence.

For example, $A$ can be $A_E$ (cf.\ \Cref{lemma-extending-vector-bundles}) or a two-dimensional regular local ring (cf.\ \Cref{lemma_vector_bundles_extend_noetherian_case}).
Even under this general assumption, we can conclude that the functor inverse to restriction must send a vector bundle $\mathcal{V}$ on $U$ to the quasi-coherent module on $\Spec(A)$ associated with the $A$-module
$$
H^0(U,\mathcal{V})
$$
because if $\mathcal{V}^\prime$ denotes a preimage of $\mathcal{V}$, i.e., $\mathcal{V}^\prime$ is a vector bundle on $\Spec(A)$ and $\mathcal{V}^\prime_{|U}\cong \mathcal{V}$, then
$$
H^0(U,\mathcal{V})= \mathrm{Hom}_{\mathcal{O}_U}(\mathcal{O}_U,\mathcal{V})\cong
\mathrm{Hom}_{\mathcal{O}_{\Spec(A)}}(\mathcal{O}_{\Spec(A)},\mathcal{V}^\prime(\Spec(A)))\cong \mathcal{V}^\prime(\Spec(A)).
$$
In particular, we obtain that for $\mathcal{V}$ a vector bundle on $U$ the global sections $H^0(U,\mathcal{V})$ are finite locally free over $A$.

\begin{corollary}
\label{corollary:morphisms_to_affine_schemes_extend}
Let $A$ be as above and let $f\colon X\to \Spec(A)$ be an affine morphism and let $g\colon U\to X_U:=X\times_{\Spec(A)}U$ be a section of $X$ over $U$. Then $g$ extends uniquely to a section $g^\prime\colon \Spec(A)\to X$.   
\end{corollary}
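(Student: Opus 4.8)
The plan is to reduce everything to the adjunction between global sections and morphisms into an affine scheme. Since $f\colon X\to\Spec(A)$ is affine, write $X=\Spec(B)$ for an $A$-algebra $B$. For an arbitrary $A$-scheme $Y$, a morphism $Y\to X$ over $\Spec(A)$ is the same datum as an $A$-algebra homomorphism $B\to\mathcal{O}_Y(Y)$ (this holds without any affineness hypothesis on $Y$). Moreover, sections of $X_U\to U$ are, via the universal property of the fiber product, the same as morphisms $U\to X$ over $\Spec(A)$, and sections of $f$ are by definition morphisms $\Spec(A)\to X$ over $\Spec(A)$. So, applying the adjunction with $Y=U$ and with $Y=\Spec(A)$, the corollary becomes the assertion that restriction induces a bijection
$$
\Hom_{A\textrm{-alg}}(B,A)\xrightarrow{\ \cong\ }\Hom_{A\textrm{-alg}}\bigl(B,\mathcal{O}_U(U)\bigr).
$$

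The key input is that the canonical (restriction) map $A=\mathcal{O}_{\Spec(A)}(\Spec(A))\to H^0(U,\mathcal{O}_U)$ is an isomorphism of $A$-algebras. This is exactly the special case $\mathcal{V}=\mathcal{O}_U$ of the observation recorded just before the statement: $\mathcal{O}_U$ is a line bundle on $U$, its preimage under the equivalence $\mathrm{Bun}(\Spec(A))\cong\mathrm{Bun}(U)$ is $\mathcal{O}_{\Spec(A)}$, and the displayed computation there identifies $H^0(U,\mathcal{O}_U)$ with the global sections of this preimage, compatibly with the structure morphisms, hence as $A$-algebras (not merely as $A$-modules).

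Granting this, the proof concludes at once. A section $g\colon U\to X_U$ corresponds to an $A$-algebra homomorphism $\varphi\colon B\to\mathcal{O}_U(U)$; composing with the inverse of the isomorphism $A\xrightarrow{\cong}\mathcal{O}_U(U)$ yields an $A$-algebra map $B\to A$, i.e.\ a morphism $g'\colon\Spec(A)\to X$ over $\Spec(A)$, which is a section of $f$. By construction of the two adjunctions via the same restriction map, $g'_{|U}=g$. Uniqueness is forced by the displayed bijection: any section of $f$ extending $g$ corresponds, under $\mathcal{O}_U(U)\cong A$, to the homomorphism $\varphi$, hence agrees with $g'$.

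I do not expect a genuine obstacle here; the only point requiring care is the identification $H^0(U,\mathcal{O}_U)\cong A$ \emph{as $A$-algebras}, which however is precisely the content of the remark preceding the corollary, together with the (standard, but worth stating since $U$ need not be affine) fact that morphisms from any scheme into an affine scheme are computed by ring homomorphisms out of global sections.
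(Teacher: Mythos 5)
Your proposal is correct and is essentially identical to the paper's proof: both reduce via the adjunction $\Hom_{\Spec(A)}(Y,\Spec(B))\cong\Hom_{A\textrm{-alg}}(B,\mathcal{O}_Y(Y))$ to the identification $\Gamma(U,\mathcal{O}_U)\cong A$ supplied by the remark preceding the corollary. Nothing further is needed.
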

\begin{proof}
This is a consequence of the adjunction for the $\Spec$-functor. Namely write $X=\Spec(B)$ for some $A$-algebra $B$. Then
$$
\begin{array}{ll}
 &\mathrm{Hom}_{\Spec(A)}(U,X) \\
\cong &  \mathrm{Hom}_A(B,\Gamma(U,\mathcal{O}_U))\\
\cong & \mathrm{Hom}_A(B,A)\\
\cong & \mathrm{Hom}_{\Spec(A)}(\Spec(A),\Spec(B))
\end{array}
$$
because $\Gamma(U,\mathcal{O}_U)\cong A$.
\end{proof}

\begin{proposition}
\label{proposition:restricting-flat-schemes-is-fully-faithful}
Let $A$ be as above.
Then the base change functor
$$
\begin{matrix}
\{X\to \Spec(A) \textrm{ affine and flat }\} &\to & \{Y\to U \textrm{ affine and flat }\}\\
X & \mapsto & X\times_{\Spec(A)}U 
\end{matrix}
$$
is fully faithful and its essential image consists of all affine and flat morphisms $g\colon Y\to U$ such that $H^0(Y,\mathcal{O}_Y)$ is a flat $A=H^0(U,\mathcal{O}_U)$-module.
\end{proposition}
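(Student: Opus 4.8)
The plan is to reduce everything to the module-level statement that $H^0(U,-)$ is an inverse to restriction, which we already know for vector bundles and can extend to arbitrary quasi-coherent sheaves that are ``seen'' on $U$. First I would observe that since $X \to \Spec(A)$ is affine, $X = \Spec(B)$ for an $A$-algebra $B$, and similarly $Y = \Spec(C)$ for a $U$-algebra, i.e. a quasi-coherent sheaf of $\mathcal{O}_U$-algebras $\mathcal{C}$ with $C = H^0(U,\mathcal{C})$. The key input is that for \emph{any} quasi-coherent sheaf $\mathcal{F}$ on $U$ whose pushforward $j_\ast \mathcal{F}$ to $\Spec(A)$ (with $j\colon U \hookrightarrow \Spec(A)$) is quasi-coherent and satisfies $(j_\ast\mathcal{F})_{|U} \cong \mathcal{F}$, the assignment $\mathcal{F} \mapsto H^0(U,\mathcal{F})$ is the inverse of restriction; this holds in particular when $\mathcal{F}$ is a filtered colimit of vector bundles, since our hypothesis gives it for vector bundles, $H^0(U,-)$ commutes with filtered colimits on a quasi-compact $U$, and restriction commutes with all colimits. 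Any quasi-coherent $\mathcal{O}_U$-module is such a filtered colimit of its finitely generated subsheaves, but to stay safe I would instead argue directly: for a flat affine $Y \to U$, the sheaf of algebras $\mathcal{C}$ is a flat $\mathcal{O}_U$-module, hence a filtered colimit of finite free $\mathcal{O}_U$-modules by Lazard, and restriction/$H^0(U,-)$ both respect this.

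For full faithfulness: given $X = \Spec(B)$, $X' = \Spec(B')$ flat affine over $\Spec(A)$, a morphism $X_U \to X'_U$ over $U$ is an $\mathcal{O}_U$-algebra map $\widetilde{B'}_{|U} \to \widetilde{B}_{|U}$, equivalently (taking $H^0(U,-)$, using that $B, B'$ are flat $A$-modules so their associated sheaves on $\Spec(A)$ restrict to sheaves on $U$ with the correct global sections by the module-level statement above applied to $B$ and $B'$ themselves) an $A$-algebra map $B' \to B$, equivalently a morphism $X \to X'$ over $\Spec(A)$. Here I use that $H^0(U,-)$ sends $\mathcal{O}_U$-algebra maps to $A$-algebra maps and that the bijection $\Hom_A(B', B) \cong \Hom_{\mathcal{O}_U}(\widetilde{B'}_{|U}, \widetilde{B}_{|U})$ respects the algebra structure, which is formal since it is induced by an equivalence of symmetric monoidal categories (restriction is monoidal, and its inverse $H^0(U,-)$ is therefore lax monoidal, in fact monoidal on the relevant subcategory).

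For the description of the essential image: if $Y \to U$ is affine and flat with $\mathcal{C} := $ (the sheaf of algebras) and $C := H^0(Y,\mathcal{O}_Y) = H^0(U,\mathcal{C})$ a flat $A$-module, I would set $X := \Spec(C)$ and check $X \times_{\Spec(A)} U \cong Y$. Since $C$ is $A$-flat, $X \to \Spec(A)$ is affine and flat; the base change $X_U$ corresponds to the sheaf $\widetilde{C}_{|U}$, and the natural map $\widetilde{C}_{|U} \to \mathcal{C}$ of $\mathcal{O}_U$-algebras is an isomorphism exactly because $\mathcal{C}$ is a flat (hence filtered-colimit-of-free) $\mathcal{O}_U$-module, so the module-level inverse equivalence applies to it. Conversely, any $X$ in the image clearly has $H^0(X\times_{\Spec(A)}U, \mathcal{O}) \cong H^0(\Spec(A), \mathcal{O}_X) = B$ flat over $A$, using $H^0(U,\widetilde{B}_{|U}) \cong B$ again.

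The main obstacle, and the step deserving the most care, is the claim that restriction $\mathrm{Bun}(\Spec A) \to \mathrm{Bun}(U)$ being an equivalence forces, for a flat affine $Y \to U$, the $\mathcal{O}_U$-algebra $\mathcal{C}$ to be ``extendable'', i.e. that $\widetilde{H^0(U,\mathcal{C})}_{|U} \to \mathcal{C}$ is an isomorphism. This is where flatness of $C$ over $A$ (not merely of $\mathcal{C}$ over $\mathcal{O}_U$) is genuinely used: one writes $\mathcal{C} = \varinjlim_i \mathcal{E}_i$ with $\mathcal{E}_i$ finite free over $\mathcal{O}_U$ (Lazard), extends each $\mathcal{E}_i$ uniquely to a finite free $\mathcal{O}_{\Spec A}$-module $\mathcal{F}_i = \widetilde{H^0(U,\mathcal{E}_i)}$, and must check the transition maps and the colimit behave well — here the ambiguity is that the $\mathcal{E}_i$ are only modules, not algebras, and one needs $H^0(U,-)$ to commute with the filtered colimit (true: $U$ is quasi-compact and quasi-separated) and with the (finite) tensor products defining the multiplication (true: these are among the diagrams $H^0$ preserves, as each factor is a vector bundle). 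I expect the cleanest writeup treats $H^0(U,-)$ as a single inverse equivalence on the category $\mathrm{QCoh}^{\mathrm{flat}}(U)$ of flat quasi-coherent $\mathcal{O}_U$-modules onto $\mathrm{QCoh}^{\mathrm{flat}}(\Spec A)$ and notes it is monoidal, reducing both full faithfulness and essential image to this single fact plus the observation that an affine flat scheme over $U$ is the same as a commutative algebra object in $\mathrm{QCoh}^{\mathrm{flat}}(U)$, and similarly over $\Spec(A)$.
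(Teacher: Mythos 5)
Your overall strategy matches the paper's: the inverse functor is $Y\mapsto \Spec(H^0(Y,\mathcal{O}_Y))$, and the computation $H^0(U,\widetilde{B}_{|U})=B$ for $B$ flat over $A$ is done by writing $B$ as a filtered colimit of finite free $A$-modules (Lazard over the \emph{ring} $A$) and using that $H^0(U,-)$ commutes with filtered colimits and recovers a vector bundle's module of global sections; that part, and the full faithfulness you deduce from it, are fine. The genuine problem is in the step you yourself single out as the main obstacle: you justify $\widetilde{H^0(U,\mathcal{C})}_{|U}\xrightarrow{\sim}\mathcal{C}$ by asserting that the flat quasi-coherent $\mathcal{O}_U$-module $\mathcal{C}=g_\ast\mathcal{O}_Y$ is a filtered colimit of finite free $\mathcal{O}_U$-modules ``by Lazard''. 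Lazard's theorem is a statement about modules over a ring; for quasi-coherent sheaves on a non-affine scheme it fails (on $\mathbb{P}^1$ the flat sheaf $\mathcal{O}(1)$ is finitely presented, so were it a filtered colimit of finite free modules it would be a direct summand of one). Worse, here the claim would prove too much: a colimit $\varinjlim\mathcal{O}_U^{n_i}$ has global sections $\varinjlim A^{n_i}$, automatically flat over $A$, so if every flat affine $Y\to U$ had $g_\ast\mathcal{O}_Y$ of this form, the flatness condition on $H^0(Y,\mathcal{O}_Y)$ in the statement would be vacuous and every flat affine scheme over $U$ would extend --- exactly what the surrounding discussion (e.g.\ the non-extending torsors of \Cref{example:non-trivial-torsors-non-parahoric-group-scheme}) rules out.

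The isomorphism you want is nevertheless true, for a much softer reason that needs neither flatness of $\mathcal{C}$ over $\mathcal{O}_U$ nor flatness of $C=H^0(U,\mathcal{C})$ over $A$, and which is what the paper's proof implicitly uses: $j\colon U\hookrightarrow\Spec(A)$ is a quasi-compact open immersion, so $j_\ast\mathcal{C}$ is quasi-coherent on the affine scheme $\Spec(A)$ and therefore equals $\widetilde{H^0(U,\mathcal{C})}=\widetilde{C}$, while $(j_\ast\mathcal{C})_{|U}=\mathcal{C}$ holds for any open immersion; hence $\widetilde{C}_{|U}\cong\mathcal{C}$ unconditionally. The hypothesis that $C$ is flat over $A$ enters only to guarantee that $X:=\Spec(C)\to\Spec(A)$ is flat, and the hypothesis on $A$ (vector bundles extend) enters only in the other composite, $H^0(U,\widetilde{B}_{|U})=B$. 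With the Lazard-on-$U$ step replaced by this observation, your argument closes up and agrees with the paper's.
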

\begin{proof}
We prove that the base change $X\mapsto X\times_{\Spec(A)}U$ induces an equivalence of the categories
$$
\mathcal{C}:=\{X\to \Spec(A) \textrm{ affine and flat}\}
$$ 
and
$$
\mathcal{D}:=\{g\colon Y\to U \textrm{ affine and flat such that } H^0(Y,\mathcal{O}_Y) \textrm{ is flat over } A \}.
$$ 
In fact, for $g\colon Y\to U$ in $\mathcal{D}$ we set
$$
X:=\Spec(H^0(Y,\mathcal{O}_Y))
$$
for the flat $A$-algebra $H^0(Y,\mathcal{O}_Y)$, which defines a functor from $\mathcal{D}$ to $\mathcal{C}$.
For 
$$
f\colon X\to \Spec(A)
$$ 
affine and flat with base change $g\colon X\times_{\Spec(A)}U\to U$ let us write 
$$
f_\ast\mathcal{O}_X=\varinjlim \mathcal{V}_i
$$ 
for vector bundles $\mathcal{V}_i$ on $\Spec(A)$ (using Lazard's theorem \cite[Tag 058G]{stacks_project}) and compute
$$
H^0(U,g_\ast(\mathcal{O}_X))= H^0(U,\varinjlim \mathcal{V}_{i|U})=\varinjlim \mathcal{V}_i(\Spec(A))=\mathcal{O}_X(X)
$$
where we used our assumption on $A$ and that $U$ and $g$ are quasi-compact (and quasi-separated) to commute global sections resp.\ the direct image and filtered colimits.
On the other hand, if we start with 
$$
g\colon Y\to U
$$ 
in $\mathcal{D}$, then we obtain a canonical morphism
$$
Y\to \Spec(H^0(U,g_\ast(\mathcal{O}_Y)))
$$
over $\Spec(A)$ which restricts to the isomorphism (as $g\colon Y\to U$ is affine)
$$
Y\cong \underline{\Spec}(g_\ast(\mathcal{O}_Y))
$$
over $U$. This finishes the proof.
\end{proof}

Note that each affine $U$-scheme $Y$ extends to an affine $\Spec(A)$-scheme by considering its global sections. However, two non-isomorphic, integral affine schemes $X,X^\prime$ over $\Spec(A)$ can have isomorphic restriction to $U$. As a concrete example, let 
$$
A^\prime=k[x,y]
$$ 
be the polynomial ring and consider the $A^\prime$-algebra $$
B^\prime:=k[u,v]
$$ 
with $u=x$ and $v=\frac{y}{x}$ (an affine chart of the blow-up of $A^\prime$ in $(x,y)$). Set $A$ as the localization of $A^\prime$ at $(x,y)$ and $B:=A\otimes_{A^\prime} B^\prime$. Then $\Spec(B)\to \Spec(A)$ is isomorphic to $\Spec(A[1/x])\to \Spec(A)$ when restricted to $U=\Spec(A)\setminus\{(x,y)\}$. Here, $B$ is not flat over $A$ as the dimension of the fibers jumps.
Examples of affine, flat schemes over $U$, which do not extend to affine, \textit{faithfully flat} schemes over $\Spec(A)$ are provided by \Cref{example:non-trivial-torsors-non-parahoric-group-scheme}).

Using \Cref{proposition:restricting-flat-schemes-is-fully-faithful} we arrive at the following descent criterion for extending some affine and flat morphism $Y\to U$ to $\Spec(A)$.

Now let $A\to A^\prime$ be some morphism such that the restriction functor
$$
\mathrm{Bun}(\Spec(A^\prime))\xrightarrow{\cong} \mathrm{Bun}(U^\prime)
$$
of vector bundles from $\Spec(A)$ to $U^\prime:=U\times_{\Spec(A)}\Spec(A^\prime)$ is an equivalence. 

\begin{lemma}
\label{lemma-descent-criterion-for-extending}
With the notations from above assume furthermore that $A^\prime$ is faithfully flat over $A$. Let $Y\to U$ be an affine and faithfully flat morphism and assume that 
$$
Y\times_U U^\prime\to U^\prime
$$ 
extends to some affine faithfully flat $\Spec(A^\prime)$-scheme. Then $Y$ extends to an affine faithfully flat $\Spec(A)$-scheme.  
\end{lemma}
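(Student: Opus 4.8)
The plan is to combine the fully faithfulness statement of \Cref{proposition:restricting-flat-schemes-is-fully-faithful} with faithfully flat descent of quasi-coherent algebras along $A \to A'$. First I would apply the hypothesis: let $X' \to \Spec(A')$ be an affine faithfully flat scheme with $X'\times_{\Spec(A')}U' \cong Y\times_U U'$ over $U'$. Write $Y = \underline{\Spec}(\mathcal{B})$ for a quasi-coherent $\mathcal{O}_U$-algebra $\mathcal{B}$ and $X' = \Spec(B')$ for a flat $A'$-algebra $B'$. The candidate for the extension of $Y$ is $X := \Spec\big(H^0(U, g_\ast \mathcal{O}_Y)\big)$, exactly as in the proof of \Cref{proposition:restricting-flat-schemes-is-fully-faithful}; so the real content is to check that $H^0(U, g_\ast\mathcal{O}_Y)$ is \emph{flat} (indeed faithfully flat) over $A$, since then that proposition immediately produces $X$ with $X\times_{\Spec(A)}U \cong Y$.

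The key step is therefore a flatness check, and I would carry it out by base change to $A'$. Since $A'$ is faithfully flat over $A$ and $U' = U\times_{\Spec(A)}\Spec(A')$ is quasi-compact and quasi-separated, cohomology and pushforward commute with the flat base change $A\to A'$, giving
$$
H^0(U, g_\ast\mathcal{O}_Y)\otimes_A A' \;\cong\; H^0(U', g'_\ast\mathcal{O}_{Y\times_U U'}).
$$
Now $Y\times_U U' \cong X'\times_{\Spec(A')}U'$, and by the computation inside the proof of \Cref{proposition:restricting-flat-schemes-is-fully-faithful} (applied over $A'$, using that the restriction functor $\Bun(\Spec(A'))\xrightarrow{\cong}\Bun(U')$ is an equivalence by assumption, and Lazard's theorem to write the pushforward as a filtered colimit of vector bundles) one has $H^0(U', g'_\ast\mathcal{O}_{Y\times_U U'}) \cong B'$. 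Hence $H^0(U, g_\ast\mathcal{O}_Y)\otimes_A A' \cong B'$ is flat over $A'$, and by faithfully flat descent of flatness along $A\to A'$, the $A$-module $H^0(U, g_\ast\mathcal{O}_Y)$ is flat over $A$; faithful flatness follows because $Y\to U$ is faithfully flat, so $H^0(U,g_\ast\mathcal{O}_Y)\otimes_A A'\cong B'$ is faithfully flat over $A'$, and $\Spec(A)\to \Spec(H^0(U,g_\ast\mathcal{O}_Y))$ being surjective can be checked after the faithfully flat base change to $A'$.

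Finally, with flatness in hand, \Cref{proposition:restricting-flat-schemes-is-fully-faithful} gives an affine flat $A$-scheme $X := \Spec(H^0(U,g_\ast\mathcal{O}_Y))$ together with an isomorphism $X\times_{\Spec(A)}U \cong Y$ over $U$, and $X\to\Spec(A)$ is faithfully flat by the previous paragraph. I expect the main obstacle to be purely bookkeeping: verifying that the base-change isomorphism $H^0(U,g_\ast\mathcal{O}_Y)\otimes_A A' \cong H^0(U',g'_\ast\mathcal{O}_{Y\times_U U'})$ is valid (this needs $U$ quasi-compact and quasi-separated and $A\to A'$ flat, so that $H^0$ and $g_\ast$ commute with $-\otimes_A A'$), and matching the identification $H^0(U',g'_\ast\mathcal{O}_{Y\times_U U'})\cong B'$ with the one coming out of the proof of \Cref{proposition:restricting-flat-schemes-is-fully-faithful} — everything else is a direct descent argument with no genuine difficulty.
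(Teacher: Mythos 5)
Your proposal is correct and follows essentially the same route as the paper: reduce via \Cref{proposition:restricting-flat-schemes-is-fully-faithful} to checking (faithful) flatness of $H^0(Y,\mathcal{O}_Y)$ over $A$, and verify this after the faithfully flat base change $A\to A'$ using flat base change for cohomology on the quasi-compact quasi-separated $U$. The paper states this more tersely, but the content — including the identification of $H^0(U',g'_\ast\mathcal{O}_{Y\times_U U'})$ with $B'$ coming from the proof of that proposition over $A'$ — is the same.
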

\begin{proof}
Indeed, by \Cref{proposition:restricting-flat-schemes-is-fully-faithful} we have to check whether the global sections
$$
H^0(Y,\mathcal{O}_Y)
$$
are faithfully flat over $A$. But this can be checked after the faithfully flat base change $A\to A^\prime$ and over $A^\prime$ it holds true after assumption and \Cref{proposition:restricting-flat-schemes-is-fully-faithful}.
\end{proof}

Now let $\mathcal{G}$ be an affine flat group scheme over $\Spec(A)$.

\begin{lemma}
\label{lemma_fully_faithfulness_of_torsors_general_case}
The restriction functor from $\mathcal{G}$-torsors on $\Spec(A)$ to $\mathcal{G}_{|U}$-torsors on $U$ is fully faithful with essential image given by $\mathcal{G}_{|U}$-torsors $\mathcal{P}$ on $U$ such that the global sections $H^0(\mathcal{P},\mathcal{O}_{\mathcal{P}})$ are faithfully flat over $A$.  
\end{lemma}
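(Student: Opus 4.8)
The plan is to reduce the statement about $\mathcal{G}$-torsors to the already-established statement \Cref{proposition:restricting-flat-schemes-is-fully-faithful} about affine flat schemes over $\Spec(A)$, using the fact that a torsor is in particular such a scheme, together with its extra structure (the $\mathcal{G}$-action and the compatible action/quotient relations encoding the torsor condition). First I would recall that a $\mathcal{G}$-torsor $\mathcal{P}$ on $\Spec(A)$ is an affine faithfully flat $\Spec(A)$-scheme equipped with a $\mathcal{G}$-action such that the map $\mathcal{G}\times_{\Spec(A)}\mathcal{P}\to \mathcal{P}\times_{\Spec(A)}\mathcal{P}$, $(g,x)\mapsto (gx,x)$, is an isomorphism; similarly on $U$. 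Since $\mathcal{P}\to\Spec(A)$ is affine and flat (torsors for a flat affine group scheme are flat, being fpqc-locally isomorphic to $\mathcal{G}$ itself), the case of torsors is genuinely a special case of the setting of \Cref{proposition:restricting-flat-schemes-is-fully-faithful}.

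For full faithfulness, given two $\mathcal{G}$-torsors $\mathcal{P},\mathcal{Q}$ on $\Spec(A)$, I would observe that a morphism of $\mathcal{G}_{|U}$-torsors $\mathcal{P}_{|U}\to\mathcal{Q}_{|U}$ is in particular a morphism of affine flat $U$-schemes, hence by \Cref{proposition:restricting-flat-schemes-is-fully-faithful} extends uniquely to a morphism of affine flat $\Spec(A)$-schemes $\mathcal{P}\to\mathcal{Q}$; one then needs to check that the extension is automatically $\mathcal{G}$-equivariant. This follows because $\mathcal{G}$-equivariance is the assertion that two morphisms $\mathcal{G}\times_{\Spec(A)}\mathcal{P}\to\mathcal{Q}$ (act-then-map, and map-then-act) agree, and both of these are morphisms of affine flat $\Spec(A)$-schemes whose restrictions to $U$ coincide by hypothesis; uniqueness in \Cref{proposition:restricting-flat-schemes-is-fully-faithful} (equivalently, that $H^0(U,-)\cong H^0(\Spec(A),-)$ on the relevant modules) forces them to be equal on all of $\Spec(A)$. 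In fact it is cleanest to package the whole torsor structure — action map, identity, associativity — as a commutative-algebra datum on the coordinate rings and note that $H^0(U,-)$ carries it back bijectively.

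For the essential image, let $\mathcal{P}$ be a $\mathcal{G}_{|U}$-torsor on $U$ with $H^0(\mathcal{P},\mathcal{O}_\mathcal{P})$ faithfully flat over $A$. By \Cref{proposition:restricting-flat-schemes-is-fully-faithful}, $\mathcal{P}$ extends (as an affine flat scheme) to $\widetilde{\mathcal{P}}:=\Spec(H^0(\mathcal{P},\mathcal{O}_\mathcal{P}))\to\Spec(A)$, and this extension is faithfully flat since its coordinate ring is faithfully flat over $A$. The action $\mathcal{G}_{|U}\times_U\mathcal{P}\to\mathcal{P}$ is a morphism of affine flat $U$-schemes (both $\mathcal{G}_{|U}\times_U\mathcal{P}$ and $\mathcal{P}$ have $A$-flat global sections — for the product use that $H^0$ commutes with the relevant fibre product here, $\mathcal{G}$ being flat affine and $\mathcal{P}$ being flat affine, so the global sections are $H^0(\mathcal{G})\otimes_A H^0(\mathcal{P})$ up to completing/localizing, which I should pin down), hence extends to a morphism $\mathcal{G}\times_{\Spec(A)}\widetilde{\mathcal{P}}\to\widetilde{\mathcal{P}}$ over $\Spec(A)$ by full faithfulness; the group-action axioms and the torsor condition (isomorphism of $(g,x)\mapsto(gx,x)$) extend from $U$ to $\Spec(A)$ again by the unique-extension property, since both sides of each identity are morphisms of affine flat $\Spec(A)$-schemes agreeing over $U$, and an extension of an isomorphism is an isomorphism (its inverse extends too, and the composites, being identities over $U$, are identities). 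Conversely, if $\mathcal{P}$ extends to a $\mathcal{G}$-torsor $\widetilde{\mathcal{P}}$ on $\Spec(A)$, then $\widetilde{\mathcal{P}}\to\Spec(A)$ is faithfully flat and affine, so $H^0(\widetilde{\mathcal{P}},\mathcal{O}_{\widetilde{\mathcal{P}}})=H^0(\mathcal{P},\mathcal{O}_\mathcal{P})$ is faithfully flat over $A$, which gives the stated characterization.

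The main obstacle I anticipate is not any single hard step but the bookkeeping: one must check carefully that every piece of the torsor structure (multiplication of $\mathcal{G}$, the action, the shear isomorphism) lives in the category to which \Cref{proposition:restricting-flat-schemes-is-fully-faithful} applies, i.e. that all the relevant fibre products over $U$ again have $A$-flat global sections so that the full faithfulness can be invoked on them. The key technical point is the behaviour of $H^0(U,-)$ on products such as $\mathcal{G}_{|U}\times_U\mathcal{P}$: one wants $H^0(\mathcal{G}_{|U}\times_U\mathcal{P},\mathcal{O})=H^0(\mathcal{G},\mathcal{O}_\mathcal{G})\otimes_A H^0(\mathcal{P},\mathcal{O}_\mathcal{P})$ (or at least something $A$-flat), which follows from flatness of $\mathcal{G}$ over $A$, flatness of $H^0(\mathcal{P},\mathcal{O}_\mathcal{P})$ over $A$, and the assumption $H^0(U,-)\cong H^0(\Spec(A),-)$, but deserves to be spelled out. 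Once that is in place, everything else is a formal consequence of the adjunction $\Gamma\dashv\Spec$ exactly as in the proofs of \Cref{corollary:morphisms_to_affine_schemes_extend} and \Cref{proposition:restricting-flat-schemes-is-fully-faithful}.
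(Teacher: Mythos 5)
Your proposal is correct and follows essentially the same route as the paper: apply \Cref{proposition:restricting-flat-schemes-is-fully-faithful} to the underlying affine faithfully flat schemes, extend the $\mathcal{G}$-action by full faithfulness, and check the torsor (shear-isomorphism) condition after restricting back to $U$. Your worry about $H^0$ of the fibre product resolves itself: since $\mathcal{G}\times_{\Spec(A)}\widetilde{\mathcal{P}}$ is already an affine flat $\Spec(A)$-scheme restricting to $\mathcal{G}_{|U}\times_U\mathcal{P}$, full faithfulness applies directly to that pair of objects without computing global sections of the product over $U$.
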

\begin{proof}
  Each $\mathcal{G}$-torsor on $\Spec(A)$ is represented by some affine, faithfully flat scheme over $\Spec(A)$. Therefore we may apply \Cref{proposition:restricting-flat-schemes-is-fully-faithful} to conclude fully faithfulness of restrictions of $\mathcal{G}$-torsors. Let $\mathcal{P}$ be a $\mathcal{G}_{|U}$-torsor on $U$ such that the global sections of $\mathcal{P}$ are faithfully flat over $A$. By \Cref{proposition:restricting-flat-schemes-is-fully-faithful} the underlying scheme of $\mathcal{P}$ extends to a faithfully flat scheme $\mathcal{P}^\prime$ over $\Spec(A)$. By fully faithfulness of restrictions the $\mathcal{G}_{|U}$-action extends to $\mathcal{P}^\prime$. That $\mathcal{P}^\prime$ is a $\mathcal{G}$-torsor can then again be checked after restricting to $U$. This finishes the proof. 
\end{proof}

In the reductive case $\mathcal{G}$-torsors on $U$ will automatically extend to $\Spec(A)$. We recall the argument of Colliot-Th\'el\`ene and Sansuc (cf.\ \cite[Th\'eor\`eme 6.13]{colliot_thelene_sansuc_fibres_quadratiques_et_composantes_connexes_reelles}) (at least if $A$ is local).

\begin{proposition}
  \label{proposition_torsors_general_split_case}
  Assume that $A$ is local and that $\mathcal{G}$ is a reductive group scheme over $A$.
  Let $\mathcal{P}$ be a $\mathcal{G}_{|U}$-torsor on $U$. Then $\mathcal{P}$-extends (uniquely) to $\Spec(A)$.
\end{proposition}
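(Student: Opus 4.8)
The plan is to reduce the statement to a cohomological vanishing over the base field that is an instance of Steinberg's theorem, then bootstrap from $\mathrm{GL}_n$ (equivalently, vector bundles) to an arbitrary reductive $\mathcal{G}$ via a standard torsor-pushout argument. Concretely, since $A$ is local with residue field, say, $\kappa$, a reductive group scheme $\mathcal{G}/A$ admits a maximal torus and hence (after a finite \'etale, or even Zariski, localization which we may absorb since $A$ is Henselian-like for our purposes, or more simply by using that $\mathcal{G}$ is already split up to an \'etale cover) we can find a faithful representation $\rho\colon\mathcal{G}\hookrightarrow\mathrm{GL}(\mathcal{V})=\mathrm{GL}_{n,A}$ with quotient $\mathrm{GL}_n/\mathcal{G}$ a quasi-affine (in fact affine, when $\mathcal{G}$ is reductive) $A$-scheme. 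Given a $\mathcal{G}_{|U}$-torsor $\mathcal{P}$ on $U$, the pushout $\rho_\ast\mathcal{P}$ is a $\mathrm{GL}_{n,U}$-torsor, i.e. a vector bundle on $U$, which by \Cref{lemma-extending-vector-bundles} is free; equivalently the associated $\mathrm{GL}_n/\mathcal{G}$-valued reduction problem over $U$ is governed by a section of an affine scheme over $U$.

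First I would make the reduction of structure group explicit: the datum of $\mathcal{P}$ together with a trivialization of $\rho_\ast\mathcal{P}$ is the same as a section over $U$ of the affine $A$-scheme $X := \mathrm{GL}_n/\mathcal{G}$ (the fibration of reductions), because $X$ represents the functor of reductions of the trivial $\mathrm{GL}_n$-torsor to $\mathcal{G}$. Since $\rho_\ast\mathcal{P}$ is free on $U$, such a trivialization exists, so $\mathcal{P}$ corresponds to a section $U\to X_U$. Now $X\to\Spec(A)$ is affine, so by \Cref{corollary:morphisms_to_affine_schemes_extend} this section extends uniquely to a section $\Spec(A)\to X$. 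Unwinding, this extended section is precisely a reduction to $\mathcal{G}$ of the trivial $\mathrm{GL}_n$-torsor over all of $\Spec(A)$, i.e. a $\mathcal{G}$-torsor $\widetilde{\mathcal{P}}$ on $\Spec(A)$ equipped with an isomorphism $\rho_\ast\widetilde{\mathcal{P}}\cong\mathcal{O}^{\oplus n}$ and hence an identification $\widetilde{\mathcal{P}}_{|U}\cong\mathcal{P}$ compatible with pushforwards. Uniqueness of the extension follows from \Cref{lemma_fully_faithfulness_of_torsors_general_case} (the restriction functor on $\mathcal{G}$-torsors is fully faithful), and also directly from the uniqueness clause of \Cref{corollary:morphisms_to_affine_schemes_extend}.

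There is one gap in the above that needs care, and I expect it to be the main obstacle: the existence of a faithful representation $\mathcal{G}\hookrightarrow\mathrm{GL}_n$ with affine quotient $\mathrm{GL}_n/\mathcal{G}$, and the verification that the free vector bundle $\rho_\ast\mathcal{P}$ admits a trivialization for which the induced $X$-section exists over $U$ globally (as opposed to only \'etale-locally). For reductive $\mathcal{G}$ over an affine base one knows $\mathrm{GL}_n/\mathcal{G}$ is affine by Haboush/Richardson-type results on geometric reductivity, so the quotient issue is fine; the subtler point is that $\rho_\ast\mathcal{P}$ being free on $U$ is exactly the input \Cref{lemma-extending-vector-bundles} supplies, and freeness of this particular bundle is all one needs to produce the global $U$-section of $X_U$. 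This is the step where the hypothesis on $A$ (via \Cref{lemma-extending-vector-bundles}: all vector bundles on $U$ are free, equivalently extend) does the real work — it is what lets us replace ``\'etale-locally trivial reduction'' by ``globally trivial $\mathrm{GL}_n$-torsor,'' after which the affineness of $X$ and \Cref{corollary:morphisms_to_affine_schemes_extend} finish the argument mechanically. Following Colliot-Th\'el\`ene--Sansuc \cite[Th\'eor\`eme 6.13]{colliot_thelene_sansuc_fibres_quadratiques_et_composantes_connexes_reelles}, one should also note that when $A$ is not assumed local one can still run the argument provided $\mathcal{G}$ admits a faithful representation with affine quotient over $\Spec(A)$, but the local hypothesis is what guarantees this cleanly.
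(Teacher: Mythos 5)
Your proposal is correct and is essentially the paper's own argument (following Colliot-Th\'el\`ene--Sansuc): the paper phrases it via the exact sequence of pointed sets $H^0(U,\mathrm{GL}_n/\mathcal{G})\xrightarrow{\delta}H^1(U,\mathcal{G})\to H^1(U,\mathrm{GL}_n)$, with $\mathrm{GL}_n/\mathcal{G}$ affine by Alper's theorem on geometrically reductive group schemes, $H^1(U,\mathrm{GL}_n)=\{1\}$ since $A$ is local and vector bundles on $U$ extend, and then extends the $H^0$-class via \Cref{corollary:morphisms_to_affine_schemes_extend} --- which is exactly your ``section of the affine quotient extends'' step unwound in cohomological language. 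The only cosmetic difference is that you make the torsor-plus-trivialization dictionary explicit where the paper invokes naturality of the connecting map.
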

\begin{proof}
  By \cite[Corollary 9.7.7.]{alper_adequate_moduli_spaces_and_geometrically_reductive_group_schemes} there exists an embedding $\mathcal{G}\hookrightarrow \mathrm{GL}_n$ such that the quotient $\mathrm{GL}_n/\mathcal{G}$ is affine. Now consider the exact sequence (of cohomology sets taken for the \'etale topology)
  $$
H^0(U,\mathrm{GL}_n/\mathcal{G})\overset{\delta}{\to} H^1(U,\mathcal{G})\to H^1(U,\mathrm{GL}_n).
$$
As we assumed that $A$ is local $H^1(U,\mathrm{GL}_n)=\{1\}$ and thus the class of $\mathcal{P}\in H^1(U,\mathcal{G})$ lies in the image of $\delta$. But as $\mathrm{GL}_n/\mathcal{G}$ is affine
$$
H^0(U,\mathrm{GL}_n/\mathcal{G})=H^0(\Spec(A),\mathrm{GL}_n/\mathcal{G})
$$
by \Cref{corollary:morphisms_to_affine_schemes_extend} and thus the morphism $\delta$ factors as desired over $H^1(\Spec(A),\mathcal{G})$ by naturality of the connecting morphism.
\end{proof}

\section{Extending torsors to $\mathrm{Spec}(A_E)$}
\label{sec:extend-tors-mathrmsp}

In this section we want to prove \Cref{theorem_introduction_main_theorem_a}, i.e., that torsors under parahoric group schemes over $\mathcal{O}_E$ on the punctured spectrum
$$
U_E=\mathrm{Spec}(A_E)\setminus\{s\}
$$
of $A_E$ extend to $\Spec(A_E)$.
We continue to use the notation from \Cref{sec:notations}. In particular, we use the notation $\mathcal{G}/\mathcal{O}_E$ for a parahoric group scheme over $\mathcal{O}_E$ with reductive generic fiber $G/E$.

From our general discussion of extending torsors and \Cref{lemma-extending-vector-bundles} we can conclude that the restriction functor
$$
\{\mathcal{G}\textrm{-torsors on } \Spec(A_E)\}\to \{\mathcal{G}\textrm{-torsors on } U\}
$$
is fully faithful and an equivalence if $\mathcal{G}$ is reductive (cf.\ \Cref{lemma_fully_faithfulness_of_torsors_general_case} and \Cref{proposition_torsors_general_split_case}).

For the moment let us shortly denote $A_E$ by $A_{E,C}$ and fix an extension $C^\prime/C$ of perfect non-archimedean fields over $k$. Set
$$
A_{E,C^\prime}:=W(\mathcal{O}_{C^\prime})\hat{\otimes}_{W(k)}\mathcal{O}_E.
$$
We obtain the following descent statement.

\begin{lemma}
  \label{lemma_descent_along_extension_of_c}
 A $\mathcal{G}$-torsor $\mathcal{P}$ on the punctured spectrum of $\Spec(A_{E,C})$ extends to $\Spec(A_E)$ if the base change of $\mathcal{P}$ to the punctured spectrum of $\Spec(A_{E,C^\prime})$ does.
\end{lemma}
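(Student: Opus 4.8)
The plan is to reduce the claim, via the formalism of \Cref{section_generalities_on_extending_torsors}, to the single fact that the ring map $A_{E,C}\to A_{E,C^\prime}$ is faithfully flat; granting this, everything else is automatic. To establish the faithful flatness, note that both $A_{E,C}$ and $A_{E,C^\prime}$ are $\pi$-torsion free, $\pi$-adically complete (by the unique expansion $\sum_{i\geq 0}[a_i]\pi^i$) and local, with $A_{E,C}/\pi\cong\mathcal{O}_C$, $A_{E,C^\prime}/\pi\cong\mathcal{O}_{C^\prime}$, and the map reduces modulo $\pi$ to the inclusion $\mathcal{O}_C\hookrightarrow\mathcal{O}_{C^\prime}$. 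The latter is flat because $\mathcal{O}_{C^\prime}$, being a domain containing the valuation ring $\mathcal{O}_C$, is torsion free over it; a standard $\pi$-adic d\'evissage (using $\pi$-torsion freeness and $\pi$-adic completeness on both sides) then lifts this to flatness of $A_{E,C^\prime}$ over $A_{E,C}$. Since the Teichm\"uller lifts are functorial, $A_{E,C}\to A_{E,C^\prime}$ is a local homomorphism of local rings, so flatness upgrades to faithful flatness.

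Granting this, I would argue as follows. By \Cref{lemma-extending-vector-bundles} applied with $C^\prime$ in place of $C$, the restriction functor $\Bun(\Spec(A_{E,C^\prime}))\to\Bun(U^\prime)$ is an equivalence, where $U^\prime$ denotes the punctured spectrum of $\Spec(A_{E,C^\prime})$; hence \Cref{lemma_fully_faithfulness_of_torsors_general_case} is available over both $C$ and $C^\prime$. Write $g\colon\mathcal{P}\to U$ also for the affine faithfully flat $U$-scheme representing the torsor $\mathcal{P}$ (an fpqc-torsor under the affine flat group scheme $\mathcal{G}$ is so represented, since affineness and faithful flatness descend along a trivialising cover). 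By \Cref{lemma_fully_faithfulness_of_torsors_general_case} over $C$, it suffices to show that the $A_{E,C}$-algebra $H^0(\mathcal{P},\mathcal{O}_\mathcal{P})=H^0(U,g_\ast\mathcal{O}_\mathcal{P})$ is faithfully flat, and since $A_{E,C}\to A_{E,C^\prime}$ is faithfully flat this may be checked after that base change. As $U$ is quasi-compact and quasi-separated, $g$ is affine, and $A_{E,C}\to A_{E,C^\prime}$ is flat, global sections commute with this flat base change and $g_\ast\mathcal{O}_\mathcal{P}$ pulls back to the structure-sheaf pushforward of $\mathcal{P}\times_U U^\prime$; hence $H^0(\mathcal{P},\mathcal{O}_\mathcal{P})\otimes_{A_{E,C}}A_{E,C^\prime}\cong H^0(\mathcal{P}\times_U U^\prime,\mathcal{O})$.

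Finally, $\mathcal{P}\times_U U^\prime$ is, by hypothesis, the restriction of a $\mathcal{G}$-torsor on $\Spec(A_{E,C^\prime})$, which by \Cref{lemma_fully_faithfulness_of_torsors_general_case} over $C^\prime$ means precisely that $H^0(\mathcal{P}\times_U U^\prime,\mathcal{O})$ is faithfully flat over $A_{E,C^\prime}$. Combined with the isomorphism just displayed, this shows that $H^0(\mathcal{P},\mathcal{O}_\mathcal{P})$ is faithfully flat over $A_{E,C}$, so $\mathcal{P}$ lies in the essential image of restriction and therefore extends to $\Spec(A_E)$. (Equivalently, one could first extend the underlying scheme of $\mathcal{P}$ using \Cref{lemma-descent-criterion-for-extending} and then observe that the $\mathcal{G}$-action descends by full faithfulness of restriction of torsors.) The only genuine obstacle is the faithful flatness of $A_{E,C}\to A_{E,C^\prime}$; once that is in hand, the statement is pure formalism.
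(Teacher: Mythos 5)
Your reduction of the lemma to the single claim that $A_{E,C}\to A_{E,C^\prime}$ is faithfully flat is, \emph{granting that claim}, correct: it is exactly the mechanism of \Cref{lemma_fully_faithfulness_of_torsors_general_case} and \Cref{lemma-descent-criterion-for-extending}, and the flat-base-change and descent-of-flatness steps are fine. The gap is the flatness claim itself. The ``standard $\pi$-adic d\'evissage'' you invoke is the local criterion of flatness, and that criterion needs either the ideal $(\pi)$ to be nilpotent or the base ring to be Noetherian; $A_{E,C}$ is very far from Noetherian (already $A_{E,C}/\pi\cong\mathcal{O}_C$ is a non-discrete, hence non-Noetherian, valuation ring, and $\Spec(A_{E,C})$ has Krull dimension at least $3$). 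Knowing that $A_{E,C^\prime}/\pi^n$ is flat over $A_{E,C}/\pi^n$ for every $n$ and that both rings are $\pi$-adically complete and $\pi$-torsion free does not formally yield flatness of $A_{E,C^\prime}$ over $A_{E,C}$: over a non-Noetherian base, ``$\pi$-completely flat'' does not imply ``flat'', and the $\pi$-adic completion of a flat module need not remain flat. It is telling that the one flatness statement of this shape that the paper does assert, \Cref{lemma_varphi_faithfully_flat}, has the Noetherian source $R_E=\mathcal{O}_E[[z]]$, which is precisely what makes the argument of \cite[Lemma 4.30]{bhatt_morrow_scholze_integral_p_adic_hodge_theory} work; that argument does not transfer when the source is $A_{E,C}$. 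So the crux of your proof is an unproven (and genuinely delicate, even if quite possibly true) piece of non-Noetherian commutative algebra.

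The paper's proof is built to sidestep exactly this. It encodes $\mathcal{P}$ Tannakianly as an exact tensor functor $\omega\colon\Rep_{\mathcal{O}_E}(\mathcal{G})\to\mathrm{Bun}(U)$, so that extending $\mathcal{P}$ amounts to exactness of the induced functor to $\mathrm{Bun}(\Spec(A_{E,C}))$ via $H^0(U,-)$. Left exactness is automatic, and right exactness reduces to the vanishing of the cokernel $Q$ of a map of \emph{finite free} $A_{E,C}$-modules. Since $Q$ is finitely generated, Nakayama reduces this to $Q\otimes_{A_{E,C}}\mathcal{O}_C=0$, which is detected after the base change $\mathcal{O}_C\to\mathcal{O}_{C^\prime}$ --- and \emph{that} map is faithfully flat for the elementary reason that a torsion-free module over a valuation ring is flat. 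Thus only faithful flatness of the residue rings is ever used. Your scheme-theoretic formulation cannot be repaired by the same Nakayama trick, because $H^0(\mathcal{P},\mathcal{O}_{\mathcal{P}})$ is not finitely generated over $A_{E,C}$; so either supply an honest proof that $A_{E,C}\to A_{E,C^\prime}$ is (faithfully) flat, or switch to the Tannakian reduction to finitely generated modules.
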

\begin{proof}
  By \cite[Theorem 19.5.1.]{scholze2020berkeley} the torsor $\mathcal{P}$ defines an exact tensor functor
  $$
\omega\colon \mathrm{Rep}_{\mathcal{O}_E}(\mathcal{G})\to \mathrm{Bun}(U)
$$
from the category $\mathrm{Rep}_{\mathcal{O}_E}(\mathcal{G})$ of representations of $\mathcal{G}$ on finite free $\mathcal{O}_E$-modules to the category of vector bundles on $U$.
By \Cref{lemma-extending-vector-bundles}
$$
\mathrm{Bun}(U)\cong \mathrm{Bun}(\Spec(A_{E,C}))
$$
and thus (by \cite[Theorem 19.5.1.]{scholze2020berkeley} again) it suffices to show that the functor
$$
\widetilde{\omega}\colon \mathrm{Rep}_{\mathcal{O}_E}(\mathcal{G})\to \mathrm{Bun}(\Spec(A_{E,C}))
$$
induced by $\omega$ is exact. As the functor
$$
\mathrm{Bun}(U)\to \mathrm{Bun}(\Spec(A_{E,C})),\ \mathcal{V}\mapsto H^0(U,\mathcal{V})
$$ is left exact it suffices to prove right exactness of $\widetilde{\omega}$. Let $U^\prime$ be the punctured spectrum of $A_{E,C^\prime}$. Then the diagram
$$
\xymatrix{
  \mathrm{Bun}(U)\ar[r]^-{H^0(U,-)} \ar[d]& \mathrm{Bun}(\Spec(A_{E,C})\ar[d] \\
    \mathrm{Bun}(U^\prime)\ar[r]^-{H^0(U^\prime,-)} & \mathrm{Bun}(\Spec(A_{E,C^\prime})
}
$$
commutes because it does when $H^0(U,-)$ resp.\ $H^0(U^\prime,-)$ are replaced by their inverses (which are restriction of vector bundles to $U$ resp.\ $U^\prime$). By the assumption that the base change of $\mathcal{P}$ to $U^\prime$ extends to $\Spec(A_{E,C^\prime})$ we can conclude that the composition
$$
\widetilde{\omega}\otimes_{A_{E,C}} A_{E,C^\prime}\colon \mathrm{Rep}_{\mathcal{O}_E}(\mathcal{G})\to \mathrm{Bun}(\Spec(A_{E,C^\prime}))
$$
is exact.
Let $0\to V_1\to V_2\to V_3\to 0$ be an exact sequence in $\mathrm{Rep}_{\mathcal{O}_E}(\mathcal{G})$ and let $Q$ be the cokernel of $\widetilde{\omega}(V_2)\to \widetilde{\omega}(V_3)$. It suffices to show that $Q=0$. As $Q$ is finitely generated it suffices to show that
$$
Q/\pi Q=Q\otimes_{A_{E,C}}\mathcal{O}_C=0
$$
by Nakayama's lemma. But $\mathcal{O}_C\to \mathcal{O}_{C^\prime}$ is faithfully flat and
$$
(Q\otimes_{A_{E,C}}\mathcal{O}_C)\otimes_{\mathcal{O}_C} \mathcal{O}_{C^\prime}=(Q\otimes_{A_{E,C}}A_{E,C^\prime})\otimes_{A_{E,C^\prime}} \mathcal{O}_{C^\prime}=0
$$
as we know that $Q\otimes_{A_{E,C}}A_{E,C^\prime}=0$.
This finishes the proof.
\end{proof}

Thus from now on we may (and do) assume that $C$ is algebraically closed and even maximally closed as in \Cref{sec:decompleting-a_e}.
By \cite[4.6.20.Remarques]{bruhat_tits_groupes_reductif_sur_un_corps_local_II_schemas_en_groupes} the base change of a parahoric group scheme along an \'etale extension $\mathcal{O}_E\to \mathcal{O}_{E^\prime}$ is again parahoric. The same holds for passing to the completion. Thus from now on we may further assume that $\mathcal{O}_E$ is $\pi$-adically complete and strictly henselian, i.e.\ that $k$ is algebraically closed.
Under these assumptions $E$ is of dimension $1$ (in fact C1, cf.\ \cite{lang_on_quasi_algebraic_closure}) and thus every reductive group over $E$ is automatically quasi-split by Steinberg's theorem (applied to the adjoint quotient).
Moreover, the $\pi$-adic completion of $A_E[1/[\varpi]]$ (for $\varpi\in \mathfrak{m}_C$ non-zero) will be the complete discrete valuation ring
$$
\mathcal{O}_{\mathcal{E}}:=W(C)\hat{\otimes}_{W(k)}\mathcal{O}_E
$$ with algebraically closed residue field $C$. Hence, its fraction field
$$
\mathcal{E}:=W(C)\hat{\otimes}_{W(k)}E
$$
will again be of dimension $1$. This observation, which does not hold over $R_E$, will be crucial as by Steinberg's theorem it implies that every $G$-torsor on $\mathcal{E}$ is trivial (cf.\ \Cref{theorem_steinbergs_theorem}).
As $C$ is algebraically closed the ring $A_E$ is moreover strictly henselian. Hence we can conclude that a $\mathcal{G}$-torsor over $U$ extends to $\Spec(A_E)$ if and only if it is trivial.
Let us start the question on extending torsors by clarifying the assumption that $\mathcal{G}$ is parahoric and not some arbitrary affine smooth model of $G$.

\begin{proposition}
\label{proposition_double_cosets_parahoric}
The double coset space
$$
\mathcal{G}(\mathcal{O}_{\mathcal{E}}) \backslash G(\mathcal{E})/G(A_E[1/\pi])=\{1\}
$$ 
is trivial.
\end{proposition}
\begin{proof}
  The argument in \cite[Proposition 1.4.3. Step 3]{kisin_pappas_integral_models_of_shimura_varieties_with_parahoric_level_structure} works in our situation, however using affine Grassmannians (or in different terminology affine flag varieties) we can give a simpler and more conceptual argument. For this consider the affine Grassmannian $\mathrm{Gr}_{\mathcal{G}}$ of $\mathcal{G}$, i.e., the (\'etale) sheafification of the presheaf
  $$
R\mapsto G(W(R)\hat{\otimes}_{W(k)}E)/\mathcal{G}(W(R)\hat{\otimes}_{W(k)}\mathcal{O}_E)
$$
on the category of perfect $k$-algebras. As $\mathcal{G}$ is parahoric the sheaf $\mathrm{Gr}_{\mathcal{G}}$ is represented by an ind-perfectly proper (strict) ind-scheme (cf.\ \cite[(1.4.2.)]{zhu_affine_grassmannians_and_the_geometric_satake_in_mixed_characteristic} resp. \cite[Corollary 1.3]{richarz_affine_grassmannians_and_geometric_satake_equivalences}). In particular it satisfies the valuative criterion for properness. Moreover, as $C$ and $\mathcal{O}_C$ are strictly henselian
$$
\mathrm{Gr}_\mathcal{G}(C)=G(\mathcal{E})/\mathcal{G}(\mathcal{O}_{\mathcal{E}})
$$
resp.\
$$
\mathrm{Gr}_\mathcal{G}(\mathcal{O}_C)=G(A_E[1/\pi])/\mathcal{G}(A_E).
$$
The claim follows from applying the valuative criterion for properness to $\mathrm{Gr}_{\mathcal{G}}$:
$$
\mathrm{Gr}_\mathcal{G}(C)=\mathrm{Gr}_\mathcal{G}(\mathcal{O}_C).
$$
\end{proof}

From \Cref{proposition_double_cosets_parahoric} we can conclude the following useful criterion.

\begin{corollary}
\label{coro:criterion-extending-torsors-parahoric}
Let $\mathcal{P}$ be a $\mathcal{G}$-torsor over $U$. Then $\mathcal{P}$ extends to $\Spec(A_E)$ if and only if the $G$-torsor 
$$
\mathcal{P}_{|V}
$$ 
over $V=\Spec(A_E[1/\pi])$ is trivial.
Moreover, every $\mathcal{G}$-torsor on $U$ extends to $\Spec(A_E)$ if and only if 
$$
H^1(V,G)=\{1\}.
$$  
\end{corollary}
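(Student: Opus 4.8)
The plan is to describe $\mathcal{G}$-torsors on $U$ by Beauville--Laszlo glueing for the open cover $U = V \cup U'$, where
$$
U' := \Spec(A_E[1/[\varpi]])
$$
for a fixed non-zero $\varpi \in \mathfrak{m}_C$, and then to feed in the triviality of the double coset space from \Cref{proposition_double_cosets_parahoric}. First one checks that $V$ and $U'$ do cover $U$: a prime of $A_E$ containing both $\pi$ and $[\varpi]$ lies in $V(\pi) = \Spec(\mathcal{O}_C)$ and reduces into $\mathfrak{m}_C$, hence equals $\mathfrak{m}$; so $V \cup U' = \Spec(A_E)\setminus\{s\} = U$, and $V \cap U' = \Spec(A_E[1/(\pi[\varpi])])$. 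Recall also from the reductions already in force that $A_E$ is strictly henselian and that the $\pi$-adic completion of $A_E[1/[\varpi]]$ is the strictly henselian complete discrete valuation ring $\mathcal{O}_{\mathcal{E}}$ with fraction field $\mathcal{E} = \mathcal{O}_{\mathcal{E}}[1/\pi]$; since $\mathcal{G}$ is smooth, every $\mathcal{G}$-torsor over $\Spec(A_E)$ resp.\ over $\Spec(\mathcal{O}_{\mathcal{E}})$ is trivial. By \Cref{lemma_fully_faithfulness_of_torsors_general_case} it follows that a $\mathcal{G}$-torsor on $U$ extends to $\Spec(A_E)$ if and only if it is trivial, the extension being unique. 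This already gives the ``only if'' part of the first equivalence: an extension of $\mathcal{P}$ is trivial, hence so are $\mathcal{P}$ and $\mathcal{P}_{|V}$.

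For the ``if'' direction, assume $\mathcal{P}_{|V}$ is trivial and fix a trivialization $t_V$. Applying \Cref{lemma_beauville_laszlo_for_torsors} to the ring $A_E[1/[\varpi]]$ and $f = \pi$ (so that $A_E[1/[\varpi]][1/\pi] = A_E[1/(\pi[\varpi])]$, the $\pi$-adic completion is $\mathcal{O}_{\mathcal{E}}$, and $\mathcal{O}_{\mathcal{E}}[1/\pi] = \mathcal{E}$), the torsor $\mathcal{P}_{|U'}$ is given by a triple $(\mathcal{P}_{|V\cap U'},\, \mathcal{P}_{|\Spec(\mathcal{O}_{\mathcal{E}})},\, \alpha)$ with glueing $\alpha$ over $\Spec(\mathcal{E})$. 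Here $\mathcal{P}_{|V\cap U'} = (\mathcal{P}_{|V})_{|V\cap U'}$ is trivial, via $t_V$, and $\mathcal{P}_{|\Spec(\mathcal{O}_{\mathcal{E}})}$ is trivial; fixing trivializations, $\alpha$ corresponds to an element $\beta \in G(\mathcal{E})$. By \Cref{proposition_double_cosets_parahoric} one may write $\beta = k \cdot g$ with $k \in \mathcal{G}(\mathcal{O}_{\mathcal{E}})$ and $g$ the image in $G(\mathcal{E})$ of some element of $G(A_E[1/\pi])$. Replacing $t_V$ by the corresponding $G(A_E[1/\pi])$-translate changes the glueing element so that it now lies in $\mathcal{G}(\mathcal{O}_{\mathcal{E}})$, and a matching change of the chosen trivialization of $\mathcal{P}_{|\Spec(\mathcal{O}_{\mathcal{E}})}$ then yields a trivialization $t_{U'}$ of $\mathcal{P}_{|U'}$ whose restriction to $V\cap U'$ agrees with $t_V$. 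Glueing $t_V$ and $t_{U'}$ over $U = V \cup U'$ trivializes $\mathcal{P}$, so $\mathcal{P}$ is trivial and extends to $\Spec(A_E)$.

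For the final assertion, the ``if'' direction is immediate from the first part. For the converse it suffices to show every $G$-torsor $\mathcal{Q}$ on $V$ is the restriction of a $\mathcal{G}$-torsor on $U$: by Steinberg's theorem (\Cref{theorem_steinbergs_theorem}), applicable since $\mathcal{E}$ has dimension $\le 1$, the restriction $\mathcal{Q}_{|\Spec(\mathcal{E})}$ is trivial, so \Cref{lemma_beauville_laszlo_for_torsors} for $A_E[1/[\varpi]]$ and $f = \pi$ allows us to glue $\mathcal{Q}_{|V\cap U'}$ to the trivial $\mathcal{G}$-torsor on $\Spec(\mathcal{O}_{\mathcal{E}})$ and thereby extend $\mathcal{Q}_{|V\cap U'}$ to a $\mathcal{G}$-torsor on $U'$; this extension restricts to $\mathcal{Q}$ over $V\cap U'$, so it glues with $\mathcal{Q}$ to a $\mathcal{G}$-torsor on $U$ restricting to $\mathcal{Q}$ on $V$. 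Hence if every $\mathcal{G}$-torsor on $U$ extends to $\Spec(A_E)$, then every $G$-torsor on $V$ is the restriction of such a torsor, which is trivial, and is therefore itself trivial; that is, $H^1(V,G) = \{1\}$.

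The delicate step is the ``if'' direction of the first equivalence. Knowing that $\mathcal{P}_{|V}$ and $\mathcal{P}_{|U'}$ are separately trivial does not by itself trivialize $\mathcal{P}$ on $U$: that would require a factorization $G(A_E[1/(\pi[\varpi])]) = G(A_E[1/\pi]) \cdot G(A_E[1/[\varpi]])$ of the glueing cocycle over $V\cap U'$, which is not clear over the ring $A_E$ itself. The point of the argument above is to pass to the completion $\mathcal{O}_{\mathcal{E}}$, where the needed factorization $G(\mathcal{E}) = \mathcal{G}(\mathcal{O}_{\mathcal{E}}) \cdot G(A_E[1/\pi])$ does hold by \Cref{proposition_double_cosets_parahoric}, and to absorb the residual $G(A_E[1/\pi])$-ambiguity into the choice of the trivialization $t_V$ before glueing.
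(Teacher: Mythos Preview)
Your proof is correct and follows the same strategy as the paper: use Beauville--Laszlo glueing (for the cover $V \cup \Spec(A_E[1/[\varpi]])$, completing the latter $\pi$-adically to $\mathcal{O}_{\mathcal{E}}$) together with the triviality of the double coset space from \Cref{proposition_double_cosets_parahoric} for the first assertion, and Steinberg's theorem over $\mathcal{E}$ to extend $G$-torsors from $V$ to $U$ for the second. Your write-up is more explicit about the Zariski cover and the manipulation of the glueing cocycle than the paper's terse version, but the argument is the same.
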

\begin{proof}
By Beauville-Laszlo glueing (cf.\ \cite{beauville_laszlo_un_lemme_de_descente} resp.\ \Cref{lemma_beauville_laszlo_for_torsors}) there is a bijection of isomorphism classes of $\mathcal{G}$-torsors $\mathcal{P}^\prime$ on $U$ which are trivial on $\Spec(A_E[1/\pi])$ and $\Spec(\mathcal{O}_{\mathcal{E}})$ with the double cosets
$$
\mathcal{G}(\mathcal{O}_{\mathcal{E}}) \backslash G(\mathcal{E})/G(A_E[1/\pi]).
$$ 
But $\mathcal{O}_{\mathcal{E}}$ is strictly henselian which implies that every $\mathcal{G}$-torsor over $\Spec(\mathcal{O}_{\mathcal{E}})$ is trivial as $\mathcal{G}$ is smooth. With \Cref{proposition_double_cosets_parahoric} we can conclude the first assertion. Let us prove the second.
If 
$$
H^1(V, G)=\{1\},
$$ 
then by what we have shown so far, every $\mathcal{G}$-torsor on $U$ extends to $\Spec(A_E)$, i.e.\ is trivial. Conversely, let $\mathcal{P}^\prime$ be a $G$-torsor on $V$. As the field $\mathcal{E}$ is of dimension $1$ the base change of $\mathcal{P}^\prime$ to $\Spec(\mathcal{E})$ is trivial by Steinberg's theorem (cf.\ \Cref{theorem_steinbergs_theorem}). In particular, using Beauville-Laszlo again (cf.\ \Cref{lemma_beauville_laszlo_for_torsors}), we can extend $\mathcal{P}^\prime$ to a $\mathcal{G}$-torsor $\mathcal{P}$ on $U$. By assumption the $\mathcal{G}$-torsor $\mathcal{P}$ extends to $\Spec(A_E)$ and is thus trivial as we assumed that $C$ (and thus its residue field $k^\prime$) is algebraically closed. In particular, $\mathcal{P}^\prime=\mathcal{P}_{|V}$ is trivial. 
This finishes the proof.  
\end{proof}

We now provide an example showing that \Cref{proposition_double_cosets_parahoric} fails in the simplest case if $\mathcal{G}$ is not assumed to be parahoric.

\begin{example}
\label{example:non-trivial-torsors-non-parahoric-group-scheme}
Let $G=\mathbb{G}_{m,E}$ be the multiplicative group and let $\mathcal{G}$ be the smooth model of $G$ over $\mathcal{O}_E$ such that $\mathcal{G}(\mathcal{O}_E)\subseteq \mathcal{O}_E^\times$ is the subgroup of one-units 
$$
\mathcal{G}(\mathcal{O}_E)=\{a\in \mathcal{O}_E^\times\ |\ a\equiv 1 \textrm{ mod }\pi\}
$$ 
(the group scheme $\mathcal{G}$ can be constructed as the dilatation of $\mathbb{G}_{m,\mathcal{O}_E}$ along the unit section of the special fiber). Then
$$
\mathcal{G}(\mathcal{O}_{\mathcal{E}})\backslash G(\mathcal{E})/G(A_{E}[1/\pi])\neq \{1\}.
$$
In fact, 
$$
\mathcal{G}(\mathcal{O}_{\mathcal{E}})\backslash G(\mathcal{E})\cong \pi^\Z\times C^{\times}
$$
and the image of $G(A_E[1/\pi])$ in $\pi^\Z\times C^{\times}$ is given by
$$
\pi^\Z\times \mathcal{O}_{C}^\times.
$$
Thus we obtain a bijection
$$
\mathcal{G}(\mathcal{O}_{\mathcal{E}})\backslash G(\mathcal{E})/G(A_E[1/\pi])\cong C^{\times}/\mathcal{O}_{C}^\times\neq \{1\}.
$$
In particular, we can conclude that there exist $\mathcal{G}$-torsors over $U$ which do not extend to $\Spec(A_E)$ but are trivial on $V$.
\end{example}

The following corollary will be needed.

\begin{corollary}
\label{corollary_h1_trivial_for_split_groups}
Assume that $G$ is split. Then
$$
H^1(V,G)=\{1\}.
$$
\end{corollary}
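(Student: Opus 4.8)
The plan is to deduce this from the already-settled case of the full punctured spectrum $U=\Spec(A_E)\setminus\{s\}$, passing from $U$ to $V$ by means of \Cref{coro:criterion-extending-torsors-parahoric}.

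Since $G$ is split, I would first fix a split reductive model over $\mathcal{O}_E$: let $\underline{G}$ be the base change to $\mathcal{O}_E$ of the Chevalley group scheme over $\Z$ attached to the (split) root datum of $G$, so that $\underline{G}$ is a reductive group scheme over $\mathcal{O}_E$ with generic fiber $G$. It is important to record that $\underline{G}$ is a parahoric group scheme --- it is the hyperspecial parahoric model of $G$ --- so that \Cref{coro:criterion-extending-torsors-parahoric} is available for it.

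The main input is that $H^1(U,\underline{G})=\{1\}$. The base change $\underline{G}_{A_E}:=\underline{G}\otimes_{\mathcal{O}_E}A_E$ is a reductive group scheme over the \emph{local} ring $A_E$, so by \Cref{proposition_torsors_general_split_case} together with \Cref{lemma_fully_faithfulness_of_torsors_general_case} restriction is an equivalence between $\underline{G}_{A_E}$-torsors on $\Spec(A_E)$ and $\underline{G}_{A_E}$-torsors on $U$. Since $A_E$ is strictly henselian and $\underline{G}$ is smooth, every $\underline{G}_{A_E}$-torsor on $\Spec(A_E)$ is trivial; hence every $\underline{G}$-torsor on $U$ is trivial, and in particular extends to $\Spec(A_E)$.

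Finally I would invoke the second assertion of \Cref{coro:criterion-extending-torsors-parahoric} for the parahoric group scheme $\underline{G}$: since every $\underline{G}$-torsor on $U$ extends to $\Spec(A_E)$, we get $H^1(V,G)=\{1\}$, using that $V$ is an $E$-scheme so that $\underline{G}_{|V}=G_{|V}$. The step carrying the weight here is this transfer from $U$ to $V$, which rests on \Cref{proposition_double_cosets_parahoric} and thus genuinely uses the parahoric hypothesis; splitness of $G$ itself enters only through the existence of the reductive --- hence parahoric --- model $\underline{G}$ and through the Colliot-Th\'el\`ene--Sansuc argument of \Cref{proposition_torsors_general_split_case} underlying the vanishing of torsor cohomology on the strictly henselian $\Spec(A_E)$.
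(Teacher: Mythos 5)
Your argument is correct and is essentially the paper's own proof: take the split reductive (hence hyperspecial parahoric) model, note via the Colliot-Th\'el\`ene--Sansuc argument of \Cref{proposition_torsors_general_split_case} that its torsors on $U$ extend to $\Spec(A_E)$, and then apply the second assertion of \Cref{coro:criterion-extending-torsors-parahoric} to conclude $H^1(V,G)=\{1\}$. Your added remarks (that the reductive model is parahoric so the criterion applies, and that triviality on $U$ follows from $A_E$ being strictly henselian) only make explicit what the paper leaves implicit.
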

\begin{proof}
Let $\mathcal{G}$ be the split reductive model of $G$ over $\mathcal{O}_E$.
By \Cref{proposition_torsors_general_split_case} we know that $\mathcal{G}$-torsors on $U$ extend to $\Spec(A_E)$. Using \Cref{coro:criterion-extending-torsors-parahoric} we can conclude.  
\end{proof}

Note that we can conclude that if $G$ is split and $\mathcal{G}$ \textit{any} parahoric model of $G$, then each $\mathcal{G}$-torsor on $U$ extends to $\Spec(A_E)$.

The case $G=\mathrm{PGL}_n$ will be of particular use, when we discuss tori.
Note that \Cref{corollary_h1_trivial_for_split_groups} is wrong for the $2$-dimensional regular local noetherian ring $R_E=\mathcal{O}_E[[z]]$ in this case.

\begin{proposition}
\label{proposition_cohomology_of_tori}
Let $T$ be a torus over $E$. Then
$$
H^1(V,T)=0
$$
and the torsion subgroup 
$$
H^2(V,T)_{\mathrm{tor}}=0
$$
of $H^2(V,T)$ is trivial.
In particular (cf.\ \Cref{coro:criterion-extending-torsors-parahoric}), every $\mathcal{T}^\circ$-torsor over $U$ under the unique parahoric model $\mathcal{T}^\circ$ of $T$ is trivial.
\end{proposition}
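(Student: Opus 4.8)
The plan is to compute $H^1(V,T)$ and $H^2(V,T)$ by Galois descent along a splitting field of $T$, the whole argument resting on the vanishing of $\mathrm{Pic}$ and $\mathrm{Br}$ of $V$ and of its finite covers.

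First I would dispose of the case $T=\Gm$. The vanishing $H^1(V,\Gm)=\mathrm{Pic}(V)=0$ is \Cref{propositon-locally-free-sheaves-after-inverting-p} (and the remark following it). For $H^2(V,\Gm)_{\mathrm{tor}}$ I would invoke Gabber's theorem: for a quasi-compact quasi-separated scheme with an ample line bundle, in particular for the affine scheme $V$, the Brauer group equals $H^2_{\et}(-,\Gm)_{\mathrm{tor}}$; so it suffices to see $\mathrm{Br}(V)=0$. But an Azumaya $\mathcal{O}_V$-algebra of degree $n$ is a twisted form of $M_n(\mathcal{O}_V)$, hence (Skolem--Noether) is classified by $H^1_{\et}(V,\mathrm{PGL}_n)$, which is trivial by \Cref{corollary_h1_trivial_for_split_groups} since $\mathrm{PGL}_n$ is split (cf.\ also \Cref{proposition_torsors_general_split_case}); as every vector bundle on $V$ is free (\Cref{propositon-locally-free-sheaves-after-inverting-p}), such an algebra is isomorphic to $M_n(\mathcal{O}_V)$ and therefore split. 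The same reasoning, applied over any finite extension $E'/E$ (to which the running hypotheses of this section carry over), gives $\mathrm{Pic}(V_{E'})=\mathrm{Br}(V_{E'})=0$ for all finite $E'/E$.

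Next I would run the Hochschild--Serre spectral sequence. Choose a finite Galois splitting field $L/E$ of $T$, let $\Gamma=\Gal(L/E)$, and note $T_L\cong\Gm^{\dim T}$, so $H^1(V_L,T_L)=\mathrm{Pic}(V_L)^{\dim T}=0$ while $H^2(V_L,T_L)=H^2(V_L,\Gm)^{\dim T}$ is torsion-free, its torsion being $\mathrm{Br}(V_L)=0$. Feeding this into $H^p(\Gamma,H^q(V_L,T_L))\Rightarrow H^{p+q}(V,T)$ collapses the relevant corner and yields $H^1(V,T)=H^1(\Gamma,M)$ and $H^2(V,T)_{\mathrm{tor}}=H^2(\Gamma,M)$, where $M:=H^0(V_L,T_L)=\mathcal{O}(V_L)^\times\otimes_{\Z}X_\ast(T)$ with the diagonal $\Gamma$-action. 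It therefore remains to show $H^1(\Gamma,M)=H^2(\Gamma,M)=0$. Applying the previous paragraph with $T$ replaced by $\Gm$ and $E$ by $L^{\Gamma'}$, for every subgroup $\Gamma'\le\Gamma$, gives $H^1(\Gamma',\mathcal{O}(V_L)^\times)\hookrightarrow\mathrm{Pic}(V_{L^{\Gamma'}})=0$ and, since it is torsion, $H^2(\Gamma',\mathcal{O}(V_L)^\times)\hookrightarrow\mathrm{Br}(V_{L^{\Gamma'}})=0$. Vanishing in two consecutive degrees on all Sylow subgroups shows, by the Nakayama--Tate criterion, that $\mathcal{O}(V_L)^\times$ is a cohomologically trivial $\Gamma$-module, i.e.\ it has finite projective dimension over $\Z[\Gamma]$; tensoring a finite $\Z[\Gamma]$-projective resolution of it over $\Z$ with the $\Z$-free lattice $X_\ast(T)$, and using $\Z[\Gamma]\otimes_{\Z}X_\ast(T)\cong\Z[\Gamma]^{\dim T}$, keeps it a finite projective resolution, so $M$ is cohomologically trivial as well and $H^1(\Gamma,M)=H^2(\Gamma,M)=0$. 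Finally, $H^1(V,T)=\{1\}$ together with \Cref{coro:criterion-extending-torsors-parahoric}, applied to the parahoric model $\mathcal{T}^\circ$ of $T$, shows every $\mathcal{T}^\circ$-torsor on $U$ extends to $\Spec(A_E)$, hence is trivial since $A_E$ is strictly henselian and $\mathcal{T}^\circ$ is smooth.

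The conceptual heart is the reduction $\mathrm{Br}(V)=0$: passing from the (already available) triviality of $\mathrm{PGL}_n$-torsors on $V$ to the vanishing of the whole Brauer group is exactly what Gabber's theorem buys us, and without it one only controls Azumaya classes of bounded index. After that everything is bookkeeping; the one point to watch is that $\mathcal{O}(V_L)^\times$ genuinely has torsion (roots of unity), so one must tensor its $\Z[\Gamma]$-projective resolution against the torsion-free lattice $X_\ast(T)$ and not the other way round.
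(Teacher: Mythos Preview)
Your argument is correct and reaches the same conclusion, but the route differs from the paper's in an interesting way. Both proofs share the key input---Gabber's theorem together with $H^1(V_{E'},\mathrm{PGL}_n)=\{1\}$ to kill $\mathrm{Br}(V_{E'})$ for every finite $E'/E$---but the paper runs Hochschild--Serre for the \emph{infinite} extension $\bar E/E$ and exploits that $E$ has cohomological dimension $\le 1$: it first shows $H^2(\Gal(\bar E/E),M)$ is divisible, then uses a restriction--corestriction trick along a splitting field to see $H^2(V,T)_{\mathrm{tor}}$ is simultaneously divisible and of bounded exponent, hence zero; only afterwards is $H^1(V,T)=0$ deduced, via an induced-torus resolution $0\to T''\to T'\to T\to 0$ and the already-established vanishing of $H^2(V,T'')_{\mathrm{tor}}$. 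You instead work with a \emph{finite} splitting field $L/E$, observe that $H^i(\Gamma',\mathcal{O}(V_L)^\times)=0$ for $i=1,2$ and all $\Gamma'\le\Gamma$ (this needs $\mathrm{Pic}(V_L)=0$ to make the edge map $H^2(\Gamma',-)\hookrightarrow H^2(V_{L^{\Gamma'}},\Gm)$ injective, which you have), invoke Nakayama--Tate to get cohomological triviality of $\mathcal{O}(V_L)^\times$, and then transport this to $M=\mathcal{O}(V_L)^\times\otimes_\Z X_\ast(T)$ via the standard ``$\Z[\Gamma]\otimes_\Z N\cong\Z[\Gamma]^{\mathrm{rk}\,N}$'' untwisting. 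Your approach is more symmetric---it handles $H^1$ and $H^2$ in one stroke rather than bootstrapping one from the other---and it never uses that $\mathrm{cd}(E)\le 1$; the paper's approach is more elementary in that it avoids the Nakayama--Tate machinery and the projective-dimension characterisation of cohomological triviality.
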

\begin{proof}
Let $\overline{E}$ be a separable closure of $E$ and consider the spectral sequence
$$
E^{i,j}_2=H^i(\mathrm{Gal}(\overline{E}/E),H^j(V_{\overline{E}},T_{\overline{E}}))\Rightarrow H^{i+j}(V,T)
$$
with 
$$
V_{\overline{E}}:=V\times_{\Spec(E)}\Spec(\overline{E}).
$$
Let $E^\prime$ over $E$ be a finite separable extension such that the base change $T_{E^\prime}$ of $T$ splits.
Then by \Cref{propositon-locally-free-sheaves-after-inverting-p} (or \Cref{corollary_h1_trivial_for_split_groups})
$$
H^1(V_{E^\prime},T_{E^\prime})=0,
$$
where
$$
V_{E^\prime}:=V\times_{\Spec(E)}\Spec(E^\prime).
$$
Passing to the limit over finite separable extensions $E^\prime$ of $E$ we can deduce
$$
H^1(V_{\overline{E}},T_{\overline{E}})=0.
$$
from \Cref{propositon-locally-free-sheaves-after-inverting-p} (or \Cref{corollary_h1_trivial_for_split_groups}) as for some $E^\prime/E$ finite the torus $T_{E^\prime}$ will be split.
As $E$ is of cohomologial dimension $1$ (thus of strict cohomological dimension $\leq 2$, cf.\ \cite[Chapitre I. Proposition 3.2.13]{serre_cohomologie_galoisienne}) we get
$$
H^i(\mathrm{Gal}(\overline{E}/E),H^j(V_{\overline{E}},T_{\overline{E}}))=0
$$
for $i\geq 3$ and that the group
$$
H^2(\mathrm{Gal}(\overline{E}/E),H^0(V_{\overline{E}},T_{\overline{E}}))
$$
is divisible. To see the second claim, set 
$$
M:=H^0(V_{\overline{E}},T_{\overline{E}}).
$$ 
Then for $n\in \Z\setminus\{0\}$ the exact sequences
$$
0\to M[n]\to M\to M/{M[n]}\to 0 
$$
and
$$
0\to M/M[n]\to M\to M/nM\to 0
$$
with $M[n]\subseteq M$ denoting the $n$-torsion submodule of $M$ furnish that multiplication by $n$ is a surjection on
$$
H^2(\mathrm{Gal}(\overline{E}/E),M)
$$
as it factors through surjections
$$
H^2(\mathrm{Gal}(\overline{E}/E),M)\cong H^2(\mathrm{Gal}(\overline{E}/E),M/M[n])\twoheadrightarrow H^2(\mathrm{Gal}(\overline{E}/E),M).
$$ 
The above spectral sequence thus yields a (split) short exact sequence
$$
0\to H^2(\mathrm{Gal}(\overline{E}/E),M)\to H^2(V,T)\to H^0(\mathrm{Gal}(\overline{E}/E),H^2(V_{\overline{E}},T_{\overline{E}}))\to 0.
$$
Assuming that $H^2(V_{\overline{E}},T_{\overline{E}})$ has only trivial torsion, we can derive that there is an isomorphism 
$$
H^2(\mathrm{Gal}(\overline{E}/E),M)\cong H^2(V,T)_{\mathrm{tor}}
$$
on torsion parts, in particular that $H^2(V,T)_{\mathrm{tor}}$ is divisible. Let $E^\prime/E$ be a finite separable extension splitting $T$. Then the inclusion resp.\ the norm
$$
T\to \mathrm{Res}_{E^\prime/E}(\mathbb{G}_m) \textrm{ , } \mathrm{Res}_{E^\prime/E}(\mathbb{G}_m)\to T
$$ compose to multiplication by $[E^\prime:E]$ on $T$. If $H^2(\Spec(A_{E^\prime}[1/\pi]),\mathbb{G}_m)_{\mathrm{tor}}=0$, then we can conclude (using Shapiro's isomorphism) that $H^2(V,T)_{\mathrm{tor}}$ is annihilated by $[E^\prime:E]$. Being also divisible this implies 
$$
H^2(V,T)_{\mathrm{tor}}=0
$$ 
as desired.
Hence, it suffices to prove that the torsion part
$$
H^2(V_{E^\prime},\mathbb{G}_m)_{\mathrm{tor}}=0
$$
vanishes for every separable algebraic extension $E^\prime/E$. Passing to the limit, we may assume that $E^\prime/E$ is a finite separable extension. As $V_{E^\prime}=\Spec(A_{E^\prime}[1/\pi])$ is affine we can apply Gabber's theorem (cf.\ \cite{hoobler_when_is_brx} or \cite[Corollary 3.1.4.2.]{lieblich_twisted_sheaves_and_the_period_index_problem}\footnote{using noetherian approximation for the general case}) and conclude that each class 
$$
\alpha\in H^2(V_{E^\prime},\mathbb{G}_m)_{\mathrm{tor}}
$$ 
is represented by some Azumaya algebra, i.e., there exists some $n$ such that $\alpha$ lies in the image of
$$
H^1(V_{E^\prime},\mathrm{PGL}_n)\to H^2(V_{E^\prime},\mathbb{G}_m).
$$
Now we can apply \Cref{proposition_torsors_general_split_case} to get that
$$
H^1(V_{E^\prime},\mathrm{PGL}_n)=\{1\}
$$
is trivial, which implies $\alpha=0$.
The proof of the statement for $H^2$ is now finished and we turn to show
$$
H^1(V,T)=0
$$
for every torus $T/E$.
If $T=\mathrm{Res}_{E^\prime/E} (\mathbb{G}_m)$ is an induced torus, then 
$$
H^1(V,T)=H^1(V_{E^\prime},\mathbb{G}_m)=0
$$
by \Cref{propositon-locally-free-sheaves-after-inverting-p}.
In general, let $T$ be an arbitrary torus and chose an exact sequence
$$
0\to T^{\prime\prime}\to T^\prime\xrightarrow{\alpha} T\to 0
$$
of tori with $T^\prime$ induced. Then there exists a morphism $\beta\colon T\to T^\prime$ such that $\alpha\circ\beta=n$ is multiplication by some non-zero $n\in \Z$. In particular, the group 
$$
H^1(V,T)
$$
is torsion as $H^1(V,T^\prime)=0$.
Using that the torsion in 
$$
H^2(V,T^{\prime\prime})
$$ 
vanishes we can thus conclude $H^1(V,T)=0$ from the exact sequence
$$
0=H^1(V,T^\prime)\to H^1(V,T)\to H^2(V,T^{\prime\prime}).
$$
\end{proof}

We record the following vanishing result for multiplicative coefficients.

\begin{lemma}
\label{lemma_second_cohomology_with_finite_coefficients}
For every finite multiplicative group scheme $D/E$ (cf.\ \cite{grothendieck_sga_3_expose_9_groupes_de_type_multiplicative}) the second flat cohomology group 
$$
H^2_{\mathrm{fl}}(V,D)=0
$$
vanishes.
\end{lemma}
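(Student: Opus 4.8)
The plan is to reduce the statement to the case of tori treated in \Cref{proposition_cohomology_of_tori} by a standard dévissage, so that the only substantial input is that proposition.

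\emph{Step 1: embed $D$ into a torus with torus quotient.} Since $D$ is of multiplicative type, Cartier duality identifies it with a finite continuous $\mathrm{Gal}(\overline{E}/E)$-module $A$; under this (contravariant, exact) dictionary, tori over $E$ correspond to finitely generated free $\Z$-modules with continuous Galois action. Choosing a surjection $L\twoheadrightarrow A$ of Galois modules with $L$ free of finite rank — for instance a finite direct sum of permutation modules $\Z[\Gal(E^\prime/E)]$ — and setting $L^\prime:=\Ker(L\to A)$, which is again free of finite rank, we obtain a short exact sequence $0\to L^\prime\to L\to A\to 0$. Dualizing yields a short exact sequence of commutative affine group schemes over $E$
$$
0\to D\to T\xrightarrow{\phi} T^\prime\to 0
$$
in which $T$ and $T^\prime$ are tori (for $D=\mu_n$ this recovers the Kummer sequence).

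\emph{Step 2: run the long exact sequence.} Passing to the associated long exact sequence in flat (fppf) cohomology over $V$, and using that $T$ and $T^\prime$ are smooth so that their flat cohomology coincides with their \'etale cohomology, \Cref{proposition_cohomology_of_tori} gives $H^1(V,T^\prime)=0$ and $H^2(V,T)_{\mathrm{tor}}=0$. The vanishing of $H^1(V,T^\prime)$ forces the connecting map $H^2_{\mathrm{fl}}(V,D)\to H^2_{\mathrm{fl}}(V,T)=H^2(V,T)$ to be injective. On the other hand $H^2_{\mathrm{fl}}(V,D)$ is torsion: writing $n$ for the order of $D$ we have $[n]_D=0$ (Deligne's theorem for finite commutative group schemes, or simply because $[n]_A=0$ and Cartier duality is additive), so $n$ annihilates $H^2_{\mathrm{fl}}(V,D)$. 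Hence the image of $H^2_{\mathrm{fl}}(V,D)$ lands in $H^2(V,T)_{\mathrm{tor}}=0$, and therefore $H^2_{\mathrm{fl}}(V,D)=0$.

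The argument is essentially formal once \Cref{proposition_cohomology_of_tori} is available, so there is no real obstacle; the only points needing a little care are the bookkeeping of the variance in Cartier duality (so that the quotient $T/D$ is genuinely a torus and not merely a group of multiplicative type) and the comparison of flat and \'etale cohomology, which intervenes only for the smooth groups $T$ and $T^\prime$ and is therefore harmless.
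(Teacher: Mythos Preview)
Your argument is correct and follows the same route as the paper: embed $D$ in a short exact sequence $0\to D\to T\to T^\prime\to 0$ with $T,T^\prime$ tori, then use the long exact sequence together with \Cref{proposition_cohomology_of_tori}. You have simply spelled out the details the paper leaves implicit (the construction via Cartier duality, the injectivity from $H^1(V,T^\prime)=0$, and the torsion argument landing in $H^2(V,T)_{\mathrm{tor}}=0$).
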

\begin{proof}
We may choose a short exact sequence (for the flat topology)
$$
0\to D\to T\to T^\prime\to 0
$$
of multiplicative group schemes with $T$ and $T^\prime$ tori over $E$. Then the statement follows from \Cref{proposition_cohomology_of_tori} by taking the associated long exact sequence in cohomology.
\end{proof}

We record the following corollary of \Cref{proposition_cohomology_of_tori}.

\begin{lemma}
\label{lemma_central_isogenies}
Let $1\to H\to G^\prime\to G\to 1$ be a central extension of two (connected) reductive groups $G^\prime$ resp.\ $G$ over $E$. Then
$$
H^1(V,G^\prime)=1 \Leftrightarrow H^1(V,G)=1. 
$$  
\end{lemma}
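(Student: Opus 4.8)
The plan is to feed the central extension into the long exact cohomology sequence and reduce everything to two vanishing statements about the boundary maps. Since $H$ is central in the reductive group $G'$ it is a subgroup scheme of multiplicative type of $Z(G')$, so I would work with fppf cohomology throughout (it agrees with étale cohomology on the smooth groups $G'$, $G$). The extension $1\to H\to G'\xrightarrow{p}G\to 1$ gives an exact sequence of pointed sets
$$
H^1_{\mathrm{fl}}(V,H)\xrightarrow{\ \iota\ }H^1(V,G')\xrightarrow{\ p_*\ }H^1(V,G)\xrightarrow{\ \delta\ }H^2_{\mathrm{fl}}(V,H),
$$
and I claim it is enough to show that $\iota$ has trivial image and that $\delta$ sends every class to $0$. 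Indeed, if $\iota$ is trivial then $\ker(p_*)=\{\ast\}$ by exactness, so $H^1(V,G)=\{\ast\}$ forces $H^1(V,G')=p_*^{-1}(\{\ast\})=\{\ast\}$; and if $\delta$ is trivial then $\operatorname{im}(p_*)=\ker(\delta)=H^1(V,G)$, so $H^1(V,G')=\{\ast\}$ forces $H^1(V,G)=\operatorname{im}(p_*)=\{\ast\}$. (I do not claim $p_*$ is a bijection.)

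For the triviality of $\iota$ I would fix a maximal torus $T'\subseteq G'$ over $E$. As $H$ is central it lies in $T'$, and $T:=T'/H$ is a maximal torus of $G$; the morphism of central extensions $(1\to H\to T'\to T\to 1)\to(1\to H\to G'\to G\to 1)$ shows, by naturality of boundary maps, that the connecting map $G(V)\to H^1_{\mathrm{fl}}(V,H)$ restricts on $T(V)\subseteq G(V)$ to the connecting map for $T'\twoheadrightarrow T$. Since $H^1(V,T')=0$ by \Cref{proposition_cohomology_of_tori}, the latter is surjective, hence so is $G(V)\to H^1_{\mathrm{fl}}(V,H)$; its image is $\ker(\iota)$, so $\iota$ is trivial.

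For the triviality of $\delta$ the case where $H$ is finite is immediate, since then $H^2_{\mathrm{fl}}(V,H)=0$ by \Cref{lemma_second_cohomology_with_finite_coefficients}. In general I would reduce to this by pushing out the central torus. Using that $Z(G')^{\circ}$ maps isogenously onto the maximal torus quotient $D':=G'/G'_{\mathrm{der}}$, and that the finite Galois action on $X^*(D')\otimes\mathbb{Q}$ is semisimple, one produces a quotient torus $D'\twoheadrightarrow S$ over $E$ so that the composite $\phi\colon G'\twoheadrightarrow D'\twoheadrightarrow S$ restricts on $H$ to a surjection $f:=\phi|_H\colon H\twoheadrightarrow S$ with finite kernel $N$. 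The pushout of $1\to H\to G'\to G\to 1$ along $f$ is a central extension $1\to S\to G'_f\to G\to 1$ which is split — $\phi$ furnishes a homomorphic retraction of $S\hookrightarrow G'_f$, so $G'_f\cong S\times G$ — and hence its boundary map $H^1(V,G)\to H^2_{\mathrm{fl}}(V,S)$ vanishes; by functoriality of connecting maps this boundary map equals $f_*\circ\delta$, so $f_*\circ\delta=0$. On the other hand $H^2_{\mathrm{fl}}(V,N)=0$ by \Cref{lemma_second_cohomology_with_finite_coefficients}, so the exact sequence $0\to N\to H\xrightarrow{f}S\to 0$ shows that $f_*\colon H^2_{\mathrm{fl}}(V,H)\to H^2_{\mathrm{fl}}(V,S)$ is injective, and therefore $\delta=0$.

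The step I expect to be the main obstacle is the second half of the $\delta$-argument: producing the torus quotient $\phi\colon G'\to S$ whose restriction to $H$ is an isogeny onto $S$, and verifying that the pushed-out extension really splits. This is the only place where one genuinely uses the internal structure of the reductive group $G'$ and the fact that $H$ sits centrally and is of multiplicative type; by contrast the triviality of $\iota$ is the clean conceptual input — it reduces entirely to $H^1(V,\text{torus})=0$ via a maximal torus of $G'$ — and the passage from ``$\iota$ and $\delta$ trivial'' to the stated equivalence is purely formal.
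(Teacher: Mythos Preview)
Your proof is correct, and the two implications are organised exactly as in the paper: the ``$\Leftarrow$'' direction (your triviality of $\iota$) is in both cases the maximal--torus trick, showing that every class in $H^1_{\mathrm{fl}}(V,H)$ already dies under $H^1_{\mathrm{fl}}(V,H)\to H^1(V,T')\to H^1(V,G')$ because $H^1(V,T')=0$ by \Cref{proposition_cohomology_of_tori}.

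The genuine difference is in the treatment of $\delta$ when $H$ is not finite. The paper first reduces to $G$ adjoint and then replaces $G'$ by $G'/H^\circ$, using that (i) $H^1(V,H^\circ)=0$ makes $H^1(V,G')\to H^1(V,G'/H^\circ)$ injective (central action with trivial acting group) and (ii) the boundary $H^1(V,G'/H^\circ)\to H^2(V,H^\circ)$ lands in torsion---this last step appeals to the split case via a restriction--corestriction argument---and $H^2(V,H^\circ)_{\mathrm{tor}}=0$. You instead manufacture a torus quotient $\phi\colon G'\to S$ so that $f=\phi|_H$ has finite kernel and the push-out extension splits; then $f_*\circ\delta=0$ by functoriality and $f_*$ is injective on $H^2_{\mathrm{fl}}$ since $H^2_{\mathrm{fl}}(V,\ker f)=0$. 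Your route avoids both the reduction to adjoint groups and the detour through the split case, at the cost of the structure-theoretic construction of $S$ (which is indeed available: take a Galois-stable rational complement to $X_*(\overline{H^\circ})$ inside $X_*(G'^{\mathrm{ab}})$, intersect with the lattice to get a saturated subtorus $T_1$, and set $S=G'^{\mathrm{ab}}/T_1$). Both arguments rest on the same two inputs, \Cref{proposition_cohomology_of_tori} and \Cref{lemma_second_cohomology_with_finite_coefficients}; your packaging is slightly more self-contained, the paper's is shorter once the earlier results are in hand.
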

\begin{proof}
Let $G^\mathrm{ad}$ be the adjoint quotient of $G$. Then $G^\mathrm{ad}$ is also the adjoint quotient of $G^\prime$. Argueing for the pairs $(G,G^\mathrm{ad})$ resp.\ $(G^\prime,G^\mathrm{ad})$ with the respective central extensions reduces to the case that $G$ is adjoint.
Let $H^\circ\subseteq H$ be the connected component of the identity. By \Cref{proposition_cohomology_of_tori} 
$$
H^1(V,G^\prime)
$$ 
vanishes if and only if
$$
H^1(V,G^\prime/H^\circ)
$$
(noting that the image of the connecting morphism
$$
H^1(V,G^\prime/H^\circ)\to H^2(V,H^\circ)
$$
lands inside the torsion subgroup as each $G^\prime/{H^\circ}$-torsor on $A_{E}$ is trivial after base change along some finite extension of $E$).
Hence we may assume that $H^\circ=\{1\}$ is trivial and thus that $H$ is finite. By \Cref{lemma_second_cohomology_with_finite_coefficients} the group $H^2(\Spec(A_E[1/\pi]),H)$ vanishes. Hence,
$$
H^1(\Spec(A_E[1/\pi]),G^\prime)=\{1\}
$$
implies
$$
H^1(\Spec(A_E[1/\pi]),G)=\{1\}.
$$
Assume conversely that $H^1(\Spec(A_E[1/\pi]),G)=\{1\}$, then it suffices to see that the morphism
\[
  H^1(\Spec(A_E[1/\pi]),H)\to H^1(\Spec(A_E[1/\pi]),G^\prime)
\]
is trivial. But the embedding $H\to G^\prime$ factors through some maximal torus $T\subseteq G^\prime$ and $H^1(\Spec(A_E[1/\pi]),T)=\{1\}$ by \Cref{proposition_cohomology_of_tori}. This finishes the proof.
\end{proof}

We can now turn to our main theorem about extending torsors on the punctured spectrum of $A_E$.

We start by establishing the following criterion for extending a torsor to $\Spec(A_E)$, which is more precise than \Cref{coro:criterion-extending-torsors-parahoric}.

\begin{lemma}
  \label{proposition_criterion_via_crystalline_part}
Let $\mathcal{P}$ be a $\mathcal{G}$-torsor on $U$. Then $\mathcal{P}$ extends to $\Spec(A_E)$ if the restriction $\mathcal{P}_{|U_\crys}$ of $\mathcal{P}$ to the crystalline part $U_\crys\subseteq U$ is trivial (cf.\ \Cref{sec:notations}).
\end{lemma}
\begin{proof}
Note that $G$ is quasi-split by our assumption that $k$ is algebraically closed.By \Cref{coro:criterion-extending-torsors-parahoric} it suffices to prove that $\mathcal{P}_{|V}$ is trivial. Let 
$$
T\subseteq B\subseteq G
$$ 
be a maximal torus and a Borel. 
As it is trivial the torsor
$$
\mathcal{P}_{|U_\crys}
$$
admits a reduction to $B$ over $U_\crys$. 
By \Cref{lemma_spectrum_of_a_inf} and \Cref{corollary_local_rings_at_distinguished_elements_are_dvr} for $s\in V\setminus U_\crys$ the local ring
$$
\mathcal{O}_{V,s}
$$ 
is a discrete valuation ring. Hence by properness of the quotient $\mathcal{P}_{|V}/B$ over $E$ and the valuative criterion for properness the torsor $\mathcal{P}_{|V}$ admits a reduction 
$$
\mathcal{P}^\prime\in H^1(V,B)
$$ 
to $B$ over the whole of $V$. In other words, there exists a $B$-torsor $\mathcal{P}^\prime$ over $V$ such that
$$
\mathcal{P}^\prime\times^B G\cong \mathcal{P}_{|V}.
$$
Let $\mathrm{rad}(B)\subseteq B$ be the unipotent radical of $B$ and consider the natural map
$$
H^1(V,B)\xrightarrow{\Phi} H^1(V,B/\mathrm{rad}(B)). 
$$
The fiber $\Phi^{-1}(\Phi(\mathcal{P}^\prime))$ containing $\mathcal{P}^\prime\in H^1(V,B)$ can naturally be identified with the set
$$
H^1(V,\mathrm{rad}(B)_{\mathcal{P}^\prime}),
$$ 
where 
$$
\mathrm{rad}(B)_{\mathcal{P}^\prime}:=(\mathrm{rad}(B)\times_{\Spec(E)}V)\times^B \mathcal{P}^\prime
$$
is a twisted form of the constant group scheme $\mathrm{rad}(B)\times_{\Spec(E)}V$ over $V$. Here $B$ acts on $\mathrm{rad}(B)$ via conjugation. As $\mathrm{rad}(B)$ admits a canonical, i.e., $B$-stable, filtration whose graded pieces are vector spaces over $E$ (with $B$ acting linearly) the unipotent group scheme $\mathrm{rad}(B)_{\mathcal{P}^\prime}$ over $V$ admits a filtration with graded pieces vector bundles over $V$. As $V$ is affine the (\'etale) cohomology with coefficients in quasi-coherent sheaves, in particular vector bundles, vanishes and therefore 
$$
H^1(V,\mathrm{rad}(B)_{\mathcal{P}^\prime})=\{1\}
$$
as well.
In particular, the map $\Phi$ is injective. 
By \Cref{proposition_cohomology_of_tori} the pointed set
$$
H^1(V,B/\mathrm{rad}(B))\cong H^1(V,T)=\{1\}
$$
is trivial and by injectivity of $\Phi$ we can conclude that $\mathcal{P}^\prime$, hence $\mathcal{P}_{|V}$, is trivial.
Thus we the proof is finished.
.
\end{proof}

Now we can prove our main theorem for $A_E$, cf.\ \Cref{theorem_introduction_main_theorem_a}.

\begin{theorem}
  \label{sec:extend-tors-mathrmsp-1-main-theorem-a-e}
  Let $\mathcal{G}$ be a parahoric group scheme over $\mathcal{O}_E$. Then every $\mathcal{G}$-torsor over the punctured spectrum $U=\Spec(A_E)\setminus\{s\}\subseteq \Spec(A_E)$ extends to $\Spec(A_E)$.
\end{theorem}
\begin{proof}
  Let $G:=\mathcal{G}\otimes_{\mathcal{O}_E}E$, and set as before
  \[
    V:=\Spec(A_E[1/\pi]),
  \]
  \[
    U_{\mathrm{crys}}=\Spec(A_{E,\mathfrak{p}_\crys}).
  \]
  By \Cref{proposition_criterion_via_crystalline_part} it suffices to show that the map
  \[
    H^1(V,G)\to H^1(U_\crys,G)
  \]
  is trivial, or equivalently that $H^1(V,G)=0$. By \Cref{lemma_central_isogenies} we are free to replace $G$ by a central extension. In particular, we may assume that the derived subgroup $G^{\mathrm{der}}$ of $G$ is simply connected and $G\cong G^{\mathrm{der}}\times G/G^{\mathrm{der}}$. By \Cref{proposition_cohomology_of_tori} we can then reduce to the case that $G$ is semisimple (and simply connected). Then $G$ is Weil restriction and after replacing $E$ by a suitable finite extension we can then assume that $G$ is the product of (quasi-split) simple, simply connected groups. As a quasi-split group of type $E_8$ is automatically split, we may by \Cref{corollary_h1_trivial_for_split_groups}, then assume that $G$ is quasi-split, simple and not of type $E_8$.
  Let
  \[
    \tilde{A}_E
  \]
  be the $\mathcal{O}_E$-algebra introduced in \Cref{sec:decompleting-a_e}.
  Set
  \[
    \tilde{V}:=\Spec(\tilde{A}_E[1/\pi]).
  \]
  By \Cref{sec:decompleting-a_e-5-torsors-on-tilde-a-e-1-over-pi} the canonical morphism
  \[
    H^1(\tilde{V},G)\to H^1(V,G)
  \]
  is a bijection.
  Let
  \[
    \tilde{U}_{\crys}\subseteq \tilde{V}
  \]
  be the ``crystalline'' part of $\tilde{V}$, cf.\ \Cref{sec:decompleting-a_e-4-crystalline-part-of-tilde-a-e}. By \Cref{sec:decompleting-a_e-4-crystalline-part-of-tilde-a-e}
  \[
    U_\crys\subseteq \tilde{U}_{\crys}\times_{\tilde{V}} V,
  \]
  and
  \[
    \tilde{U}_\crys=\Spec(\tilde{A}_{E,\tilde{\mathfrak{p}}_\crys})
  \]
  is the spectrum of a valuation ring (with fraction field $\tilde{K}_E=\mathrm{Frac}(\tilde{A}_E)$).
  It suffices to show that each $G$-torsor $\tilde{\mathcal{P}}$ on $\tilde{U}_\crys$ is trivial.
By \Cref{sec:decompleting-a_e-1-cohomological-dimension-of-tilde-a-e}, \Cref{sec:decompleting-a_e-1-kato-cohomologie-of-tilde-k-e} and our reductions on $G$, all assumptions for \cite[Th\'eor\`eme 4]{gille_cohomologie_galoisienne_des_groupes_quasi_deployes_sur_des_corps_de_dimension_cohomologique_2} are met and we can conclude that the torsor
  \[
    \tilde{\mathcal{P}}_{|\tilde{K}_E}
  \]
  is trivial. As
  \[
    \tilde{A}_{E,\tilde{\mathfrak{p}}_\crys}
  \]
  is a valuation ring and $G$ quasi-split, we can then (similar to the proof of \Cref{proposition_criterion_via_crystalline_part}) conclude that $\tilde{\mathcal{P}}$ admits a reduction to a maximal torus $T$ of $G$ (defined over $E$).
  Thus, we may replace $G$ by $T$. Note that $T$ is an induced torus over $E$ as we assumed that $G$ is simply connected. Thus,
  \[
    T\cong \prod\limits_{i=1}^n\mathrm{Res}_{E_i/E}\Gm
  \]
  for some finite separable extensions $E_i$ of $E$. We then get that
  \[
    H^1(\tilde{U}_{\crys},T)=0
  \]
  because the rings
  \[
    \tilde{A}_{E,\tilde{\mathfrak{p}}_\crys}\otimes_{E} E_i=\tilde{A}_{E_i,\tilde{\mathfrak{p}}_\crys}
  \]
  for $i=1,\ldots, n$ are local. This finishes the proof of our main theorem.
\end{proof}

For clarity, let us highlight at several steps in the proof of \Cref{sec:extend-tors-mathrmsp-1-main-theorem-a-e} which features of the rings $\tilde{A}_E, A_E$ we used. Roughly, arguments related to the ``crystalline part'' had to be carried out over $\tilde{A}_E$, while arguments related to the ``non-crystalline'' part over $A_E$. More precisely, in \Cref{proposition_double_cosets_parahoric} we used the (Witt vector) affine Grassmannian and it is not clear whether this results holds over $\tilde{A}_E$. Unfortunately, the results of Fargues-Fontaine, which are used in \Cref{section_spectrum_of_a}, are not known over $\tilde{A}_E$. In particular, we don't know a concrete description of the ``non-crystalline'' part of $\Spec(\tilde{A}_E)$ and thus cannot circumvent a statement like \Cref{proposition_criterion_via_crystalline_part}, which argues on $A_E$. However, for proving triviality on the crystalline part, we have to use $\tilde{A}_E$, because only there we can prove the generic triviality of torsors. Namely, we don't know if the fraction field of $K_E$ has cohomological dimension $\leq 2$, cf.\ \Cref{sec:decompleting-a_e-1-cohomological-dimension-of-tilde-a-e}.
\section{Extending torsors to $\Spec(R_E)$}
\label{section_torsors_re}

We now turn to the question of extending torsors on the punctured spectrum of (some) regular 2-dimensional local noetherian rings.
We continue to use the notation from \Cref{sec:notations}. Thus $E$ denotes a complete discretely valued field, $\mathcal{O}_E$ its ring of integers, etc.
Furthermore we let $R_E$ be given by
$$
R_E:=\mathcal{O}_E[[z]].
$$
We again denote by
$$
s:=s_{R_E}\in \Spec(R_E)
$$
the unique closed point and by
$$
U:=U_{R_E}:=\Spec(R_E)\setminus\{s\}
$$
its complement.
Moreover, set
$$
V:=V_{R_E}:=\Spec(R_E[1/\pi]).
$$
If confusion with our previous notation for $A_E$ from \Cref{sec:notations} may be possible we will add subscripts.
First of all let us recall the well-known fact that vector bundles on $U$ extend uniquely to $\Spec(R_E)$.

\begin{lemma}
\label{lemma_vector_bundles_extend_noetherian_case}
For vector bundles the restriction functor
$$
\mathrm{Bun}(\Spec(R_E))\to \mathrm{Bun}(U)
$$
is an equivalence.
\end{lemma}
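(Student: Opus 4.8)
The plan is to imitate the proof of \Cref{lemma-extending-vector-bundles}, using that $R_E=\mathcal{O}_E[[z]]$ is a regular (hence Cohen--Macaulay) local ring of Krull dimension $2$, so that its closed point $s$ has codimension $2$ in $\Spec(R_E)$.

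First I would establish full faithfulness together with the shape of the quasi-inverse. For any vector bundle $\mathcal{W}$ on $\Spec(R_E)$ the module $M=\Gamma(\Spec(R_E),\mathcal{W})$ is finite free over the Cohen--Macaulay ring $R_E$, hence has depth $2$ along the maximal ideal $\mathfrak{m}$ of $R_E$; therefore the local cohomology groups $H^0_{\mathfrak{m}}(M)=H^1_{\mathfrak{m}}(M)$ vanish, and the local cohomology exact sequence yields
$$
\Gamma(\Spec(R_E),\mathcal{W})\ \xrightarrow{\ \sim\ }\ \Gamma(U,\mathcal{W}_{|U}).
$$
Applied to $\mathcal{W}=\mathcal{H}om(\mathcal{W}_1,\mathcal{W}_2)$ for two vector bundles $\mathcal{W}_1,\mathcal{W}_2$ on $\Spec(R_E)$, this shows that restriction is fully faithful; applied to $\mathcal{W}$ itself, it shows that any quasi-inverse must send a vector bundle $\mathcal{V}$ on $U$ to the quasi-coherent sheaf attached to the $R_E$-module $\Gamma(U,\mathcal{V})$.

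The content is essential surjectivity. Given $\mathcal{V}\in\mathrm{Bun}(U)$, since $R_E$ is noetherian I would first extend $\mathcal{V}$ to some coherent sheaf $\mathcal{F}$ on $\Spec(R_E)$, and then replace $\mathcal{F}$ by its double dual $\mathcal{F}^{\vee\vee}$, which is again coherent. Since $\mathcal{V}$ is locally free, hence reflexive, on $U$ and forming duals of a coherent sheaf commutes with restriction along the open immersion $U\hookrightarrow\Spec(R_E)$, one gets $\mathcal{F}^{\vee\vee}_{|U}\cong\mathcal{V}^{\vee\vee}\cong\mathcal{V}$. The key point is then that a finitely generated reflexive module $N$ over the two-dimensional regular local ring $R_E$ is free: $R_E$ is normal, so $N$ satisfies Serre's condition $S_2$, whence $\mathrm{depth}_{\mathfrak{m}}N\geq\min(2,\dim R_E)=2$; by the Auslander--Buchsbaum formula $\mathrm{pd}_{R_E}N=\mathrm{depth}\,R_E-\mathrm{depth}_{\mathfrak{m}}N=0$, so $N$ is projective, hence free. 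Thus $\mathcal{F}^{\vee\vee}$ is a vector bundle on $\Spec(R_E)$ restricting to $\mathcal{V}$.

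I do not expect a serious obstacle here; the only things needing a little care are that $\mathcal{F}^{\vee\vee}$ is still coherent and that double-dualizing commutes with restriction to $U$ (both routine in the noetherian setting), together with the depth and Auslander--Buchsbaum bookkeeping. As an alternative one could avoid reflexive hulls entirely and argue as in \cite[Lemma 4.6]{bhatt_morrow_scholze_integral_p_adic_hodge_theory}: cover $U=D(\pi)\cup D(z)$ by its two standard affine opens and glue finite free modules directly, but the double-dual argument is cleaner in the present noetherian situation.
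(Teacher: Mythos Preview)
Your proposal is correct. Both arguments hinge on the same idea---Auslander--Buchsbaum over a two-dimensional regular local ring reduces freeness to $\mathrm{depth}\geq 2$---but the implementations differ. The paper works directly with $M=H^0(U,\mathcal{V})$ and proves $\mathrm{depth}(M)=2$ by hand: it uses the short exact sequence $0\to\mathcal{V}\xrightarrow{\pi}\mathcal{V}\to\mathcal{V}/\pi\to 0$ to embed $M/\pi M$ into $H^0(U,\mathcal{V}/\pi)$, which is torsion-free over $R_E/\pi$ because $\mathcal{V}/\pi$ is a vector bundle on the generic point of $\Spec(R_E/\pi)$. You instead extend $\mathcal{V}$ to a coherent sheaf and pass to the reflexive hull, getting $\mathrm{depth}\geq 2$ from the $S_2$ property of reflexive modules. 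Your route is the standard textbook argument and works uniformly over any two-dimensional regular local ring without choosing a regular sequence; the paper's route is more explicit about the specific ring $R_E$ but tacitly uses that $H^0(U,\mathcal{V})$ is finitely generated, a point your coherent-extension step makes transparent.
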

\begin{proof}
  Fully faithfullness follows from $H^0(U,\mathcal{O}_U)=R_E$ which is implied by normality of $R_E$. Conversely let $\mathcal{V}$ be a vector bundle on $U$ and let $M:=H^0(U,\mathcal{V})$. It is enough to prove that $M$ is finite projective over $R_E$ as then the vector bundle over $\Spec(R_E)$ associated with $M$ will extend $\mathcal{V}$. By the Auslander-Buchsbaum formula
 $$
\mathrm{pd}(M)+\mathrm{depth}(M)=2
$$
where $\mathrm{pd}(M)$ and $\mathrm{depth}(M)$ are the projective dimension and depth of $M$. Hence, it suffices to proof $\mathrm{depth}(M)=2$. For this it suffices to proof that $M/\pi$ is torsion-free over $R_E/\pi$. But applying cohomology to the exact sequence
$$
0\to \mathcal{V}\overset{\pi}{\to}\mathcal{V}\to \mathcal{V}/\pi\to 0 
$$
we see that $M/\pi$ embeds into $H^0(U,\mathcal{V}/\pi)$ which is torsionfree over $R_E/\pi$ as $\mathcal{V}/\pi$ is a vector bundle on $\Spec(R_E/\pi)\cap U=\Spec(\mathrm{Frac}(R_E/\pi))$.
\end{proof}

Hence, we can apply our general results from \Cref{section_generalities_on_extending_torsors}, in particular \Cref{lemma-descent-criterion-for-extending}.
For this let us define a morphism of $\mathcal{O}_E$-algebras
$$
f\colon R_E\to A_E.
$$
Namely, let $\varpi\in \mathfrak{m}_C\setminus\{0\}$ be an arbitrary element and define $f$ by
$$
f(z):=[\varpi].
$$
\begin{lemma}
\label{lemma_varphi_faithfully_flat}
The morphism $f\colon R_E\to A_E$ is faithfully flat.
\end{lemma}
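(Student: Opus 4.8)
The plan is to reduce the statement to a flatness assertion modulo $\pi$. First note that $f$ is a local homomorphism of local rings: the maximal ideal $\mathfrak{m}_{R_E}=(\pi,z)$ is carried into the maximal ideal $\mathfrak{m}$ of $A_E$, since $\pi\in\mathfrak{m}$ and $f(z)=[\varpi]\in\mathfrak{m}$ because $\varpi\in\mathfrak{m}_C$. As a flat local homomorphism of local rings is automatically faithfully flat, it suffices to prove that $f$ is flat.

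To prove flatness I would argue $\pi$-adically. The element $\pi$ is a nonzerodivisor on $R_E$ and on $A_E$ (for $A_E$ this is immediate from the unique expansions $a=\sum_{i\ge 0}[a_i]\pi^i$), and both rings are $\pi$-adically complete and separated. Modulo $\pi$ one has $R_E/\pi R_E\cong k[[z]]$ and $A_E/\pi A_E\cong\mathcal{O}_C$, and the induced map $k[[z]]\to\mathcal{O}_C$ sends $z$ to $\varpi$ (clear from the Teichm\"uller expansion). Since $k[[z]]$ is a discrete valuation ring and $\mathcal{O}_C$ is a domain, every nonzero element of $k[[z]]$---being $z^n$ times a unit---acts on $\mathcal{O}_C$ as $\varpi^n$ times a unit of $\mathcal{O}_C$, hence as a nonzerodivisor; so $\mathcal{O}_C$ is torsion-free, and therefore flat, over the discrete valuation ring $k[[z]]$. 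Combined with the vanishing of $\mathrm{Tor}^{R_E}_1(R_E/\pi R_E, A_E)$, which again follows from $\pi$ being a nonzerodivisor on $A_E$, the local criterion of flatness along the principal ideal $(\pi)$ then reduces the flatness of $f$ to these verified facts---with one caveat, discussed next.

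The one point requiring care---and the main obstacle---is that $A_E$ is not Noetherian, so one cannot invoke the usual form of the local criterion of flatness stated for finite modules over Noetherian rings. I would circumvent this either by citing the version of the criterion valid for a $\pi$-torsion-free, $\pi$-adically complete module over a $\pi$-adically complete base (no finiteness needed), or by arguing in stages: the infinitesimal local criterion for the nilpotent ideals $(\pi)\subseteq R_E/\pi^n R_E$---which carries no finiteness hypotheses---together with the two facts established above (flatness of $\mathcal{O}_C$ over $k[[z]]$ and $\mathrm{Tor}_1^{R_E/\pi^n R_E}(R_E/\pi R_E, A_E/\pi^n A_E)=0$) shows that $A_E/\pi^n A_E$ is flat over $R_E/\pi^n R_E$ for every $n$, and passing to the limit over $n$---legitimate because $R_E$ and $A_E$ are $\pi$-adically complete and $\pi$-torsion-free---gives flatness of $A_E$ over $R_E$. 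Equivalently, one can use the graded form of the criterion, where $\mathrm{gr}_{(\pi)}R_E\cong k[[z]][T]$ and $\mathrm{gr}_{(\pi)}A_E\cong\mathcal{O}_C[T]$ (the latter because $\pi$ is a nonzerodivisor on $A_E$), the second being flat over the first by the flatness of $\mathcal{O}_C$ over $k[[z]]$.
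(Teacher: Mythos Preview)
Your proof is correct and reconstructs precisely the argument of \cite[Lemma~4.30]{bhatt_morrow_scholze_integral_p_adic_hodge_theory}, which is all the paper invokes; the reduction modulo $\pi$ to the flatness of $\mathcal{O}_C$ over the discrete valuation ring $k[[z]]$, together with a local flatness criterion lifted through the $\pi$-adic filtration, is exactly that argument. Your caution about the non-Noetherianness of $A_E$ is well placed, and the remedy you propose---either the infinitesimal criterion for nilpotent ideals at each finite level followed by passage to the $\pi$-adic limit, or the associated-graded form of the criterion---is the standard way to justify this step; note that since $R_E$ is Noetherian and $A_E$ is $\pi$-torsion-free and $\pi$-adically complete (hence derived $\pi$-complete), the relevant flatness criterion applies without further finiteness hypotheses.
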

\begin{proof}
The proof in \cite[Lemma 4.30]{bhatt_morrow_scholze_integral_p_adic_hodge_theory} works in this situation as well.
\end{proof}

Thus we are able to apply \Cref{lemma-descent-criterion-for-extending} to obtain our main result for $R_E$, cf.\ \Cref{sec:introduction-corollary-main-theorem}.

\begin{proposition}
\label{proposition_criterion_for_extending_noetherian_case}
For every parahoric group scheme $\mathcal{G}$ over $\mathcal{O}_E$ a $\mathcal{G}$-torsor $\mathcal{P}$ on $U$ extends to $\Spec(R_E)$.
\end{proposition}
\begin{proof}
The statement follows by descent (cf.\ \Cref{lemma_varphi_faithfully_flat} and \Cref{lemma-descent-criterion-for-extending}) from our main theorem \Cref{sec:extend-tors-mathrmsp-1-main-theorem-a-e} over $A_E$.  
\end{proof}

This result extends \cite[Proposition 1.4.3.]{Kisin2015a} to complete generality.

\section{ $v$-local triviality for perfect rings }
\label{sec:v-stack-g}

We keep the notations from \Cref{sec:notations}. In particular, $k$ is a perfect field of $\mathrm{char}(k)=p>0$, and let $G/E$ a reductive group. In this section we want to prove \Cref{sec:v-stack-g-2-torsors-trivial-in-v-topology} by deducing it from our main theorem \Cref{sec:extend-tors-mathrmsp-1-main-theorem-a-e}.

We start with some lemmata.
Let $\mathrm{Alg}_k^{\mathrm{perf}}$ denote the category of perfect $k$-algebras.\footnote{We don't use ``$\mathrm{Perf}_k$'' as the latter is sometimes used for the category of perfectoid spaces over $\Spa(k)$.}
For each $R\in \mathrm{Alg}_k^{\mathrm{perf}}$ we set
\[
  A(R):=A_E(R)[1/\pi]:=W(R)\hat{\otimes}_{W(k)}\mathcal{O}_E,
\]
i.e., $A_E(R)=\mathcal{O}_E\otimes_{W(k)}W(R)$ if $\mathrm{char}(E)=0$, or $A_E=R((\pi))$ if $E=k((\pi))$.

We define the functor
\[
  F_G\colon \mathrm{Alg}_k^{\mathrm{perf}}\to (\Sets),\ R\mapsto H^1(\Spec(A(R)[1/\pi])_\et,G).
\]

\begin{lemma}
  \label{sec:v-stack-g-3-torsors-on-witt-vectors-commute-with-colimits}
  The functor $F_G\colon  \mathrm{Alg}_k^{\mathrm{perf}}\to (\Sets)$ commutes with filtered colimits and finite products.
\end{lemma}
\begin{proof}
  The case of finite products is clear. Let $R=\varinjlim\limits_{i\in I}R_i$ be a filtered colimit in $\mathrm{Alg}^{\mathrm{perf}}_k$. Then
  \[
    \widetilde{A(R)}:=\varinjlim\limits_{i\in I}A(R_i)
  \]
  is $\pi$-torsion free, henselian along $(\pi)$, and has the perfect ring $R$ as a quotient modulo $(\pi)$. Hence, its $\pi$-completion is $A(R)$.
  By \cite[Theorem 5.8.14]{gabber_ramero_almost_ring_theory} we obtain
  \[
    F_G(R)=H^1(\Spec(A(R)[1/\pi])_\et,G)\cong H^1(\Spec(\widetilde{A(R)}[1/\pi])_\et,G).
  \]
  Using that $G$ is affine and smooth, the last term identifies with
  \[
    \varinjlim\limits_{i\in I} F_G(R_i)
  \]
  and we can conclude.
\end{proof}

In the following we will use the $v$-and arc-topology on $\mathrm{Alg}^{\mathrm{perf}}_k$, which were introduced in \cite{bhatt_scholze_projectivity_of_the_witt_vector_affine_grassmannian} and \cite{bhatt2018arc}. 

We recall that a normal integral domain $R$ (e.g., a valuation ring) is absolutely integrally closed, i.e., each monic monomial in $R[x]$ has a solution in $R$, if and only if its fraction field is algebraically closed, cf.\ \cite[Definition 3.22]{bhatt2018arc}.

\begin{lemma}
  \label{sec:v-stack-g-4-isomorphism-for-v-sheaves}
  Let $F_1,F_2\colon \mathrm{Alg}^{\mathrm{perf}}_k\to (\Sets)$ be two $v$-sheaves commuting with filtered colimits and finite products. Let $\eta\colon F_1\to F_2$ be a natural transformation. If $\eta_R$ is a bijection for any absolutely integrally closed valuation ring $R\in \mathrm{Alg}^{\mathrm{perf}}_k$ of finite rank, then $\eta$ is an equivalence. 
\end{lemma}
A similar statement for functors into the $\infty$-category $D(\Lambda)^{\geq 0}$ of some ring $\Lambda$ appears in \cite[Proposition 3.28]{bhatt2018arc}, and we can follow the proof there.
\begin{proof}
  By \cite[Lemma 6.2.]{bhatt_scholze_projectivity_of_the_witt_vector_affine_grassmannian} resp.\ \cite[Lemma 3.24]{bhatt2018arc} each $R\in \mathrm{Alg}^{\mathrm{perf}}_k$ has a $v$-cover
  \[
    R\to R^\prime
  \]
  with $R^\prime$ a product of absolutely integrally closed valuation rings (such $R^\prime$ are automatically perfect). Hence, it suffices to check that $\eta_{R}$ is an equivalence for $R$ a product of absolutely integrally closed valuation rings. Using that ultraproducts of absolutely integrally closed valuation rings are again absolutely integrally closed valuation rings, cf.\ \cite[Lemma 3.23]{bhatt2018arc} resp.\ \cite[Lemma 6.2.]{bhatt_scholze_projectivity_of_the_witt_vector_affine_grassmannian} the arguments of \cite[Corollary 3.15]{bhatt2018arc} reduce us the the case that $R$ is an absolutely integrally closed valuation ring. By writing $R$ as a colimit of finitely generated $k$-algebras and using that $F_1,F_2$ commute with filtered colimits, we can then further reduce to the case that $R$ is of finite rank, cf.\ \cite[Lemma 6.4.]{bhatt_scholze_projectivity_of_the_witt_vector_affine_grassmannian} for more details on this reduction. We are using implicitly that the rank of a valuation ring over $k$ does not change in algebraic extensions, e.g., perfections or the passage to an algebraic closure. This finishes the proof.
\end{proof}

We set
\[
  \mathcal{F}_G
\]
as the (2-)functor sending $R\in \mathrm{Alg}^{\mathrm{perf}}_k$ to the groupoid of $G$-torsors on $A(R)[1/\pi]$. 

\begin{lemma}
  \label{sec:v-local-triviality-excision-for-torsors-for-valuation-rings}
  The (2-)functor $\mathcal{F}_G$ is a stack for the arc-topology, in particular for any valuation ring $R\in \mathrm{Alg}^{\mathrm{perf}}_k,$ and any prime ideal $\mathfrak{p}$ with residue field $k(\mathfrak{p})$ the diagram
  \[
    \xymatrix{
      \mathcal{F}_G(R)\ar[r]\ar[d] & \mathcal{F}_G(R/\mathfrak{p}) \ar[d] \\
      \mathcal{F}_G(R_{\mathfrak{p}}) \ar[r] & \mathcal{F}_G(k(\mathfrak{p}))
    }
  \]
  is (homotopy) cartesian.
\end{lemma}
\begin{proof}
  By \cite[Proposition 5.10]{ivanov2020ind} the (2-)functor sending $R\in \mathrm{Alg}_k^{\mathrm{perf}}$ to the category of vector bundles on $\Spec(A(R)[1/\pi])$ is a stack for the arc-topology. In particular, we get the claim for $\mathrm{GL}_n, n\geq 1$. To pass from the case of $\mathrm{GL}_n$ to general $G$ we use the Tannkian description of $G$-torsors on $A(R)[1/\pi])$ as exact tensor functors
  \[
    \mathrm{Rep}_E(G)\to \Bun(A(R)[1/\pi]).
  \]
  Let $R\to R^\prime$ be an arc-cover.
  It suffices to see that a sequence
  \[
    0\to M_1\to M_2\to M_3\to 0
  \]
  of finite projective $A(R)[1/\pi]$ is exact if and only if its base change to $A(R^\prime)[1/\pi]$ is. The ``only if'' direction is clear. Thus, assume exactness of the base change to $A(R^\prime)[1/\pi]$. Then the claim follows as all maximal ideals in $A(R)[1/\pi]$ lie in the image of
  \[
    \Spec(A(R^\prime)[1/\pi])\to \Spec(A(R)[1/\pi]),
  \]
  cf.\ \cite[Corollary 5.6]{ivanov2020ind}. Namely, let $Q$ be the cokernel of $M_2\to M_3$. Then $Q=0$ as it is finitely generated and vanishes at each maximal ideal of $A(R)[1/\pi]$ (because this happens over $A(R^\prime)[1/\pi]$ and the aforementioned surjectivity). In particular, we can write $M_2=M_2^\prime\oplus M_3$ for some finite projective $A(R)[1/\pi]$-module $M_2^\prime$. Then continue the argument with the cokernel $Q^\prime$ of the composition $M_1\to M_2\to M^\prime_2$ etc. This finishes the proof that $\mathcal{F}_G$ is an arc-sheaf.  
      The assertion on valuation rings follows then from \cite[Corollary 2.9]{bhatt2018arc} resp.\ the argument of \cite[Proposition 4.2]{bhatt2018arc}.
\end{proof}

\begin{theorem}
  \label{sec:v-stack-g-2-torsors-trivial-in-v-topology} Let $R$ be a perfect $k$-algebra. For each $G$-torsor $\mathcal{P}$ on $A(R)[1/\pi]$ there exists a $v$-cover $R\to R^\prime$ such that $\mathcal{P}$ is trivial after base change to $A(R^\prime)[1/\pi]$.  
\end{theorem}
\begin{proof}
  By \cite[Lemma 2.12]{bhatt_scholze_projectivity_of_the_witt_vector_affine_grassmannian} each $v$-cover in $\mathrm{Alg}_k^{\mathrm{perf}}$ can be written as a filtered colimit of perfectly finitely presented $v$-covers. These are dominated by the perfection of the composition of a quasi-compact open immersion and a finitely presented proper surjection by \cite[Theorem 2.4]{bhatt_scholze_projectivity_of_the_witt_vector_affine_grassmannian} resp.\ \cite[Theorem 3.12]{rydh2010submersions}, and hence are defined over some finite stage of a filtered colimit. This implies that the $v$-sheafification $F_1$ \footnote{We ignore the set-theoretic issues here. These can be resolved by fixing a sufficiently large cut-off cardinal, cf.\ \cite[Chapter 4]{scholze_etale_cohomology_of_diamonds}.} of the presheaf
  \[
    R\mapsto F_G(R)=H^1(\Spec(A(R)[1/\pi])_\et,G)
  \]
  still commutes with filtered colimits (and finite products), cf.\ \Cref{sec:v-stack-g-3-torsors-on-witt-vectors-commute-with-colimits}.
  In particular, by \Cref{sec:v-stack-g-4-isomorphism-for-v-sheaves} (applied to $F_1$ and the terminal $v$-sheaf $F_2$) it suffices to show the statement for an absolutely integrally closed valuation ring of finite rank $R$. If $R$ is a field, we are done by Steinberg's theorem, cf.\ \Cref{theorem_steinbergs_theorem}, as then $A(R)[1/\pi]$ is a field of cohomological dimension $1$. Thus we may assume that $R$ is not a field. Lifting completions on residue fields, we may assume that for every immediate specialization $\mathfrak{q}\rightsquigarrow \mathfrak{p}$, the valuation ring $R_{\mathfrak{p}}/\mathfrak{q}\subseteq k(\mathfrak{q})$ of rank $1$ is complete. Note that $R=C^+$ is then (a finite rank) open and bounded valuation ring $C^+$ inside a non-archimedean field, algebraically closed field $C=\mathrm{Frac}(R)$. Under all the above assumptions on $R$, we claim that every $G$-torsor on
  \[
    A(R)[1/\pi]
  \]
  is trivial. Indeed, using induction (with the case of rank $1$ settled by \Cref{sec:extend-tors-mathrmsp-1-main-theorem-a-e}) and \Cref{sec:v-local-triviality-excision-for-torsors-for-valuation-rings} it suffices to show that for the height $1$ prime ideal $\mathfrak{p}\subseteq R$ the homotopy pullback
  \[
    \mathcal{F}_G(R_{\mathfrak{p}})\times_{\mathcal{F}_G(k(\mathfrak{p})} \mathcal{F}_G(R/\mathfrak{p})
  \]
  is connected, i.e., this groupoid has one isomorphism class. The induction implies that each of the groupoids
  \[
    \mathcal{F}_G(R_{\mathfrak{p}}), \mathcal{F}_G(k(\mathfrak{p})), \mathcal{F}_G(R/\mathfrak{p})
  \]
  has one isomorphism class.
  The morphism \[
    R_{\mathfrak{p}}\to k(\mathfrak{p})
  \]
  has a section, which implies that the morphism
  \[
    A(R)[1/\pi]\to A(k(\mathfrak{p})[1/\pi]
  \]
  has a section, too (though not as $A(R)[1/\pi])$-algebras, only as $E$-algebras).
  We can conclude that the morphism
  \[
    G(A(R)[1/\pi])\to G(A(k(\mathfrak{p}))[1/\pi])
  \]
  is surjective, which implies the desired connectedness. This finishes the proof. 
\end{proof}

For a scheme $X$ over $E$ let
\[
  LX\colon \mathrm{Alg}^{\mathrm{perf}}_k\to (\Sets),\ R\mapsto X(A(R)[1/\pi])
\]
be its loop functor. If $X$ is quasi-projective, then $LX$ is an arc-sheaf by \cite[Theorem 5.1]{ivanov2020ind}. We can record the following corollary, which generalizes \cite[Corollary 6.4]{ivanov2020ind} to complete generality.

\begin{corollary}
  \label{sec:v-local-triviality-exact-sequence-of-loop-functors}
  Let $P\subseteq G$ be a parabolic subgroup. Then the sequence
  \[
    1\to LP\to LG\to L(G/P)\to 1
  \]
  of arc-sheaves is exact in the $v$-topology.
\end{corollary}
\begin{proof}
  Left-exactness is clear and right exactness follows from \ref{sec:v-local-triviality-exact-sequence-of-loop-functors}. Namely, for any $R\in \mathrm{Alg}^{\mathrm{perf}}_k$ we have an exact sequence
  \[
    1\to LP(R)\to LG(R)\to L(G/P)(R)\to H^1(\Spec(A(R)[1/\pi]
    )_\et,P)
  \]
  of pointed sets and by affineness of $\Spec(A(R)[1/\pi])$
  \[
    H^1(\Spec(A(R)[1/\pi])\et,P)\cong H^1(\Spec(A(R)[1/\pi])_\et,M),
  \]
  where $M=P/\mathrm{Rad}_{u}P$ is the Levi quotient of $P$, cf.\ the proof of \ref{proposition_criterion_via_crystalline_part}. Now, it suffices to apply \ref{sec:v-stack-g-2-torsors-trivial-in-v-topology} to $M$.
\end{proof}

Let us record another corollary from (the proofs of) \Cref{sec:v-stack-g-2-torsors-trivial-in-v-topology} and \Cref{proposition_double_cosets_parahoric}. Recall that $\mathcal{G}/\mathcal{O}_E$ is a parahoric model of $G$.

\begin{corollary}
  \label{sec:v-local-triviality-complete-valuation-ring}
  Let $C$ be a perfect non-archimedean field of characteristic $p$, and let $C^+\subseteq C$ be an open and bounded valuation ring. Let $\varpi\in C$ be a pseudo-uniformizer. Then each $\mathcal{G}$-torsor on $\Spec(A(C^+))\setminus \{\pi,[\varpi]\}$ extends to a $\mathcal{G}$-torsor on $\Spec(A(C^+))$.
\end{corollary}
\begin{proof}
  By \cite[Theorem 2.7]{kedlaya2020some} resp.\ \cite[Proposition 14.2.6]{scholze2020berkeley} the statement holds for $\mathrm{GL}_n, n\geq 1$. Thus, the proof of \Cref{lemma_descent_along_extension_of_c} works for $\mathcal{O}_C$ replaced by $C^+$ again, and we can argue by descent in $(C,C^+)$.  In particular, we may assume that $C$ is algebraically closed. Then $A(C^+)$ is strictly henselian along its maximal ideal.
  The proof of \Cref{proposition_double_cosets_parahoric} via the valuative criterion for properness works for $C^+$ again\footnote{We thank Ian Gleason for pointing this out to us.}. As in \Cref{coro:criterion-extending-torsors-parahoric} we see therefore that we can equivalently prove that each $G$-torsor on $A(C^+)[1/\pi]$ is trivial. By \cite[Theorem 5.8.14]{gabber2003almost} resp.\ \ref{sec:v-stack-g-3-torsors-on-witt-vectors-commute-with-colimits} this statement commutes with filtered colimits in $(C,C^+)$. In particular, we may assume that $C^+$ is of finite rank (and still $C$ algebraically closed).
  Now we argue via descent again, i.e., use the analogs of \Cref{coro:criterion-extending-torsors-parahoric} and \Cref{lemma_descent_along_extension_of_c}, and reduce to the case that for each immediate specialization $\mathfrak{q}\rightsquigarrow \mathfrak{p}$ of the finite rank valuation ring $C^+$ the rank $1$ valuation ring
  \[
    (C^+/\mathfrak{q})_{\mathfrak{p}}\subseteq k(\mathfrak{q})
  \]
  is complete. Then by the proof of \Cref{sec:v-stack-g-2-torsors-trivial-in-v-topology} we see that each $G$-torsor on $A(C^+)[1/\pi]$ is trivial as desired.
 \end{proof}

\section{A specialization map between mixed-characteristic affine Grassmannians}
\label{section_specialization_map}

Let $k$ be an algebraically closed field of char $p>0$ and let $C/W(k)[1/p]$ be an algebraically closed, non-archimedean field with residue field $k^\prime$. After possibly enlarging $k$ we may without losing generality assume $k=k^\prime$.
In this section we want to use \Cref{sec:extend-tors-mathrmsp-1-main-theorem-a-e} to concoct for a parahoric group scheme $\mathcal{G}$ over $W(k)$ a canonical specialization map
$$
\mathrm{sp}\colon \mathrm{Gr}_{\mathcal{G}}^{B_\dR^+}(C)\to \mathrm{Gr}_\mathcal{G}^{W}(k) 
$$
between the mixed characteristic affine Grassmannians $\mathrm{Gr}_{\mathcal{G}}^{B_\dR^+}$ and $\mathrm{Gr}_{\mathcal{G}}^{W}$.
The existence of the specialization map is motivated by results of Richarz \cite[Theorem 1.19]{richarz_affine_grassmannians_and_geometric_satake_equivalences} and the definition of a mixed-characteristic Beilinson-Drinfeld Grassmannian  \cite[Definition 20.3.1.]{scholze2020berkeley}. 
In fact, using \Cref{sec:extend-tors-mathrmsp-1-main-theorem-a-e} it is in fact possible to prove that the mixed-characteristic Beilinson-Drinfeld Grassmannian is ind-proper (cf.\ \cite[Section 21.2]{scholze2020berkeley}).

Let us recall the definition of both affine Grassmannians (we content ourselves with their $k$ resp.\ $C$-valued points.\footnote{For the precise geometric structure as a $v$-sheaf we confer to \cite[Section 20.3]{scholze2020berkeley}.}
We will denote by $C^\flat$ the tilt of $C$.
By definition (cf.\ \cite{zhu_affine_grassmannians_and_the_geometric_satake_in_mixed_characteristic} or \cite{bhatt_scholze_projectivity_of_the_witt_vector_affine_grassmannian}) the $k$-valued points of the Witt vector affine Grassmannian for $\mathcal{G}$ (or better Witt vector affine flag variety) are pairs
$$
(\mathcal{P},\alpha)
$$ 
with $\mathcal{P}$ a $\mathcal{G}$-torsor on $\Spec(W(k))$ and $\alpha$ a trivialization of $\mathcal{P}_{|\Spec(W(k)[1/p])}$.
On the other hand, the $C$-valued points of the $B^+_\dR$-affine Grassmannian (cf.\ \cite[Definition 19.1.1.]{scholze2020berkeley}) are pairs
$$
(\mathcal{P}^\prime,\alpha^\prime)
$$  
with $\mathcal{P}^\prime$ a $\mathcal{G}$-torsor on $\Spec(B^+_{\dR}(C))$ and $\alpha^\prime$ a trivialization of $\mathcal{P}_{|\Spec(B_\dR(C)}$.
We note that, as $B^+_\dR$ contains an algebraic closure $\overline{W(k)[1/p]}$ of $W(k)[1/p]$, a $\mathcal{G}$-torsor over $\Spec(B^+_{\dR}(C))$ is just a torsor under the split reductive geometric generic fiber $\mathcal{G}_{\overline{W(k)[1/p]}}$ of $\mathcal{G}$.
 
Now, let us construct the specialization map
$$
\mathrm{sp}\colon \mathrm{Gr}^{B^+_\dR}_{\mathcal{G}}(C)\to \mathrm{Gr}^W_{\mathcal{G}}(k).
$$
Let $(\mathcal{P}^\prime,\alpha^\prime)\in \mathrm{Gr}^{B^+_\dR}_{\mathcal{G}}(C)$ be given. 
The kernel of Fontaine's map 
$$
\theta\colon A_{\inf}=W({\mathcal{O}_{C^\flat}})\to \mathcal{O}_C
$$
is generated by a non-zero divisor $\xi$. In fact, we may simply take $\xi$ of the form
$$
\xi=p-[\varpi]
$$
for a suitable $\varpi\in \mathfrak{m}_{C^\flat}$.
The $\xi$-adic completion of $A_{\inf}[1/p]$ is by definition Fontaine's ring 
$$
B^+_\dR.
$$
Using the Beauville-Laszlo gluing lemma (cf.\ \cite{beauville_laszlo_un_lemme_de_descente}) and the given data $(\mathcal{P}^\prime,\alpha^\prime)$ we can modify the trivial $\mathcal{G}$-torsor $\mathcal{P}_0$ on
$$
\Spec(A_{\inf})\setminus\{s\}
$$
at the point $\infty\in \Spec(A_{\inf})\setminus\{s\}$ defined by $\xi$ (cf.\ \Cref{lemma_spectrum_of_a_inf}). Thus we obtain canonically a $\mathcal{G}$-torsor $\mathcal{P}_1$ over $\Spec(A_{\inf})\setminus\{s\}$ with an isomorphism 
$$
{\mathcal{P}_1}_{|\Spec(A_{\inf})\setminus\{s,\infty\}}\cong {\mathcal{P}_0}_{|\Spec(A_{\inf})\setminus\{s,\infty\}}.
$$
In particular, the torsor $\mathcal{P}_1$ is trivial when restricted to the crystalline part $U_\crys\subseteq \Spec(A_{\inf})$ (cf.\ \Cref{lemma_spectrum_of_a_inf}). By \Cref{sec:extend-tors-mathrmsp-1-main-theorem-a-e} (or the weaker version \Cref{proposition_criterion_via_crystalline_part}) the $\mathcal{G}$-torsor $\mathcal{P}_1$ extends uniquely to a $\mathcal{G}$-torsor $\mathcal{P}_2$ on $\Spec(A_{\inf})$.
In particular, we still have a canonical trivialization
$$
\mathcal{P}_{2|\Spec(A_{\inf})\setminus\{s,\infty\}}\cong \mathcal{P}_{1|\Spec(A_{\inf})\setminus\{s,\infty\}} \cong \mathcal{P}_{0|\Spec(A_{\inf})\setminus\{s,\infty\}}
$$ 
of $\mathcal{P}_2$ on $\Spec(A_{\inf})\setminus\{s,\infty\}$.
Now set 
$$
\mathcal{P}:={\mathcal{P}_2}_{|\Spec(W(k)}
$$ 
as the restriction of $\mathcal{P}_2$ along the canonical morphism $A_{\inf}\to W(k)$ and $\alpha$ as the canonical trivialization
$$
\mathcal{P}_{|\Spec(W(k)[1/p]}\cong {\mathcal{P}_2}_{|\Spec(W(k)[1/p]}\cong {\mathcal{P}_0}_{|\Spec(W(k)[1/p]}.
$$
The data $(\mathcal{P},\alpha)$ defines a $k$-valued point in the Witt vector affine Grassmannian and we set
$$
\mathrm{sp}(\mathcal{P}^\prime,\alpha^\prime):=(\mathcal{P},\alpha). 
$$
This finishes the construction of $\mathrm{sp}$.
In a more compact form, the specialization map is given as the chain of equivalences and maps
$$
\begin{matrix}
G(B_{\dR}(C))/G(B_{\dR}^+(C))\\
\cong \{(\mathcal{G}-\textrm{torsor } \mathcal{P} \textrm{ on } \Spec(A_\inf)\setminus\{s\},\ \alpha \textrm{ a trivialization of } \mathcal{P}_{|\Spec(A_{\inf}[1/\xi])})\}\\
\cong \{(\mathcal{G}-\textrm{torsor } \mathcal{P} \textrm{ on } \Spec(A_\inf),\ \alpha \textrm{ a trivialization of }\mathcal{P}_{|\Spec(A_{\inf}[1/\xi])})\}\\
\to \{(\mathcal{G}-\textrm{torsor } \mathcal{P} \textrm{ on } \Spec(W(k)),\ \alpha^\prime \textrm{ a trivialization of }\mathcal{P}_{|\Spec(W(k)[1/p])})\}\\
\cong G(W(k)[1/p])/\mathcal{G}(W(k)).  
\end{matrix}
$$
Here the first and last $\cong$'s are the description of the affine Grassmannian via torsors (using Beauville-Laszlo, \Cref{lemma_beauville_laszlo_for_torsors}, for the first), the second equivalence is deduced from \Cref{sec:extend-tors-mathrmsp-1-main-theorem-a-e} (and \Cref{proposition:restricting-flat-schemes-is-fully-faithful}) and the arrow $\to$ is simply base change along $A_\inf\to W(k)$ (which maps the ideal $(\xi)$ to the ideal $(p)$).

Using this description it follows that the specialization map
$$
\mathrm{sp}\colon G(B_{\dR}(C))/G(B_{\dR}^+(C))\to G(W(k)[1/p])/\mathcal{G}(W(k))
$$
is equivariant for the action of the subgroup $G(A_{\inf}[1/\xi])\subseteq G(B_{\dR}(C))$ on the trivialization $\alpha$ of $\mathcal{P}_{|\Spec(A_{\inf}[1/\xi])}$.
For tori we can provide a different description of $\mathrm{sp}$.
Let $\mathcal{T}$ be a parahoric group scheme over $W(k)$ such that 
$$
T:=\mathcal{T}_{\Spec(W(k)[1/p]}
$$ 
is a torus. Then there are canonical bijections
$$
\mathrm{Gr}^{B^+_\dR}_{\mathcal{T}}(C)\cong X_\ast(T)
$$
(by observing that $B^+_\dR(C)$ is abstractly isomorphic to $C[[\xi]]$)
and
$$
\mathrm{Gr}^{W}_{\mathcal{T}}(k)\cong X_\ast(T)_\Gamma
$$
where $\Gamma$ is the absolute Galois group of $W(k)[1/p]$ (cf.\ \cite[Proposition 1.21.]{zhu_affine_grassmannians_and_the_geometric_satake_in_mixed_characteristic}).

\begin{lemma}
\label{lemma_specialization_for_tori}
For $\mathcal{T}$ as above the diagram
$$
\xymatrix{
\mathrm{Gr}^{B^+_\dR}_{\mathcal{T}}(C)\ar[d]^\cong\ar[r]^{\mathrm{sp}} & \mathrm{Gr}^{W}_{\mathcal{T}}(k)\ar[d]^\cong \\
X_\ast(T)\ar[r]^{\mathrm{can}}& X_\ast(T)_\Gamma
}
$$
with $\mathrm{can}\colon X_\ast(T)\to X_\ast(T)_\Gamma$ the canonical projection commutes.  
\end{lemma}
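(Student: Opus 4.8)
The plan is to deduce the statement from its validity for $\Gm$ over $W(k)$ by exploiting that the construction of $\mathrm{sp}$ is functorial in $\mathcal{G}$. First I would record this functoriality: a morphism of tori over $W(k)[1/p]$ induces, by the N\'eron mapping property together with the uniqueness of the parahoric model of a torus, a morphism of the associated parahoric group schemes over $W(k)$; and each ingredient in the definition of $\mathrm{sp}$ --- Beauville--Laszlo glueing, the extension result \Cref{theorem_main_theorem_for_a}, base change along $A_\inf\to W(k)$, and the description of both affine Grassmannians through torsors --- is natural (in particular, the extension of a pushout $\mathcal{P}\times^{\mathcal{G}}\mathcal{G}'$ is the pushout of the extension of $\mathcal{P}$). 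Hence for a morphism $\mathcal{T}_1\to\mathcal{T}_2$ with torus generic fibres the two specialization maps fit into a commutative square, and under the identifications $\mathrm{Gr}^{B_\dR^+}_{\mathcal{T}}(C)\cong X_\ast(T)$ (which sends $\mu$ to the class of $\mu(\xi)$, using that $T$ is split and $\xi$ a uniformiser over $B_\dR^+$) and $\mathrm{Gr}^W_{\mathcal{T}}(k)\cong X_\ast(T)_\Gamma$ (the Kottwitz homomorphism, \cite[Proposition 1.21.]{zhu_affine_grassmannians_and_the_geometric_satake_in_mixed_characteristic}) this square becomes the evident one on cocharacter lattices; the same naturality holds for $\mathrm{can}$.

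Next I would treat the base case $\mathcal{T}=\Gm$ over $W(k)$ by an explicit computation with line bundles. A cocharacter $n\in\Z=X_\ast(\Gm)$ corresponds to $\xi^n\in B_\dR^\times$, and unravelling Beauville--Laszlo the associated $\Gm$-torsor on $U$ is the line bundle obtained by modifying $\mathcal{O}$ at $\infty=(\xi)$, that is (up to the sign one fixes together with the normalisation of the identification $\mathrm{Gr}^{B_\dR^+}_{\Gm}(C)\cong\Z$) the invertible fractional ideal $\xi^{-n}A_\inf\subseteq A_\inf[1/\xi]$. This module is already locally free on all of $\Spec(A_\inf)$, so it is its own extension; it is trivialised by multiplication with $\xi^n$, and comparing this with the tautological trivialisation on $\Spec(A_\inf[1/\xi])$ yields the unit $\xi^n$. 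Base changing along $A_\inf\to W(k)$, under which $\xi\mapsto p$, gives $p^n\in W(k)[1/p]^\times$, whose class in $\mathrm{Gr}^W_{\Gm}(k)=W(k)[1/p]^\times/W(k)^\times\cong\Z$ equals $n$. Thus $\mathrm{sp}_{\Gm}=\Id$, and by the functoriality above $\mathrm{sp}_T=\mathrm{can}$ for every split torus $T$.

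The remaining two reductions are then formal. If $S=\mathrm{Res}_{E'/W(k)[1/p]}\Gm$ is an induced torus, the norm $N\colon S\to\Gm$ induces on cocharacters the augmentation $X_\ast(S)=\Z[\Gamma/\Gamma_{E'}]\to\Z$, which coincides with $\mathrm{can}_S$ and descends to the identity on $\Gamma$-coinvariants; functoriality of $\mathrm{sp}$ applied to $N$, together with the base case, then gives $\mathrm{sp}_S=\mathrm{can}_S$, and likewise for finite products of such $S$. For a general torus $T$, split by some finite (necessarily totally ramified) extension $E'/W(k)[1/p]$, I would realise $X_\ast(T)$ as a quotient of a permutation $\mathrm{Gal}(E'/W(k)[1/p])$-module, i.e.\ choose an induced torus $S$ together with a surjection $q\colon S\twoheadrightarrow T$ for which $q_\ast\colon X_\ast(S)\twoheadrightarrow X_\ast(T)$ is surjective; since $\mathrm{sp}$ and $\mathrm{can}$ are both natural in the torus and coincide on $S$, surjectivity of $q_\ast$ forces $\mathrm{sp}_T=\mathrm{can}_T$, which is the assertion.

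The step I expect to be the main obstacle is the bookkeeping in the first paragraph: one must verify carefully that $\mathrm{sp}$ is genuinely functorial in $\mathcal{T}$ (this rests on the N\'eron-functoriality of parahoric models of tori and on the naturality of the extension theorem and of Beauville--Laszlo) and that, under the identifications of both Grassmannians with cocharacter lattices, a morphism of parahoric tori induces precisely the induced map on $X_\ast$, respectively on $X_\ast(-)_\Gamma$ --- in particular that the identification $\mathrm{Gr}^W_{\mathcal{T}}(k)\cong X_\ast(T)_\Gamma$ is the Kottwitz map, so that it is compatible with $\mathrm{can}$ through the norm --- as well as pinning down once and for all the sign appearing in the base case.
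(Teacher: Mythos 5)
Your proposal is correct, and its skeleton is the one the paper uses: an explicit computation for $\Gm$ (the modified line bundle is already free on all of $\Spec(A_\inf)$ and the trivialization $\xi^{j}$ specializes to $p^{j}$ under $A_\inf\to W(k)$), followed by the induced case, followed by the general case via a surjection from an induced torus together with naturality of $\mathrm{sp}$; the functoriality bookkeeping you flag as the main obstacle is exactly what the paper tacitly invokes as ``naturality of the specialization map'', and it does go through as you say, by the N\'eron mapping property for the connected N\'eron (parahoric) models, the uniqueness/full faithfulness of extensions (\Cref{proposition:restricting-flat-schemes-is-fully-faithful}), \Cref{lemma_beauville_laszlo_for_torsors}, and functoriality of the Kottwitz identification from \cite{zhu_affine_grassmannians_and_the_geometric_satake_in_mixed_characteristic}. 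Where you deviate is in the two reduction steps: for induced tori the paper refers to the argument of \cite{richarz_affine_grassmannians_and_geometric_satake_equivalences} using that $X_\ast(T)_\Gamma$ is torsion-free, whereas you compose with the integral norm $N\colon \mathrm{Res}_{\mathcal{O}_{E^\prime}/W(k)}\Gm\to\Gm$ and use that $N$ induces an isomorphism $X_\ast(S)_\Gamma\xrightarrow{\sim}\Z$ on coinvariants (so equality can be tested after applying $N$); and in the final step the paper gets surjectivity of $\mathrm{Gr}^{B^+_\dR}_{T^\prime}(C)\to\mathrm{Gr}^{B^+_\dR}_{T}(C)$ from Steinberg's theorem (\Cref{theorem_steinbergs_theorem}), while you observe directly that a surjection $q\colon S\twoheadrightarrow T$ with $S$ induced can be chosen with $q_\ast\colon X_\ast(S)\to X_\ast(T)$ surjective, which under the identification of the $B^+_\dR$-Grassmannian with $X_\ast$ is the same surjectivity without appeal to Steinberg. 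Both variants are sound (note that ``connected kernel'' in the paper and ``$q_\ast$ surjective'' in your write-up are equivalent conditions), so your route is marginally more self-contained, at the cost of the extra verification that the Kottwitz identification intertwines $N$ with the augmentation on coinvariants.
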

\begin{proof}
We first handle the case $T=\Gm$ (which implies $\mathcal{T}=\Gm$). Then $X_\ast(T)\cong \Z$ and $j\in \Z$ is mapped to the class of $\xi^j\in \mathrm{Gr}^{B^+_\dR}_{\mathcal{T}}(C)$. This class corresponds to the trivial line bundle $\mathcal{L}$ on $\Spec(A_\inf)\setminus\{s\}$ with trivialization $\xi^j$ on $\Spec(A_\inf[1/\xi])$. The line bundle $\mathcal{L}$ extends canonically to the trivial line bundle, again denoted $\mathcal{L}$, on $\Spec(A_\inf)$. Hence, the specialization map sends $\xi^j$ to the class in $\mathrm{Gr}^{W}_{\mathcal{T}}(k)$ corresponding to the pair 
$$
(W(k)=\mathcal{L}\otimes_{A_\inf}W(k),\bar{\xi}^j\colon W(k)[1/p]\cong W(k)[1/p]).
$$ 
But $\bar{\xi}^j=p^j$, which shows the claim for $T=\Gm$. As in \cite[Lemma 1.21]{richarz_affine_grassmannians_and_geometric_satake_equivalences} we can use this to deal with the case that $T$ is induced by using that $X_\ast(T)_\Gamma$ is torsionfree in such cases.
In the general case, choose a surjection
$$
T^\prime\to T
$$    
with $T^\prime$ induced and connected kernel $T^{\prime\prime}$.
As 
$$
\mathrm{Gr}_{T^\prime}^{B^+_\dR}(C)\twoheadrightarrow \mathrm{Gr}_{T}^{B^+_{\dR}}(C)
$$ 
is surjective (by Steinberg's theorem as $T^{\prime\prime}$ is connected, cf.\ \Cref{theorem_steinbergs_theorem}) the general case follows then from naturality of the specialization map. 
\end{proof}

We want to mention that Gleason studied specialization maps for $v$-sheaves much more thoroughly, cf.\ \cite{gleason2020specialization}.

\bibliography{biblio.bib}
\bibliographystyle{plain}

\end{document}